\theoremstyle{remark}
  \newtheorem{rem}{Remark}
\theoremstyle{plain}
  \newtheorem{lem}{Lemma}
  \newtheorem{thm}{Theorem}
\newcommand*\qedspace{\nobreak\hspace{2em minus 2em}}
\DeclareFontFamily{OT1}{pzc}{}
\DeclareFontShape{OT1}{pzc}{m}{it}{ <-> s * [1.125] pzcmi7t}{}
\DeclareMathAlphabet{\mathpzc}{OT1}{pzc}{m}{it}
\newcommand*\hairspace{\kern 0.08333em}
\newcommand*\thinglue{\nobreak\hspace{.16667em plus .08333em}}
\newcommand*\wc{{\mkern 2mu\cdot\mkern 2mu}}
\let\op\mathcal
\let\form\mathpzc
\let\vect\mathpzc
\providecommand*\mathllap[1]{{}\llap{$\displaystyle#1$}}
\DeclareMathOperator*{\esssup}{ess\,sup}
\let\Im\undefined\DeclareMathOperator{\Im}{Im}
\let\Re\undefined\DeclareMathOperator{\Re}{Re}
\DeclareMathOperator{\Lip}{Lip}
\providecommand{\rbr}[1]{(#1)}
\providecommand{\bigrbr}[1]{\bigl(#1\bigr)}
\providecommand{\biggrbr}[1]{\biggl(#1\biggr)}
\providecommand{\sbr}[1]{[#1]}
\providecommand{\cbr}[1]{\{#1\}}
\providecommand{\bigcbr}[1]{\bigl\{#1\bigr\}}
\providecommand{\Bigcbr}[1]{\Bigl\{#1\Bigr\}}
\providecommand{\abr}[1]{\langle#1\rangle}
\providecommand{\bigabr}[1]{\bigl\langle#1\bigr\rangle}
\providecommand{\abs}[1]{\lvert#1\rvert}
\providecommand{\bigabs}[1]{\bigl\lvert#1\bigr\rvert}
\providecommand{\biggabs}[1]{\biggl\lvert#1\biggr\rvert}
\providecommand{\norm}[1]{\lVert#1\rVert}
\providecommand{\bignorm}[1]{\bigl\lVert#1\bigr\rVert}
\def\B{\mathbf{B}}
\def\M{\mathbf{M}}
\def\S{\mathbf{S}}
\let\set\mathscr
\def\C{\mathbb{C}}
\def\N{\mathbb{N}}
\def\R{\mathbb{R}}
\def\T{\mathbb{T}}
\def\Z{\mathbb{Z}}
\let\Natto\sbr
\title[HOMOGENIZATION FOR OPERATORS ON AN INFINITE CYLINDER]%
      {HOMOGENIZATION FOR NON-SELF-ADJOINT PERIODIC ELLIPTIC OPERATORS
       ON AN INFINITE CYLINDER}%
       \thanks{The~author was supported by SPbSU grant 0.38.237.2014
               and by RFBR grant 14-01-00760.}%
\author{Nikita N. Senik}%
        \thanks{St.~Petersburg State University,
                Universitetskaya nab.~7/9, St.~Petersburg 199034, Russia
                (\protect\raisebox{-1.75pt}{\protect\includegraphics{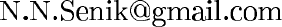}}).%
               }%
\begin{document}

\begin{abstract}
We consider the problem of homogenization for non-self-adjoint second-order
elliptic differential operators~$\mathcal{A}^{\varepsilon}$ of divergence
form on $L_{2}(\mathbb{R}^{d_{1}}\times\mathbb{T}^{d_{2}})$, where
$d_{1}$ is positive and~$d_{2}$ is non-negative. The~coefficients
of the operator~$\mathcal{A}^{\varepsilon}$ are  periodic in the
first variable with period~$\varepsilon$ and smooth in a certain
sense in the second.  We show that, as $\varepsilon$ gets small,
$(\mathcal{A}^{\varepsilon}-\mu)^{-1}$ and~$D_{x_{2}}(\mathcal{A}^{\varepsilon}-\mu)^{-1}$
converge in the operator norm  to, respectively, $(\mathcal{A}^{0}-\mu)^{-1}$
and~$D_{x_{2}}(\mathcal{A}^{0}-\mu)^{-1}$, where $\mathcal{A}^{0}$
is an operator whose coefficients depend only on~$x_{2}$. We also
obtain an approximation for $D_{x_{1}}(\mathcal{A}^{\varepsilon}-\mu)^{-1}$
and find the next term in the approximation for~$(\mathcal{A}^{\varepsilon}-\mu)^{-1}$.
Estimates for the rates of convergence and the rates of approximation
are provided and are sharp with respect to the order.
\end{abstract}

\keywords{%
  homogenization, operator error estimates, periodic differential operators,
  effective operator, corrector%
}

\subjclass[2010]{%
  35B27%
}

\maketitle

\section{Introduction}

The~periodic homogenization problem consists in studying asymptotic
behavior of solutions of  differential equations with rapidly oscillating
coefficients. The~key fact about homogenization is that the solutions
of such problems converge to solutions of problems whose coefficients
no longer oscillate; in applications, this means that we approximate
highly heterogeneous media by a homogenous one. Classical arguments
(as in \cite{BLP}, \cite{BP} or~\cite{ZhKO}) show that the convergence
is weak or strong. In~certain cases, they may even imply  the norm-resolvent
convergence of the corresponding operators  (see~\cite{OShY});
however, the question of whether these operators converge in the norm-resolvent
sense without any additional regularity assumptions remained open
until 2001, when \textsc{Birman} and~\textsc{Suslina~}\cite{BSu1}
(see also~\cite{BSu2}) proved that this is the case for a broad
class of  elliptic problems. Many related results appeared in subsequent
years; see, for example, \cite{Gr1}, \cite{Gr2}, \cite{Zh}, \cite{ZhP},
\cite{Bo}, \cite{KLS} and references therein. In~the~recent paper~\cite{ChC},
a result of this kind was obtained for some of the high-contrast problems.

The~present paper is motivated by the study of homogenization problems
for operators whose coefficients are periodic only in certain directions.
These arise naturally in many applications~-- for instance, in the
theory of waveguides and in elasticity,~-- and were investigated
in \cite{S-HT}, \cite{OShY},  \cite{Su1}, \cite{BCSu} and~\cite{Se1}.

Let $\Xi$ be the cylinder~$\mathbb{R}^{d_{1}}\times\mathbb{T}^{d_{2}}$.
In~\cite{Su1}, \textsc{Suslina} studied the homogenization problem
for elliptic self-adjoint operators~$\op A^{\varepsilon}$ on $\Xi$
of the form 
\[
\op A^{\varepsilon}=D_{1}^{*}A_{11}\rbr{\varepsilon^{-1}x_{1},x_{2}}D_{1}+D_{2}^{*}A_{22}\rbr{\varepsilon^{-1}x_{1},x_{2}}D_{2}.
\]
Here, $A_{11}$ and~$A_{22}$ are periodic in the first variable
and Lipschitz in the second. She proved that $\op A^{\varepsilon}$
converges in the norm-resolvent sense to an operator~$\op A^{0}$,
whose coefficients depend only on the non-periodic variable~$x_{2}$,
and furthermore
\[
\norm{\rbr{\op A^{\varepsilon}+1}^{-1}-\rbr{\op A^{0}+1}^{-1}}_{\B\rbr{L_{2}\rbr{\Xi}}}\le C\varepsilon.
\]
Such problems were further analyzed in~\cite{Se1}, where we extended
that result to self-adjoint operators with lower-order terms and also
obtained an approximation for the resolvent in~$\B\rbr{L_{2}\rbr{\Xi},H^{1}\rbr{\Xi}}$.
(Strictly speaking, the work~\cite{Se1} deals with the case~$d_{1}=d_{2}=1$,
although it is possible to use the techniques of that article to treat
the other cases.) But operators with non-diagonal terms were left
beyond the scope of these papers, and it is our intention here to
fill this gap.

In~this article, we will be concerned with an elliptic non-self-adjoint
operator~$\op A^{\varepsilon}$ on $\Xi$ of the form
\[
\op A^{\varepsilon}=D^{*}A\rbr{\varepsilon^{-1}x_{1},x_{2}}D+D^{*}a_{1}\rbr{\varepsilon^{-1}x_{1},x_{2}}+a_{2}^{*}\rbr{\varepsilon^{-1}x_{1},x_{2}}D+q\rbr{\varepsilon^{-1}x_{1},x_{2}},
\]
where $A$, $a_{1}$, $a_{2}$ and~$q$ are periodic in the first
variable with respect to a lattice in $\R^{d_{1}}$ and have weak
derivatives with respect to the second variable. We further assume
that the coefficients  together with the derivatives belong to certain
spaces of Sobolev multipliers.   We find approximations for $\rbr{\op A^{\varepsilon}-\mu}^{-1}$
and~$D\rbr{\op A^{\varepsilon}-\mu}^{-1}$ in the operator norm and
prove the following estimates:
\begin{gather}
\norm{\rbr{\op A^{\varepsilon}-\mu}^{-1}-\rbr{\op A^{0}-\mu}^{-1}}_{\B\rbr{L_{2}\rbr{\Xi}}}\le C\varepsilon,\label{est: Intro - convergence for A=00207B=0000B9}\\
\norm{D_{2}\rbr{\op A^{\varepsilon}-\mu}^{-1}-D_{2}\rbr{\op A^{0}-\mu}^{-1}}_{\B\rbr{L_{2}\rbr{\Xi}}^{d_{2}}}\le C\varepsilon\label{est: Intro - convergence for D=002082A=00207B=0000B9}
\end{gather}
and
\begin{gather}
\norm{D_{1}\rbr{\op A^{\varepsilon}-\mu}^{-1}-D_{1}\rbr{\op A^{0}-\mu}^{-1}-\varepsilon D_{1}\op K_{\mu}^{\varepsilon}}_{\B\rbr{L_{2}\rbr{\Xi}}^{d_{1}}}\le C\varepsilon,\label{est: Intro - approximation for D=002081A=00207B=0000B9}\\
\norm{\rbr{\op A^{\varepsilon}-\mu}^{-1}-\rbr{\op A^{0}-\mu}^{-1}-\varepsilon\op C_{\mu}^{\varepsilon}}_{\B\rbr{L_{2}\rbr{\Xi}}}\le C\varepsilon^{2}\label{est: Intro - approximation for A=00207B=0000B9}
\end{gather}
(see~the~statements of Theorems~\ref{thm: Convergence}\hairspace--\hairspace\ref{thm: Approximation with 2d corrector}
in Section~\ref{sec: Main results}). Here, $\op A^{0}$ is the effective
operator and $\op K_{\mu}^{\varepsilon}$ and~$\op C_{\mu}^{\varepsilon}$
are correctors. The~effective operator has a form similar to that
of~$\op A^{\varepsilon}$, with coefficients depending only on the
non-periodic variable~$x_{2}$, while the correctors involve rapidly
oscillating functions. The~estimates are sharp with respect to the
order and the constants on the right may be expressed explicitly in
terms of the problem parameters. Some of our results were announced
in~\cite{Se2}.

The~estimates~(\ref{est: Intro - convergence for D=002082A=00207B=0000B9})
and~(\ref{est: Intro - approximation for A=00207B=0000B9}) have
no analogue in \cite{Su1} and~\cite{Se1}. The~former is new and
the latter has appeared for the first time in \cite{BSu3} for certain
self-adjoint operators on the entire space. A~more recent development~\cite{Su4}
has extended that result to self-adjoint operators with lower-order
terms. We also mention here the paper~\cite{P}, where an estimate
similar to (\ref{est: Intro - approximation for A=00207B=0000B9})
was obtained for  operators on $\R^{d}$ with smooth coefficients.
However, all those results apply only to purely periodic operators.

On~the~other hand, we may regard the problem we address here as
a special case of more general  locally periodic homogenization problems
(where the coefficients may depend on both $x$ and~$\varepsilon^{-1}x$).
From~this point of view, estimates of type (\ref{est: Intro - convergence for A=00207B=0000B9})
and~(\ref{est: Intro - approximation for D=002081A=00207B=0000B9})
are known; see, for instance, \cite{PT}, where symmetric operators
with no lower-order terms were treated. In~contrast, the estimate~(\ref{est: Intro - convergence for D=002082A=00207B=0000B9})
is a feature of our problem. As~for~(\ref{est: Intro - approximation for A=00207B=0000B9}),
we believe that the arguments provided here can be used to prove a
similar result for locally periodic operators as well.

 The~operator-theoretic method of \textsc{Birman} and~\textsc{Suslina}
deals only with purely periodic operators and cannot be extended to
locally-periodic ones. Nevertheless,  the abstract results  
they obtained may be adapted, by ad hoc means, to  get the approximations
for operators with $A$ having block-diagonal structure, as shown
in~\cite{Su1} and~\cite{Se1}. However, operators with more general~$A$
do not fit into this framework. So,  if we are to handle these
cases, we must develop a  different approach.

Our program is as follows. We first reduce the problem to a problem
on a fundamental domain for the lattice. This is done by applying
the scaling transformation and the Gelfand transform, both with respect
to the periodic variable. This step is identical to the one in \cite{Su1}
and~\cite{Se1}. The~next step differs significantly. We prove
suitable versions of the resolvent identity (see~(\ref{eq: Identity for U(=0003C4)})
and~(\ref{eq: Identity for V(=0003C4)})).  This enables us to
verify the desired inequalities by elementary means.

Note that the torus~$\mathbb{T}^{d_{2}}$ can be replaced by any
flat manifold without boundary ($\R^{d_{2}}$, for~instance). We
hope that the techniques presented in this article will prove useful
in studying homogenization problems  on domains of type~$\R^{d_{1}}\times\rbr{0,1}^{d_{2}}$
with Dirichlet or Neumann boundary conditions as well.

The~paper is organized as follows. In~Section~\ref{sec: Problem formulation},
we give the necessary background information, introduce the operator~$\op A^{\varepsilon}$,
as well as the effective operator and the correctors,  formulate
the problem under consideration and provide an example of~$\op A^{\varepsilon}$.
Section~\ref{sec: Main results} contains presentation of the main
results. In~Section~\ref{sec: Problem on the fundamental domain},
we deal with the problem on the fundamental domain and prove the results.

\section{\label{sec: Problem formulation}Basic definitions and problem formulation}

\hspace{0em minus 0.5em}We begin with some notation.

\subsection{Preliminaries}

The~symbol~$\norm{\wc}_{U}$ denotes the norm on a normed space~$U$.
Let $U$ and~$V$ be Banach spaces. We use the notation~$\B\rbr{U,V}$
to denote the Banach space of bounded linear operators from $U$ to~$V$.
When~$U=V$, the space~$\B\rbr U=\B\rbr{U,U}$ becomes a Banach
algebra with identity~$\op I$. The~inner product on a pre-Hilbert
space~$U$ is denoted by~$\rbr{\wc,\wc}_{U}$. In~the~finite-dimensional
case~$U=\C^{n}$, the norm and the inner product are denoted by $\abs{\wc}$
and~$\abr{\wc,\wc}$, respectively. We shall identify the spaces~$\B\rbr{\C^{n},\C^{m}}$
 with~$\C^{m\times n}$.

Let $\Sigma$ be a  domain in $\R^{d}$ and $U$ a Banach space.
Then $L_{p}\rbr{\Sigma;U}$, with $1\le p\le\infty$, is the Banach
space of strongly measurable functions~$u\colon\Sigma\to U$ satisfying

\[
\norm u_{L_{p}\rbr{\Sigma;U}}=\biggrbr{\int_{\Sigma}\norm{u\rbr x}_{U}^{p}\,dx}^{1/p}<\infty
\]
if~$p<\infty$ and
\[
\norm u_{L_{\infty}\rbr{\Sigma;U}}=\esssup_{x\in\Sigma}\norm{u\rbr x}_{U}<\infty
\]
if~$p=\infty$. In~case~$U=\C^{n}$, we shall write $\norm{\wc}_{p,\Sigma}$
for the norm on~$L_{p}\rbr{\Sigma}^{n}=L_{p}\rbr{\Sigma;U}$ and~$\rbr{\wc,\wc}_{2,\Sigma}$
for the inner product on~$L_{2}\rbr{\Sigma}^{n}=L_{2}\rbr{\Sigma;U}$.
We denote by $W_{p}^{m}\rbr{\Sigma}$, with $m\in\N$ and~$1\le p\le\infty$,
the Banach space of those measurable functions~$u\colon\Sigma\to\C$
that possess all weak derivatives up to and including order~$m$
and such that

\[
\norm u_{m,p,\Sigma}=\norm u_{W_{p}^{m}\rbr{\Sigma}}=\biggrbr{\sum_{\abs{\alpha}\le m}\norm{D^{\alpha}u}_{L_{p}\rbr{\Sigma}}^{p}}^{1/p}<\infty
\]
if~$p<\infty$ and
\[
\norm u_{m,\infty,\Sigma}=\norm u_{W_{\infty}^{m}\rbr{\Sigma}}=\max_{\abs{\alpha}\le m}\norm{D^{\alpha}u}_{L_{\infty}\rbr{\Sigma}}<\infty
\]
if~$p=\infty$. The~Hilbert space~$W_{2}^{m}\rbr{\Sigma}$ is
denoted by~$H^{m}\rbr{\Sigma}$, and $H^{m}\rbr{\Sigma}^{*}$ is
its dual space under the pairing~$\rbr{\wc,\wc}_{2,\Sigma}$.
If $\Sigma$ is not open, then $W_{p}^{m}\rbr{\Sigma}$ will be understood
to mean the Sobolev space on the interior of~$\Sigma$.

Multipliers between Sobolev spaces are (generalized) functions such
that the corresponding multiplication operators are bounded. Here
we shall be brief; a thorough treatment of Sobolev multipliers may
be found in~\cite{MSh}. Let $\Sigma$ be a Lipschitz domain in $\R^{d}$,
and let $m$ and~$n$ be non-negative integers satisfying~$m\ge n$.
Then $\gamma$ is a Sobolev multiplier between $H^{m}\rbr{\Sigma}$
and~$H^{n}\rbr{\Sigma}$ (written~$\gamma\in\M\rbr{H^{m}\rbr{\Sigma},H^{n}\rbr{\Sigma}}$)
provided that the operator of multiplication~$\gamma\colon H^{m}\rbr{\Sigma}\to H^{n}\rbr{\Sigma}$
is continuous. The~space~$\M\rbr{H^{m}\rbr{\Sigma},H^{n}\rbr{\Sigma}^{*}}$
of Sobolev multipliers between $H^{m}\rbr{\Sigma}$ and the dual of
$H^{n}\rbr{\Sigma}$ is defined in the same way. Notice that an element
of $\M\rbr{H^{m}\rbr{\Sigma},H^{n}\rbr{\Sigma}^{*}}$ is a complex-valued
distribution.

We shall normally write $\norm{\gamma}_{\M}$ for the norm of a Sobolev
multiplier~$\gamma$. This should lead to no confusion, since, once
we discover that $\gamma\in\M\rbr{H^{m}\rbr{\Sigma},H^{n}\rbr{\Sigma}}$
(or $\gamma\in\M\rbr{H^{m}\rbr{\Sigma},H^{n}\rbr{\Sigma}^{*}}$),
we fix the spaces~$H^{m}\rbr{\Sigma}$ and~$H^{n}\rbr{\Sigma}$
(or the spaces~$H^{m}\rbr{\Sigma}$ and~$H^{n}\rbr{\Sigma}^{*}$).

Given a positive $\delta$, the scaling transformation~$\op S^{\delta}$
is defined to be the map that assigns to each measurable function~$u$
on $\Sigma$ the measurable function~$v$ on $\delta^{-1}\Sigma$
given by~$v\rbr y=\delta^{d/2}u\rbr{\delta y}$. Then $\op S^{\delta}$
is  an isomorphism of $H^{m}\rbr{\Sigma}$ onto~$H^{m}\rbr{\delta^{-1}\Sigma}$,
with
\[
\norm{\op S^{\delta}}_{\B\rbr{H^{m}\rbr{\Sigma},H^{m}\rbr{\delta^{-1}\Sigma}}}\le\max\cbr{1,\delta^{m}},
\]
and an isometry provided that~$m=0$. By~duality $\op S^{\delta}$
extends to $H^{m}\rbr{\Sigma}^{*}$, so that $\op S^{\delta}\colon H^{m}\rbr{\Sigma}^{*}\to H^{m}\rbr{\delta^{-1}\Sigma}^{*}$
is also an isomorphism and
\[
\norm{\op S^{\delta}}_{\B\rbr{H^{m}\rbr{\Sigma}^{*},H^{m}\rbr{\delta^{-1}\Sigma}^{*}}}\le\max\cbr{1,\delta^{-m}}.
\]

Let $\cbr{\lambda_{m}}_{m\in\Natto d}$  be a basis of~$\R^{d}$.
Here, $\sbr d$ denotes the set of integers~$\cbr{1,2,\ldots,d}$.
Then the basis generates the lattice
\[
\Lambda=\Bigcbr{\lambda\in\R^{d}\colon\lambda=\sum_{m\in\Natto d}n_{m}\lambda_{m},n_{m}\in\Z}
\]
with~the~basic cell
\[
\Omega=\Bigcbr{x\in\R^{d}\colon x=\sum_{m\in\Natto d}x_{m}\lambda_{m},x_{m}\in[-2^{-1},2^{-1})}.
\]
The~dual lattice~$\Lambda^{*}$ is generated by the basis~$\cbr{\lambda_{m}^{*}}_{m\in\Natto d}$
that is defined by the equations~$\abr{\lambda_{m},\lambda_{n}^{*}}=2\pi\delta_{mn}$.
We denote the Brillouin zone by~$\Omega^{*}$:
\[
\Omega^{*}=\bigcbr{k\in\R^{d}\colon\abs k<\abs{k-\lambda^{*}},\lambda^{*}\in\Lambda^{*}\setminus\cbr 0}.
\]
Notice that the closure of $\Omega^{*}$ is a convex polyhedron containing
the ball of radius~$r_{\Lambda}=2^{-1}\min_{\lambda^{*}\in\Lambda^{*}\setminus\cbr 0}\abs{\lambda^{*}}$
centered at the origin.

Lattices are intimately related to Fourier series. If~$u$ is any
function in $L_{2}\rbr{\Omega}$, then there is a unique representation
\[
u\rbr x=\abs{\Omega}^{-1/2}\sum_{\lambda^{*}\in\Lambda^{*}}\hat{u}_{\lambda^{*}}e^{-i\abr{x,\lambda^{*}}},
\]
where the series converges in~$L_{2}\rbr{\Omega}$. The~corresponding
mapping~$u\mapsto\cbr{\hat{u}_{\lambda^{*}}}_{\lambda^{*}\in\Lambda^{*}}$
is an isometric isomorphism of $L_{2}\rbr{\Omega}$ onto~$l_{2}\rbr{\Lambda^{*}}$.

Let~$\widetilde W_{p}^{m}\rbr{\Omega}$ denote the subspace of $W_{p}^{m}\rbr{\Omega}$
consisting of functions whose periodic extensions are in~$W_{p,\mathrm{loc}}^{m}\rbr{\R^{d}}$.
The~symbol~$\widetilde W_{p,0}^{m}\rbr{\Omega}$ stands for the space of
functions in $\widetilde W_{p}^{m}\rbr{\Omega}$ with zero mean value. We
shall write $\widetilde H^{m}\rbr{\Omega}$ and~$\widetilde H_{0}^{m}\rbr{\Omega}$
for $\widetilde W_{2}^{m}\rbr{\Omega}$ and~$\widetilde W_{2,0}^{m}\rbr{\Omega}$.
Observe that, for each~$k\in\R^{d}$ and any $u\in\widetilde H^{1}\rbr{\Omega}$,
we have
\[
\norm{\rbr{D+k}u}_{2,\Omega}^{2}=\sum_{\lambda^{*}\in\Lambda^{*}}\abs{\lambda^{*}+k}^{2}\abs{\hat{u}_{\lambda^{*}}}^{2},
\]
which yields a variant of Poincar\'{e}'s inequality: 
\begin{equation}
\norm{u-u_{\Omega}}_{2,\Omega}\le C_{\Omega}\norm{\rbr{D+k}u}_{2,\Omega},\label{est: Poincar=0000E9's inequality}
\end{equation}
all~$k\in\Omega^{*}$. Here, $u_{\Omega}=\abs{\Omega}^{-1}\int_{\Omega}u\rbr x\,dx$
and~$C_{\Omega}=r_{\Lambda}^{-1}$.

Another operator that is closely related to lattices is the Gelfand
transform $\op G\colon L_{2}\rbr{\R^{d}}\to L_{2}\rbr{\Omega^{*}\times\Omega}$
given by
\[
\rbr{\op Gu}\rbr{k,x}=\abs{\Omega^{*}}^{-1/2}\sum_{\lambda\in\Lambda}u\rbr{x+\lambda}\,e^{-i\abr{x+\lambda,k}},
\]
the~series converging in~$L_{2}\rbr{\Omega^{*}\times\Omega}$. It~is
well known that $\op G$ is an isometric isomorphism of $L_{2}\rbr{\R^{d}}$
onto $L_{2}\rbr{\Omega^{*}\times\Omega}$ and an isomorphism of $H^{1}\rbr{\R^{d}}$
onto~$L_{2}\rbr{\Omega^{*};\widetilde H^{1}\rbr{\Omega}}$. By~duality,
the Gelfand transform extends to $H^{1}\rbr{\R^{d}}^{*}$, so~$\op G\colon H^{1}\rbr{\R^{d}}^{*}\to L_{2}\rbr{\Omega^{*};\widetilde H^{1}\rbr{\Omega}}^{*}$.

\subsection{Problem formulation}

We fix a positive integer~$d_{1}$ and a non-negative integer~$d_{2}$;
the first will be the number of the periodic directions, and the second
will be the number of the non-periodic directions. We suppose for
specificity that $d_{2}$ is positive; the case~$d_{2}=0$ is similar,
with obvious changes. Let~$d=d_{1}+d_{2}$. Set $\Xi=\R^{d_{1}}\times\T_{L}^{d_{2}}$,
$L>0$, where $\T_{L}^{d_{2}}$ stands for the $d_{2}$\nobreakdash-dimensional
flat torus~$\R^{d_{2}}\slash\rbr{L\Z^{d_{2}}}$; that is, a cube
in $\R^{d_{2}}$ with opposite sides identified. Now for each $x\in\Xi$,
we have $x=\rbr{x_{1},x_{2}}$, where $x_{1}\in\R^{d_{1}}$ and~$x_{2}\in\T_{L}^{d_{2}}$.
The~$m$th coordinate of $x_{1}$ and the $n$th coordinate of
$x_{2}$ are denoted by $x_{1,m}$ and~$x_{2,n}$, respectively.

Let $\Lambda$ be a lattice in $\R^{d_{1}}$ acting on~$\Xi$. If~we
denote a basic cell of $\Lambda$ by $\Omega_{1}$ and the torus~$\T_{L}^{d_{2}}$
by $\Omega_{2}$, then $\Omega=\Omega_{1}\times\Omega_{2}$ is a
fundamental domain for $\Lambda$, and $\cbr{\Omega_{\lambda}}_{\lambda\in\Lambda}$,
where $\Omega_{\lambda}=\lambda+\Omega$, is a tiling of~$\Xi$.

We now introduce a class of allowed coefficients.  Let $U$ and~$V$
be complex Sobolev spaces over the interior of $\Omega$ or subspaces
of such spaces. We define $\S\rbr{U,V}$ to be the set of all complex-valued
generalized functions~$\gamma\in C_{0}^{\infty}\rbr{\Xi}^{*}$ such
that (1)~$\gamma$ is periodic with respect to~$\Lambda$, (2)~$\gamma\in\M\rbr{U,V}$,
and~(3)~$D_{x_{2}}\gamma\in\M\rbr{U,V}^{d_{2}}$. We shall write
$\S\rbr U$ in place of~$\S\rbr{U,U}$.

Let $A$ be a matrix-valued function in $\S\rbr{L_{2}\rbr{\Omega}}^{d\times d}$
with $\Re A$ uniformly positive definite, $a_{1}$ and~$a_{2}$
be vector-valued functions in $\S\rbr{H^{1}\rbr{\Omega},L_{2}\rbr{\Omega}}^{d}$,
and $q$ be a complex-valued distribution in~$\S\rbr{H^{1}\rbr{\Omega},H^{1}\rbr{\Omega}^{*}}$.
Assume also that 
\begin{equation}
\norm{a_{1}}_{\M}+\norm{a_{2}}_{\M}+\norm q_{\M}<\norm{\rbr{\Re A}^{-1}}_{\M}^{-1}.\label{assu: Coercivity}
\end{equation}
This last requirement is  not as restrictive as it might seem to
be. It will turn out that the hypothesis~(\ref{assu: Coercivity})
is, in a sense, a weaker property than the relative $-\Delta$\nobreakdash-form-boundedness
of $a_{1}^{*}D+D^{*}a_{2}+q$ with relative bound zero, so that (\ref{assu: Coercivity})
is satisfied in most cases~-- see Remark~\ref{rem: Coercivity}
below.
\begin{rem}
Our intention is to replace the scale of Lebesgue spaces with that
of multiplier spaces, which prove to be perfectly suited to the problem
in question. In~particular, for this reason we use $\M\rbr{L_{2}\rbr{\Omega}}$
for the space~$L_{\infty}\rbr{\Omega}$.
\end{rem}
Let $iD_{1}$ be the vector of first partial derivatives with respect
to $x_{1}$ and $iD_{2}$, with respect to $x_{2}$. Let $\op D_{1}=\bigrbr{\genfrac{}{}{0pt}{}{D_{1}}{0}}$
and~$\op D_{2}=\bigrbr{\genfrac{}{}{0pt}{}{0}{D_{2}}}$.  We shall
 use $\op D$ to denote~$\op D_{1}+\op D_{2}$. Given $\varepsilon\in\set E=(0,1]$,
we introduce the notation~$\gamma^{\varepsilon}=\rbr{\op S^{1/\varepsilon}\otimes\op I}\gamma\rbr{\op S^{\varepsilon}\otimes\op I}$
for any Sobolev multiplier~$\gamma$ (if $\gamma$ is a function,
then $\gamma^{\varepsilon}\rbr x=\gamma\rbr{\varepsilon^{-1}x_{1},x_{2}}$
for $x\in\Xi$) and define the  form~$\form a^{\varepsilon}$ on
$H^{1}\rbr{\Xi}$ by
\begin{equation}
\form a^{\varepsilon}\sbr u=\rbr{A^{\varepsilon}\op Du,\op Du}_{2,\Xi}+\rbr{\op Du,a_{1}^{\varepsilon}u}_{2,\Xi}+\rbr{a_{2}^{\varepsilon}u,\op Du}_{2,\Xi}+\rbr{q^{\varepsilon}u,u}_{2,\Xi}.\label{def: a=001D4B}
\end{equation}
Notice that $\gamma\mapsto\gamma^{\varepsilon}$ is a bounded map
of $\M\rbr{H^{m}\rbr{\Omega},L_{2}\rbr{\Omega}}$ onto $\M\rbr{H^{m}\rbr{\Omega^{\varepsilon}},L_{2}\rbr{\Omega^{\varepsilon}}}$
and of $\M\rbr{H^{m}\rbr{\Omega},H^{1}\rbr{\Omega}^{*}}$ onto $\M\rbr{H^{m}\rbr{\Omega^{\varepsilon}},H^{1}\rbr{\Omega^{\varepsilon}}^{*}}$,
with norms not exceeding~$1$. Here $\Omega^{\varepsilon}=\varepsilon\Omega_{1}\times\Omega_{2}$.
Then, since $A^{\varepsilon}$, $a_{n}^{\varepsilon}$, $n\in\Natto 2$,
and~$q^{\varepsilon}$ are periodic with respect to $\varepsilon\Lambda$
and since $\cbr{\rbr{\Omega_{\lambda}}^{\varepsilon}}_{\lambda\in\Lambda}$
is a tiling of $\Xi$, we see that $A^{\varepsilon}\in\M\rbr{L_{2}\rbr{\Xi}}^{d\times d}$,
$a_{n}^{\varepsilon}\in\M\rbr{H^{1}\rbr{\Xi},L_{2}\rbr{\Xi}}^{d}$
and~$q^{\varepsilon}\in\M\rbr{H^{1}\rbr{\Xi},H^{1}\rbr{\Xi}^{*}}$.
Furthermore, the corresponding norms are majorized by the multiplier
norms of $A$, $a_{n}$ and~$q$, respectively. Now it is clear
that $\form a^{\varepsilon}$ is bounded,
\begin{equation}
\abs{\form a^{\varepsilon}\sbr{u,v}}\le C_{\flat}\norm u_{1,2,\Xi}\norm v_{1,2,\Xi},\qquad u,v\in H^{1}\rbr{\Xi},\label{est: a=001D4B is bounded}
\end{equation}
where
\[
C_{\flat}=\norm A_{\M}+\norm{a_{1}}_{\M}+\norm{a_{2}}_{\M}+\norm q_{\M}.
\]
Observe also that
\begin{equation}
\Re\form a^{\varepsilon}\sbr u\ge c_{*}\norm{\op Du}_{2,\Xi}^{2}-c_{\natural}\norm u_{2,\Xi}^{2},\qquad u\in H^{1}\rbr{\Xi},\label{est: a=001D4B is coercive}
\end{equation}
where
\begin{align}
c_{*} & =\norm{\rbr{\Re A}^{-1}}_{\M}^{-1}-\norm{a_{1}}_{\M}-\norm{a_{2}}_{\M}-\norm q_{\M},\label{def: c_*}\\
c_{\natural} & =2^{-1}\bigrbr{\norm{a_{1}}_{\M}+\norm{a_{2}}_{\M}}+\norm q_{\M}.\label{def: c_hash}
\end{align}
Since $c_{*}$ is positive, it follows that $\form a^{\varepsilon}$
is coercive.

Thus, $\form a^{\varepsilon}$ is strictly $m$\nobreakdash-sectorial,
with sector
\[
\set S_{1}=\bigcbr{z\in\C\colon\abs{\Im z}\le c_{*}^{-1}C_{\flat}\rbr{\Re z+c_{*}+c_{\natural}}}.
\]
Let $\op A_{\mu}^{\varepsilon}\colon H^{1}\rbr{\Xi}\to H^{1}\rbr{\Xi}^{*}$
be the operator associated with the form~$\form a_{\mu}^{\varepsilon}=\form a^{\varepsilon}-\mu$.
Then $\op A_{\mu}^{\varepsilon}$ is an isomorphism whenever~$\mu\notin\set S_{1}$.
\begin{rem}
\label{rem: Coercivity}The~hypothesis~(\ref{assu: Coercivity})
is needed in order for the form~$\form a^{\varepsilon}$ to be coercive.
In~fact, it can be weakened to allow those $a_{n}$, $n\in\Natto 2$,
and~$q$ that satisfy, for any~$u\in H^{1}\rbr{\Omega}$,
\begin{align}
\norm{a_{n}u}_{2,\Omega}^{2} & \le c_{a_{n}}\norm{Du}_{2,\Omega}^{2}+C_{a_{n}}\norm u_{2,\Omega}^{2},\label{est: a=002099 weakened}\\
\bigabs{\rbr{qu,u}_{2,\Omega}} & \le c_{q}\norm{Du}_{2,\Omega}^{2}+C_{q}\norm u_{2,\Omega}^{2}\label{est: q weakened}
\end{align}
with
\begin{equation}
c_{a_{1}}^{1/2}+c_{a_{2}}^{1/2}+c_{q}<\norm{\rbr{\Re A}^{-1}}_{\M}^{-1}.\label{est: a=002081, a=002082 and q weakened}
\end{equation}
Indeed, since we are interested in estimating operator norms (see
Theorems~\ref{thm: Convergence}\hairspace--\hairspace\ref{thm: Approximation with 2d corrector})
and since $\op S^{\delta}$ is an isomorphism, we may replace $\op A^{\varepsilon}$
by $\hat{\op A}^{\varepsilon}=\op S^{\delta}\op A^{\varepsilon}\rbr{\op S^{\delta}}^{-1}$.
(Here, we realize the torus~$\Omega_{2}=\T_{L}^{d_{2}}$ as the
cube~$\sbr{0,L}^{d_{2}}$ with opposite sides identified, and, in
this sense, $\delta^{-1}\T_{L}^{d_{2}}$ is well defined and equals~$\T_{\delta^{-1}L}^{d_{2}}$.)
 It~is~easy to see that the coefficients of $\hat{\op A}^{1}$
are given by $\hat{A}=\delta^{-2}\op S^{\delta}A\op S^{1/\delta}$,
$\hat{a}_{n}=\delta^{-1}\op S^{\delta}a_{n}\op S^{1/\delta}$ and~$\hat{q}=\op S^{\delta}q\op S^{1/\delta}$.
Therefore, if we take $\delta$ so that $\delta^{2}\le\min\bigcbr{c_{a_{1}}C_{a_{1}}^{-1},c_{a_{2}}C_{a_{2}}^{-1},c_{q}C_{q}^{-1}}$,
then
\[
\begin{aligned}\norm{\hat{a}_{1}}_{\M}+\norm{\hat{a}_{2}}_{\M}+\norm{\hat{q}}_{\M} & \le\delta^{-2}\bigrbr{c_{a_{1}}^{1/2}+c_{a_{2}}^{1/2}+c_{q}}\\
 & <\delta^{-2}\norm{\rbr{\Re A}^{-1}}_{\M}^{-1}=\norm{\rbr{\Re\hat{A}}^{-1}}_{\M}^{-1};
\end{aligned}
\]
that is, the hypothesis~(\ref{assu: Coercivity}) holds for~$\hat{\op A}^{\varepsilon}$.
We note that the class of operators such as $\op A^{\varepsilon}$
here is broad enough to  cover most cases that arise in applications~--
see an example below.
\end{rem}
We are interested in  approximations for $\rbr{\op A_{\mu}^{\varepsilon}}^{-1}$
and~$\op D\rbr{\op A_{\mu}^{\varepsilon}}^{-1}$ in the operator
norm on~$L_{2}\rbr{\Xi}$. In~order to describe these approximations,
we  define the effective operator and two different correctors.

\subsection{Effective operator}

Let $N$ be the weak solution of
\begin{equation}
\op D_{1}^{*}A\rbr{\op D_{1}N+I}=0\label{def: N}
\end{equation}
in~$L_{2}\rbr{\Omega_{2};\widetilde H_{0}^{1}\rbr{\Omega_{1}}}^{1\times d}$,
and $M$ be the weak solution of
\begin{equation}
\op D_{1}^{*}\rbr{A\op D_{1}M+a_{2}}=0\label{def: M}
\end{equation}
in~$L_{2}\rbr{\Omega_{2};\widetilde H_{0}^{1}\rbr{\Omega_{1}}}$. We know
that $N$ and~$M$ exist and are unique, since we may rewrite these
problems as
\begin{equation}
D_{1}^{*}A_{11}D_{1}u=D_{1}^{*}f,\label{eq: Form of the auxiliary problems}
\end{equation}
with an $f$ in $L_{2}\rbr{\Omega}^{d_{1}}$ and $u\in L_{2}\rbr{\Omega_{2};\widetilde H_{0}^{1}\rbr{\Omega_{1}}}$
to be found. Notice in passing that such a $u$  satisfies
\begin{equation}
\bigrbr{A_{11}\rbr{\wc,x_{2}}\,D_{1}u\rbr{\wc,x_{2}}-f\rbr{\wc,x_{2}},D_{1}v}_{2,\Omega_{1}}=0\label{eq: The auxiliary problems on =0003A9=002081}
\end{equation}
for~almost every~$x_{2}\in\Omega_{2}$ and~all~$v\in\widetilde H^{1}\rbr{\Omega_{1}}$.

We now provide some elementary properties of $N$ and~$M$ (cf.~\cite[Proposition~8.2]{Su2}).
\begin{lem}
\label{lem: The auxiliary problem solution as multiplicators}Let
$u$ be the weak solution of the problem~\textup{(\ref{eq: Form of the auxiliary problems})}
where the function~$f$ is in $\M\rbr{H^{m}\rbr{\Omega};L_{2}\rbr{\Omega}}^{d_{1}}$,
$m$ a non-negative integer. Then $D_{1}u\in\M\rbr{H^{m}\rbr{\Omega_{2}},L_{2}\rbr{\Omega}}^{d_{1}}$
and
\[
\norm{D_{1}u}_{\M}\le\abs{\Omega_{1}}^{1/2}\norm{\rbr{\Re A}^{-1}}_{\M}\norm f_{\M}.
\]
If, in addition, $D_{2,n}f\in\M\rbr{H^{m}\rbr{\Omega};L_{2}\rbr{\Omega}}^{d_{1}}$
for some $n\in\Natto{d_{2}}$, then $D_{2,n}D_{1}u\in\M\rbr{H^{m}\rbr{\Omega_{2}},L_{2}\rbr{\Omega}}^{d_{1}}$
and
\[
\norm{D_{2,n}D_{1}u}_{\M}\le\abs{\Omega_{1}}^{1/2}\norm{\rbr{\Re A}^{-1}}_{\M}\bigrbr{\norm{D_{2,n}A}_{\M}\norm{\rbr{\Re A}^{-1}}_{\M}\norm f_{\M}+\norm{D_{2,n}f}_{\M}}.
\]
\end{lem}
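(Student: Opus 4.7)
The plan is to combine the pointwise-in-$x_{2}$ energy estimate afforded by the weak formulation~(\ref{eq: The auxiliary problems on =0003A9=002081}) with the definition of the Sobolev-multiplier norm, testing everything against functions depending on $x_{2}$ only so that each $x_{2}$-slice of the problem is a genuine elliptic equation on~$\Omega_{1}$.

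\textbf{First assertion.} I would fix $\phi\in H^{m}\rbr{\Omega_{2}}$ and view it as a function on $\Omega$ constant in~$x_{1}$, for which $\norm\phi_{m,2,\Omega}=\abs{\Omega_{1}}^{1/2}\norm\phi_{m,2,\Omega_{2}}$. Since such a $\phi$ is annihilated by~$D_{1}$, substituting $v=u(\wc,x_{2})\abs{\phi(x_{2})}^{2}$ into~(\ref{eq: The auxiliary problems on =0003A9=002081}) and using the pointwise bound $\Re A(x)\ge\norm{(\Re A)^{-1}}_{\M}^{-1}I$ (which follows from the fact that the multiplier norm on $L_{2}\rbr\Omega$ coincides with the essential supremum of the operator norm of the matrix) yields
\[
\norm{(\Re A)^{-1}}_{\M}^{-1}\bignorm{(D_{1}u)(\wc,x_{2})\phi(x_{2})}_{2,\Omega_{1}}^{2}\le\Re\bigrbr{f(\wc,x_{2})\phi(x_{2}),(D_{1}u)(\wc,x_{2})\phi(x_{2})}_{2,\Omega_{1}}
\]
for almost every~$x_{2}$. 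Cauchy--Schwarz, squaring and integrating in~$x_{2}$, and finally the bound $\norm{f\phi}_{2,\Omega}\le\norm f_{\M}\norm\phi_{m,2,\Omega}=\abs{\Omega_{1}}^{1/2}\norm f_{\M}\norm\phi_{m,2,\Omega_{2}}$, produce the first estimate.

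\textbf{Second assertion.} I would view $w=D_{2,n}u$ as, for almost every $x_{2}$, the weak solution in $\widetilde H_{0}^{1}\rbr{\Omega_{1}}$ of
\[
D_{1}^{*}A_{11}D_{1}w=D_{1}^{*}\bigrbr{D_{2,n}f-(D_{2,n}A_{11})D_{1}u},
\]
which is again a problem of the form~(\ref{eq: Form of the auxiliary problems}). Granted this, the pointwise a priori estimate from the first step, followed by multiplication by $\phi(x_{2})$ and integration over~$\Omega_{2}$, delivers
\[
\norm{(D_{1}w)\phi}_{2,\Omega}\le\norm{(\Re A)^{-1}}_{\M}\bigrbr{\norm{(D_{2,n}f)\phi}_{2,\Omega}+\norm{D_{2,n}A}_{\M}\norm{(D_{1}u)\phi}_{2,\Omega}}.
\]
Bounding the first right-hand term by $\abs{\Omega_{1}}^{1/2}\norm{D_{2,n}f}_{\M}\norm\phi_{m,2,\Omega_{2}}$ and the second via the first assertion, and recalling that $D_{2,n}D_{1}u=D_{1}w$, gives the claimed estimate.

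\textbf{Main obstacle.} The delicate point is rigorously deriving the equation satisfied by $w=D_{2,n}u$ and verifying that $w\in L_{2}\rbr{\Omega_{2};\widetilde H_{0}^{1}\rbr{\Omega_{1}}}$ under only the hypotheses $D_{2,n}A\in\M\rbr{L_{2}\rbr\Omega}^{d\times d}$ and $D_{2,n}f\in\M\rbr{H^{m}\rbr\Omega,L_{2}\rbr\Omega}^{d_{1}}$. The natural route is a difference-quotient argument in the $x_{2,n}$-direction: setting $\tau_{h}u=h^{-1}\bigrbr{u(\wc,\wc+he_{n})-u(\wc,\wc)}$, one checks that $\tau_{h}u$ solves $D_{1}^{*}A_{11}(\wc,\wc+he_{n})D_{1}\tau_{h}u=D_{1}^{*}\bigrbr{\tau_{h}f-(\tau_{h}A_{11})D_{1}u}$, uses the first assertion to obtain a uniform-in-$h$ multiplier bound on $D_{1}\tau_{h}u$, and passes to the limit $h\to0$ to identify the weak limit with $D_{1}D_{2,n}u$. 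This is modelled on interior elliptic regularity in the $x_{2}$-variable, the only wrinkle being that all estimates are recorded in multiplier rather than Lebesgue norms.
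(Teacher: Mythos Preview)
Your proposal is correct and follows essentially the same route as the paper: testing the weak formulation against $u\abs{\phi}^{2}$ with $\phi$ depending only on $x_{2}$ for the first assertion, and then differentiating the equation in $x_{2,n}$ (justified by Nirenberg's difference-quotient technique) and repeating the argument for the second. The only cosmetic difference is that you work pointwise in $x_{2}$ via~(\ref{eq: The auxiliary problems on =0003A9=002081}) and then integrate, whereas the paper tests directly on $\Omega$ with $v=u\abs{w}^{2}$, $w\in C^{m}\rbr{\Omega_{2}}$; the two are equivalent.
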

\begin{proof}
Let $v=u\abs w^{2}$ with~$w\in C^{m}\rbr{\Omega_{2}}$. Then $v\in L_{2}\rbr{\Omega_{2};\widetilde H_{0}^{1}\rbr{\Omega_{1}}}$,
and we can apply both sides of (\ref{eq: Form of the auxiliary problems})
to $v$, obtaining
\[
\norm{\rbr{D_{1}u}w}_{2,\Omega}\le\norm{\rbr{\Re A_{11}}^{-1}}_{\M}\norm{fw}_{2,\Omega}.
\]
This proves the first assertion.

Suppose now  $D_{2,n}f\in\M\rbr{H^{m}\rbr{\Omega};L_{2}\rbr{\Omega}}^{d_{1}}$.
We know that $D_{2,n}f\in L_{2}\rbr{\Omega}^{d_{1}}$, so $D_{2,n}u$
exists and belongs to $L_{2}\rbr{\Omega_{2};\widetilde H_{0}^{1}\rbr{\Omega_{1}}}$,
which may be verified by using the difference quotient technique
of \textsc{Nirenberg}. Therefore, we can write
\[
D_{1}^{*}A_{11}D_{1}D_{2,n}u=D_{1}^{*}\bigrbr{D_{2,n}f-\rbr{D_{2,n}A_{11}}D_{1}u}.
\]
Applying both sides of the last equality to $v=\rbr{D_{2,n}u}\abs w^{2}$
with $w\in C^{m}\rbr{\Omega_{2}}$ yields
\[
\norm{\rbr{D_{1}D_{2,n}u}w}_{2,\Omega}\le\norm{\rbr{\Re A_{11}}^{-1}}_{\M}\bigrbr{\norm{\rbr{D_{2,n}A_{11}}\rbr{D_{1}u}w}_{2,\Omega}+\norm{\rbr{D_{2,n}f}w}_{2,\Omega}},
\]
and~the second assertion follows.\qedspace
\end{proof}
From~the~above lemma and the Poincar\'{e} inequality~(\ref{est: Poincar=0000E9's inequality}),
we conclude that $N\in\S\rbr{L_{2}\rbr{\Omega_{2}},L_{2}\rbr{\Omega}}^{1\times d}$
and $\op D_{1}N\in\S\rbr{L_{2}\rbr{\Omega_{2}},L_{2}\rbr{\Omega}}^{d\times d}$,
while $M\in\S\rbr{H^{1}\rbr{\Omega_{2}},L_{2}\rbr{\Omega}}$ and
$\op D_{1}M\in\S\rbr{H^{1}\rbr{\Omega_{2}},L_{2}\rbr{\Omega}}^{d}$.

We now turn to the effective coefficients.

Let
\begin{equation}
A^{0}=\abs{\Omega_{1}}^{-1}\int_{\Omega_{1}}A\rbr{\op D_{1}N+I}\,dy_{1}.\label{def: A=002070}
\end{equation}
Then, from the properties of $A$ and~$N$, we have $A^{0}\in\S\rbr{L_{2}\rbr{\Omega_{2}}}^{d\times d}$.
It~is a standard fact (see~\cite[Section~1.6]{ZhKO}) that if $\Re A$
is positive definite, then
\begin{equation}
\Re A^{0}\ge\biggrbr{\abs{\Omega_{1}}^{-1}\int_{\Omega_{1}}\rbr{\Re A}^{-1}dy_{1}}^{-1}.\label{est: Lower bound for ReA=002070}
\end{equation}
This implies that $\Re A^{0}$ is also positive definite and furthermore
$\rbr{\Re A^{0}}^{-1}$ is in $\M\rbr{L_{2}\rbr{\Omega_{2}}}^{d\times d}$
and $\norm{\rbr{\Re A^{0}}^{-1}}_{\M}\le\norm{\rbr{\Re A}^{-1}}_{\M}.$

Next, we define the functions
\begin{align}
a_{1}^{0} & =\abs{\Omega_{1}}^{-1}\int_{\Omega_{1}}\rbr{\op D_{1}N+I}^{*}a_{1}\,dy_{1},\label{def: (a=002081)=002070}\\
a_{2}^{0} & =\abs{\Omega_{1}}^{-1}\int_{\Omega_{1}}\rbr{A\op D_{1}M+a_{2}}\,dy_{1}.\label{def: (a=002082)=002070}
\end{align}
Both of these are in $\S\rbr{H^{1}\rbr{\Omega_{2}},L_{2}\rbr{\Omega_{2}}}^{d}$,
as can be seen from the properties of $A$, $a_{1}$, $a_{2}$ and~$N$,
$M$.

Finally, let $q^{0}$ correspond to the form 
\begin{equation}
\rbr{q^{0}u,u}_{2,\Omega_{2}}=\abs{\Omega_{1}}^{-1}\rbr{qu,u}_{2,\Omega}+\abs{\Omega_{1}}^{-1}\rbr{a_{1}^{*}\op D_{1}Mu,u}_{2,\Omega}\label{def: q=002070}
\end{equation}
on~$H^{1}\rbr{\Omega_{2}}$. By~the~properties of $a_{1}$, $q$
and~$M$, we obtain~$q^{0}\in\S\rbr{H^{1}\rbr{\Omega_{2}},H^{1}\rbr{\Omega_{2}}^{*}}$.
Notice that, in the case when $q$ is a function, we have, as usual,
\[
q^{0}=\abs{\Omega_{1}}^{-1}\int_{\Omega_{1}}q\,dy_{1}+\abs{\Omega_{1}}^{-1}\int_{\Omega_{1}}a_{1}^{*}\op D_{1}M\,dy_{1}.
\]

We are almost ready to define the effective operator. Consider the
form~$\form a^{0}$ on $H^{1}\rbr{\Xi}$ given by
\begin{equation}
\form a^{0}\sbr u=\rbr{A^{0}\op Du,\op Du}_{2,\Xi}+\rbr{\op Du,a_{1}^{0}u}_{2,\Xi}+\rbr{a_{2}^{0}u,\op Du}_{2,\Xi}+\rbr{q^{0}u,u}_{2,\Xi}\label{def: a=002070}
\end{equation}
where
\[
\rbr{q^{0}u,u}_{2,\Xi}=\int_{\R^{d_{1}}}\rbr{q^{0}u\rbr{x_{1},\wc},u\rbr{x_{1},\wc}}_{2,\Omega_{2}}dx_{1}.
\]
Then $\form a^{0}$ is plainly bounded,
\begin{equation}
\abs{\form a^{0}\sbr{u,v}}\le C_{\flat}^{0}\norm u_{1,2,\Xi}\norm v_{1,2,\Xi},\qquad u,v\in H^{1}\rbr{\Xi},\label{est: a=002070 is bounded}
\end{equation}
with
\[
C_{\flat}^{0}=\norm{A^{0}}_{\M}+\norm{a_{1}^{0}}_{\M}+\norm{a_{2}^{0}}_{\M}+\norm{q^{0}}_{\M}.
\]
In~a~moment, we shall see that it is coercive.
\begin{lem}
\label{lem: =0001CE=002070 is coercive}Let $\check{\form a}^{0}$
be the form on $H^{1}\rbr{\Xi}\oplus L_{2}\rbr{\Xi;\widetilde H_{0}^{1}\rbr{\Omega_{1}}}$
given by
\[
\begin{aligned}\check{\form a}^{0}\sbr{\check{u}} & =\smash[b]{\abs{\Omega_{1}}^{-1}\int_{\Xi}\int_{\Omega_{1}}\Bigl(\bigabr{A\rbr{y_{1},x_{2}}\,\check{\op D}\check{u}\rbr{x,y_{1}},\check{\op D}\check{u}\rbr{x,y_{1}}}}\\
 & \hphantom{{}=\abs{\Omega_{1}}^{-1}\int_{\Xi}\int_{\Omega_{1}}\Bigl(\bigabr{A\rbr{y_{1},x_{2}}\,\check{\op D}\check{u}\rbr{x,y_{1}},\check{\op D}\check{u}\rbr{x,y_{1}}}}\mathllap{{}+\bigabr{\check{\op D}\check{u}\rbr{x,y_{1}},a_{1}\rbr{y_{1},x_{2}}\,\check{u}_{1}\rbr x}}\\
 & \hphantom{{}=\abs{\Omega_{1}}^{-1}\int_{\Xi}\int_{\Omega_{1}}\Bigl(\bigabr{A\rbr{y_{1},x_{2}}\,\check{\op D}\check{u}\rbr{x,y_{1}},\check{\op D}\check{u}\rbr{x,y_{1}}}}\mathllap{{}+\bigabr{a_{2}\rbr{y_{1},x_{2}}\,\check{u}_{1}\rbr x,\check{\op D}\check{u}\rbr{x,y_{1}}}}\Bigr)\,dx\,dy_{1}\\
 & \quad+\smash[t]{\abs{\Omega_{1}}^{-1}\int_{\R^{d_{1}}}\rbr{q\check{u}_{1}\rbr{x_{1},\wc},\check{u}_{1}\rbr{x_{1},\wc}}_{2,\Omega}dx_{1}}
\end{aligned}
\]
where $\check{u}=\rbr{\check{u}_{1},\check{u}_{2}}$ and~$\check{\op D}\check{u}\rbr{x,y_{1}}=\op D_{x}\check{u}_{1}\rbr x+\op D_{y_{1}}\check{u}_{2}\rbr{x,y_{1}}$.
Then $\check{\form a}^{0}$ is  coercive and
\begin{equation}
\Re\check{\form a}^{0}\sbr{\check{u}}\ge c_{*}\abs{\Omega_{1}}^{-1}\norm{\check{\op D}\check{u}}_{2,\Xi\times\Omega_{1}}^{2}-c_{\natural}\norm{\check{u}_{1}}_{2,\Xi}^{2},\label{est: =0001CE=002070 is coercive}
\end{equation}
all~$\check{u}\in H^{1}\rbr{\Xi}\oplus L_{2}\rbr{\Xi;\widetilde H_{0}^{1}\rbr{\Omega_{1}}}$.\end{lem}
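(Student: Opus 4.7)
The plan is to mimic the coercivity argument that yielded~(\ref{est: a=001D4B is coercive}) for~$\form a^{\varepsilon}$, carried out slice-by-slice in~$x_1\in\R^{d_1}$ and then integrated in~$x_1$. For~each fixed~$x_1$, the~function $U_1\rbr{y_1,x_2}=\check u_1\rbr{x_1,x_2}$, extended as a constant in~$y_1$, lies in~$H^1\rbr{\Omega}$ with $\norm{U_1}_{1,2,\Omega}=\abs{\Omega_1}^{1/2}\norm{\check u_1\rbr{x_1,\wc}}_{1,2,\Omega_2}$, while $\check u_2\rbr{x_1,x_2,\wc}\in\widetilde H_0^1\rbr{\Omega_1}$ for almost every~$x_2$. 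This~lets me apply the Sobolev-multiplier bounds for $a_1$, $a_2$ and~$q$ on~$\Omega$ pointwise in~$x_1$.

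The~step I~expect to be the principal obstacle is the identity
\[
\abs{\Omega_1}^{-1}\norm{\check{\op D}\check u}_{2,\Xi\times\Omega_1}^{2}=\norm{\op D\check u_1}_{2,\Xi}^{2}+\abs{\Omega_1}^{-1}\norm{\op D_{y_1}\check u_2}_{2,\Xi\times\Omega_1}^{2}\ge\norm{\op D\check u_1}_{2,\Xi}^{2},
\]
which is what accounts for the factor $\abs{\Omega_1}^{-1}$ on the right of~(\ref{est: =0001CE=002070 is coercive}). To~see this, one expands $\bigabs{D_{x_1}\check u_1\rbr x+D_{y_1}\check u_2\rbr{x,y_1}}^{2}$ inside $\norm{\check{\op D}\check u}_{2,\Xi\times\Omega_1}^{2}$ and notes that the cross term $\int_{\Omega_1}\bigabr{D_{x_1}\check u_1\rbr x,D_{y_1}\check u_2\rbr{x,y_1}}\,dy_1$ vanishes by periodicity of~$\check u_2$ in~$y_1$.

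With~this identity in hand, the rest is a near-verbatim replay of the argument behind~(\ref{est: a=001D4B is coercive}). Pointwise coercivity of~$\Re A$ yields the lower bound $\norm{\rbr{\Re A}^{-1}}_{\M}^{-1}\abs{\Omega_1}^{-1}\norm{\check{\op D}\check u}_{2,\Xi\times\Omega_1}^{2}$ for the leading quadratic term. The~$a_1$~cross term is controlled by Cauchy--Schwarz on~$\Omega$, the multiplier bound $\norm{a_1U_1}_{2,\Omega}\le\norm{a_1}_{\M}\norm{U_1}_{1,2,\Omega}$, and a second Cauchy--Schwarz in~$x_1$, which together give $\norm{a_1}_{\M}\abs{\Omega_1}^{-1/2}\norm{\check{\op D}\check u}_{2,\Xi\times\Omega_1}\norm{\check u_1}_{1,2,\Xi}$; Young's inequality and the identity above then upgrade this to $\norm{a_1}_{\M}\bigrbr{\abs{\Omega_1}^{-1}\norm{\check{\op D}\check u}_{2,\Xi\times\Omega_1}^{2}+2^{-1}\norm{\check u_1}_{2,\Xi}^{2}}$. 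The~$a_2$~cross term is handled in exactly the same way, and the $q$-term directly yields $\norm q_{\M}\bigrbr{\abs{\Omega_1}^{-1}\norm{\check{\op D}\check u}_{2,\Xi\times\Omega_1}^{2}+\norm{\check u_1}_{2,\Xi}^{2}}$. Summing the four contributions reproduces the constants $c_{*}$ and~$c_{\natural}$ from~(\ref{def: c_*})\hairspace--\hairspace(\ref{def: c_hash}) and establishes~(\ref{est: =0001CE=002070 is coercive}).
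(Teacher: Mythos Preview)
Your proposal is correct and follows essentially the same route as the paper: the key orthogonality identity $\norm{\check{\op D}\check u}_{2,\Xi\times\Omega_1}^{2}=\abs{\Omega_1}\norm{\op D_x\check u_1}_{2,\Xi}^{2}+\norm{\op D_{y_1}\check u_2}_{2,\Xi\times\Omega_1}^{2}$ (the paper calls it Stokes' theorem), the multiplier bounds on~$\Omega$, and the Cauchy--Schwarz/Young combination are all the same ingredients used in the paper, leading to the same constants $c_*$ and~$c_\natural$. The only cosmetic difference is that you apply the multiplier bounds to the slice functions $U_1(y)=\check u_1(x_1,y_2)$ for each fixed~$x_1$ and then integrate, whereas the paper packages everything into a single auxiliary function $v(y)=\bigl(\int_{\R^{d_1}}\abs{\check u_1(x_1,y_2)}^2\,dx_1\bigr)^{1/2}\in H^1(\Omega)$ and applies the multiplier bound once; both routes are equivalent.
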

\begin{proof}
While the proof is quite similar to that of~(\ref{est: a=001D4B is coercive}),
there is a difference: the variables~$x_{1}$ and~$y_{1}$ in the
definition of $\check{\form a}^{0}$ are ``mixed'', so that we
cannot treat the lower-order terms as before.

We begin with a first-order term. By~Cauchy's inequality, we have
\[
\begin{aligned}\hspace{2em} & \hspace{-2em}\biggabs{\int_{\Xi}\int_{\Omega_{1}}\bigabr{\check{\op D}\check{u}\rbr{x,y_{1}},a_{1}\rbr{y_{1},x_{2}}\,\check{u}_{1}\rbr x}\,dx\,dy_{1}}\\
 & \le\norm{\check{\op D}\check{u}}_{2,\Xi\times\Omega_{1}}\smash[t]{\biggrbr{\int_{\Xi}\int_{\Omega_{1}}\abs{a_{1}\rbr{y_{1},x_{2}}\,\check{u}_{1}\rbr x}^{2}dx\,dy_{1}}^{1/2}}.
\end{aligned}
\]
Let $v$ denote the mapping~$y\mapsto\bigrbr{\int_{\R^{d_{1}}}\abs{\check{u}_{1}\rbr{x_{1},y_{2}}}^{2}dx_{1}}^{1/2}$.
Then $v\in H^{1}\rbr{\Omega}$, with $\norm v_{1,2,\Omega}\le\abs{\Omega_{1}}^{1/2}\norm{\check{u}_{1}}_{1,2,\Xi}$,
and
\[
\int_{\Xi}\int_{\Omega_{1}}\abs{a_{1}\rbr{y_{1},x_{2}}\,\check{u}_{1}\rbr x}^{2}dx\,dy_{1}=\norm{a_{1}v}_{2,\Omega}^{2}.
\]
As~a~result,
\[
\begin{aligned}\hspace{2em} & \hspace{-2em}\biggabs{\int_{\Xi}\int_{\Omega_{1}}\bigabr{\check{\op D}\check{u}\rbr{x,y_{1}},a_{1}\rbr{y_{1},x_{2}}\,\check{u}_{1}\rbr x}\,dx\,dy_{1}}\\
 & \le\abs{\Omega_{1}}^{1/2}\norm{a_{1}}_{\M}\norm{\check{\op D}\check{u}}_{2,\Xi\times\Omega_{1}}\norm{\check{u}_{1}}_{1,2,\Xi}\\
 & \le\norm{a_{1}}_{\M}\bigrbr{\norm{\check{\op D}\check{u}}_{2,\Xi\times\Omega_{1}}^{2}+2^{-1}\abs{\Omega_{1}}\norm{\check{u}_{1}}_{2,\Xi}^{2}}.
\end{aligned}
\]
We have used here the fact that, by Stokes' theorem,
\[
\norm{\check{\op D}\check{u}}_{2,\Xi\times\Omega_{1}}^{2}=\abs{\Omega_{1}}\norm{\op D_{x}\check{u}_{1}}_{2,\Xi}^{2}+\norm{\op D_{y_{1}}\check{u}_{2}}_{2,\Xi\times\Omega_{1}}^{2}.
\]

We may likewise prove that
\[
\begin{aligned}\hspace{2em} & \hspace{-2em}\biggabs{\int_{\Xi}\int_{\Omega_{1}}\bigabr{a_{2}\rbr{y_{1},x_{2}}\,\check{u}_{1}\rbr x,\check{\op D}\check{u}\rbr{x,y_{1}}}\,dx\,dy_{1}}\\
 & \le\norm{a_{2}}_{\M}\bigrbr{\norm{\check{\op D}\check{u}}_{2,\Xi\times\Omega_{1}}^{2}+2^{-1}\abs{\Omega_{1}}\norm{\check{u}_{1}}_{2,\Xi}^{2}}
\end{aligned}
\]
and
\[
\begin{aligned}\hspace{2em} & \hspace{-2em}\biggabs{\int_{\R^{d_{1}}}\rbr{q\check{u}_{1}\rbr{x_{1},\wc},\check{u}_{1}\rbr{x_{1},\wc}}_{2,\Omega}dx_{1}}\\
 & =\bigabs{\rbr{qv,v}_{2,\Omega}}\le\norm q_{\M}\bigrbr{\norm{\check{\op D}\check{u}}_{2,\Xi\times\Omega_{1}}^{2}+\abs{\Omega_{1}}\norm{\check{u}_{1}}_{2,\Xi}^{2}}.
\end{aligned}
\]
Combining these inequalities with
\[
\begin{aligned}\hspace{2em} & \hspace{-2em}\Re\int_{\Xi}\int_{\Omega_{1}}\bigabr{A\rbr{y_{1},x_{2}}\,\check{\op D}\check{u}\rbr{x,y_{1}},\check{\op D}\check{u}\rbr{x,y_{1}}}\,dx\,dy_{1}\\
 & \ge\norm{\rbr{\Re A}^{-1}}_{\M}^{-1}\norm{\check{\op D}\check{u}}_{2,\Xi\times\Omega_{1}}^{2},
\end{aligned}
\]
which is obvious, gives~(\ref{est: =0001CE=002070 is coercive}).\qedspace\end{proof}
\begin{rem}
The~form~$\check{\form a}^{0}$ is associated with the two-scale
homogenized system, first proposed by \textsc{Allaire}~\cite{Al}
in the context of two-scale convergence. See also \cite{LNW} for
a self-contained approach to this matter.
\end{rem}
Now we wish to relate the form~$\check{\form a}^{0}$ to~$\form a^{0}$.
Fix~a~$u\in H^{1}\rbr{\Xi}$. Let $\check{u}_{1}\rbr x=u\rbr x$
and $\check{u}_{2}\rbr{x,y_{1}}=N\rbr{y_{1},x_{2}}\,\op D_{x}u\rbr x+M\rbr{y_{1},x_{2}}\,u\rbr x$.
We claim that $\check{u}$ belongs to~$H^{1}\rbr{\Xi}\oplus L_{2}\rbr{\Xi;\widetilde H_{0}^{1}\rbr{\Omega_{1}}}$.
Indeed, $\check{u}_{2}\rbr{x,y_{1}}$  has a derivative with respect
to $y_{1}$, and, by reasoning explained in the proof of Lemma~\ref{lem: =0001CE=002070 is coercive},
 it lies in $L_{2}\rbr{\Xi\times\Omega_{1}}^{d_{1}}$ (notice here
that $\op D_{1}N$ and~$\op D_{1}M$ are multipliers). Applying identities
for $N$ and~$M$ in the form~(\ref{eq: The auxiliary problems on =0003A9=002081}),
we find that $\check{\form a}^{0}\sbr{\check{u},\check{v}}=0$ for
all~$\check{v}\in\cbr 0\oplus L_{2}\rbr{\Xi;\widetilde H_{0}^{1}\rbr{\Omega_{1}}}$,
and therefore $\check{\form a}^{0}\sbr{\check{u}}=\check{\form a}^{0}\sbr{\check{u},u\oplus0}$.
Now it follows from the definitions of the effective coefficients
that
\[
\check{\form a}^{0}\sbr{\check{u}}=\form a^{0}\sbr u
\]
for~every~$u\in H^{1}\rbr{\Xi}$, which is the desired relation.

Since $\check{\form a}^{0}$ is coercive, the above identity tells
us that so is $\form a^{0}$, with
\begin{equation}
\Re\form a^{0}\sbr u\ge c_{*}\norm{\op Du}_{2,\Xi}^{2}-c_{\natural}\norm u_{2,\Xi}^{2},\qquad u\in H^{1}\rbr{\Xi}.\label{est: a=002070 is coercive}
\end{equation}
Hence, the form~$\form a^{0}$ is strictly $m$\nobreakdash-sectorial,
with sector
\[
\set S_{0}=\bigcbr{z\in\C\colon\abs{\Im z}\le c_{*}^{-1}C_{\flat}^{0}\rbr{\Re z+c_{*}+c_{\natural}}}.
\]
Corresponding to $\form a_{\mu}^{0}=\form a^{0}-\mu$ there is an
 operator~$\op A_{\mu}^{0}=\op A^{0}-\mu\colon H^{1}\rbr{\Xi}\to H^{1}\rbr{\Xi}^{*}$,
which is an isomorphism provided that~$\mu\notin\set S_{0}$.  For~such
a $\mu$, $\rbr{\op A_{\mu}^{0}}^{-1}$ maps $L_{2}\rbr{\Xi}$ onto~$H^{2}\rbr{\Xi}$.
(This can be shown by using the difference quotient technique of \textsc{Nirenberg};
see the proof of Lemma~\ref{lem: Estimates for A=002070(=0003C4)}
for further details on this matter.) We denote the largest of the
sectors~$\set S_{0}$ and~$\set S_{1}$ by~$\set S$.

\subsection{Correctors}

We introduce two types of correctors. The~first, denoted $\op K_{\mu}^{\varepsilon}$,
will be needed  to obtain the approximation  for $\op D_{1}\rbr{\op A_{\mu}^{\varepsilon}}^{-1}$
and is defined as follows. Let $\op P^{\varepsilon}$ be the pseudodifferential
operator in the $x_{1}$\nobreakdash-variable with symbol~$\chi_{\varepsilon^{-1}\Omega_{1}^{*}}$,
where $\chi_{\varepsilon^{-1}\Omega_{1}^{*}}$ is the characteristic
function of the set~$\varepsilon^{-1}\Omega_{1}^{*}$, or, to put
it differently,
\[
\op P^{\varepsilon}=\rbr{\op F\otimes\op I}^{*}\chi_{\varepsilon^{-1}\Omega_{1}^{*}}\rbr{\op F\otimes\op I}.
\]
Here $\op F$ is the Fourier transform in~$L_{2}\rbr{\R^{d_{1}}}$.
Then the corrector~$\op K_{\mu}^{\varepsilon}\colon L_{2}\rbr{\Xi}\to H^{1}\rbr{\Xi}$
for $\op A^{\varepsilon}$ is given by
\begin{equation}
\op K_{\mu}^{\varepsilon}=\rbr{N^{\varepsilon}\op D+M^{\varepsilon}}\rbr{\op A_{\mu}^{0}}^{-1}\op P^{\varepsilon}.\label{def: K=001D4B}
\end{equation}
We remark that,  while $\rbr{N^{\varepsilon}\op D+M^{\varepsilon}}\rbr{\op A_{\mu}^{0}}^{-1}f$,
with $f\in L_{2}\rbr{\Xi}$, is not generally in $H^{1}\rbr{\Xi}$
(not even in~$L_{2}\rbr{\Xi}$), the function~$\op K_{\mu}^{\varepsilon}f$
always is, which may be proved by applying the scaling transformation
and the Gelfand transform (see~(\ref{eq: Direct integral for K=001D4B}))
and then using the properties of $N$ and~$M$ (see Lemma~\ref{lem: Estimates for K(=0003C4)}).
What is more, these calculations  show that $\op K_{\mu}^{\varepsilon}$
is a bounded operator.

The~second corrector, denoted $\op C_{\mu}^{\varepsilon}$, will
be needed for a more subtle result. If~$k$ is a vector in~$\R^{d_{1}}$
and $\vect k$ is the corresponding element of~$\R^{d_{1}}\oplus\cbr 0$,
then we define differential expressions
\begin{align*}
\op S\rbr{k;y_{1}} & =\bigrbr{\rbr{\vect k+\op D_{2}}^{*}A\rbr{y_{1},\wc}+a_{1}^{*}\rbr{y_{1},\wc}}\rbr{\vect k+\op D_{2}}+\rbr{\vect k+\op D_{2}}^{*}a_{2}\rbr{y_{1},\wc}+q\rbr{y_{1},\wc},\\
\op T\rbr{k;y_{1}} & =\bigrbr{\rbr{\vect k+\op D_{2}}^{*}A\rbr{y_{1},\wc}+a_{1}^{*}\rbr{y_{1},\wc}}\op D_{y_{1}}
\end{align*}
and~families of operators
\begin{align*}
\op A_{\mu}^{0}\rbr k & =\rbr{\vect k+\op D_{2}}^{*}A^{0}\rbr{\vect k+\op D_{2}}+\rbr{a_{1}^{0}}^{*}\rbr{\vect k+\op D_{2}}+\rbr{\vect k+\op D_{2}}^{*}a_{2}^{0}+q^{0}-\mu,\\
\op K_{\mu}\rbr{k;y_{1}} & =\bigrbr{N\rbr{y_{1},\wc}\,\rbr{\vect k+\op D_{2}}+M\rbr{y_{1},\wc}}\rbr{\op A_{\mu}^{0}\rbr k}^{-1}.
\end{align*}
Let $\rbr{\op A_{\mu}^{\varepsilon}}^{+}$ be the adjoint of~$\op A_{\mu}^{\varepsilon}$.
For~the operator~$\rbr{\op A_{\mu}^{\varepsilon}}^{+}$, we construct
the effective operator~$\rbr{\op A_{\mu}^{0}}^{+}$ and the corrector~$\rbr{\op K_{\mu}^{\varepsilon}}^{+}$,
as well as the families~$\op A_{\mu}^{0}\rbr k^{+}$ and~$\op K_{\mu}\rbr{k;y_{1}}^{+}$.
(It may be noted in passing that $\rbr{\op A_{\mu}^{0}}^{+}$ is
the adjoint of~$\op A_{\mu}^{0}$.) Finally, let $\op L_{\mu}$ be
the pseudodifferential operator in the $x_{1}$\nobreakdash-variable
with operator-valued symbol~$k\mapsto\op L_{\mu}\rbr k\colon L_{2}\rbr{\Omega_{2}}\to L_{2}\rbr{\Omega_{2}}$
where 
\[
\op L_{\mu}\rbr k=\abs{\Omega_{1}}^{-1}\int_{\Omega_{1}}\bigrbr{\op K_{\mu}\rbr{k;y_{1}}^{+}}^{*}\bigrbr{\op S\rbr{k;y_{1}}\,\rbr{\op A_{\mu}^{0}\rbr k}^{-1}+\op T\rbr{k;y_{1}}\,\op K_{\mu}\rbr{k;y_{1}}}\,dy_{1};
\]
that is,
\[
\op L_{\mu}=\rbr{\op F\otimes\op I}^{*}\op L_{\mu}\rbr{\wc}\rbr{\op F\otimes\op I}.
\]
The~operator~$\op L_{\mu}^{+}$ is constructed similarly. The~corrector~$\op C_{\mu}^{\varepsilon}\colon L_{2}\rbr{\Xi}\to L_{2}\rbr{\Xi}$
is then defined by the formula
\begin{equation}
\op C_{\mu}^{\varepsilon}=\bigrbr{\op K_{\mu}^{\varepsilon}-\op L_{\mu}}+\bigrbr{\rbr{\op K_{\mu}^{\varepsilon}}^{+}-\op L_{\mu}^{+}}^{*}.\label{def: C=001D4B}
\end{equation}
We will see in what follows that $\op C_{\mu}^{\varepsilon}$ is
continuous.
\begin{rem}
Notice that, since $\op A_{\mu}^{0}\rbr{\wc}$ is the symbol of $\op A_{\mu}^{0}$
in the above indicated sense, $\op L_{\mu}$ can be written~as
\[
\op L_{\mu}=\rbr{\op A_{\mu}^{0}}^{-1}\op M\rbr{\op A_{\mu}^{0}}^{-1},
\]
where~$\op M\colon H^{2}\rbr{\Xi}\to H^{1}\rbr{\Xi}^{*}$ is a third-order
differential operator with coefficients depending only on~$x_{2}$.
\end{rem}

We conclude this section with an example of the operator~$\op A^{\varepsilon}$.

\subsection{An example}

Let $d>1$ and~$p>d$. From the~Ehrling lemma,  we know that if
$\gamma\in L_{p}\rbr{\Omega}$, then $\gamma\in\M\rbr{H^{1}\rbr{\Omega},L_{2}\rbr{\Omega}}$
and for all $\epsilon>0$ there is a $C_{\gamma}\rbr{\epsilon}>0$,
depending  on $d$, $p$, $\Omega$ and~$\norm{\gamma}_{p,\Omega}$,
such that 
\begin{equation}
\norm{\gamma u}_{2,\Omega}^{2}\le\epsilon\norm{Du}_{2,\Omega}^{2}+C_{\gamma}\rbr{\epsilon}\norm u_{2,\Omega}^{2},\qquad u\in H^{1}\rbr{\Omega}.\label{est: Gag-Nir on =0003A9}
\end{equation}
As~an~example of a multiplier between $H^{1}\rbr{\Omega}$ and~$H^{1}\rbr{\Omega}^{*}$,
 let $\delta_{\Sigma}$ be the Dirac distribution on a $d-1$ dimensional
Lipschitz surface~$\Sigma$ in $\Omega$ and let $\sigma$ be a function
in~$L_{p-1}\rbr{\Sigma}$. Again, for each $\epsilon>0$ there
is a $C_{\sigma}\rbr{\epsilon}$, depending  on $d$, $p$, $\Omega$,
$\Sigma$ and~$\norm{\sigma}_{p-1,\Sigma}$, such that 
\begin{equation}
\bigabs{\rbr{\sigma\delta_{\Sigma}u,u}_{2,\Omega}}\le\epsilon\norm{Du}_{2,\Omega}^{2}+C_{\sigma}\rbr{\epsilon}\norm u_{2,\Omega}^{2},\qquad u\in H^{1}\rbr{\Omega}.\label{est: Gag-Nir on =0003A3}
\end{equation}

Equipped with this information, we consider a periodic operator on
$L_{2}\rbr{\Xi}$ of the form
\[
\op H^{\varepsilon}=\rbr{\op D-A_{1}^{\varepsilon}}^{*}g^{\varepsilon}\rbr{\op D-A_{2}^{\varepsilon}}+V^{\varepsilon}.
\]
We may think of $\op H^{\varepsilon}$ as a (possibly non-self-adjoint)
periodic Schr\"{o}dinger operator with magnetic and electric potentials
that is associated with metric~$g^{\varepsilon}$. Suppose that $g$
is a periodic function in $\Lip\rbr{\Omega_{2};L_{\infty}\rbr{\Omega_{1}}}^{d\times d}$
and $\Re g$ is uniformly positive definite. Let $A_{1}$ and~$A_{2}$
be periodic functions in $W_{p}^{1}\rbr{\Omega_{2};L_{p}\rbr{\Omega_{1}}}^{d}$.
Finally, let $\Sigma$ be a $d-1$ dimensional periodic Lipschitz
surface in~$\Xi$. Then we assume that $V$ is the sum of a periodic
function~$\hat{V}\in W_{p/2}^{1}\rbr{\Omega_{2};L_{p/2}\rbr{\Omega_{1}}}$
and a distribution~$\sigma\delta_{\Sigma}$ with periodic~$\sigma\in W_{p-1}^{1}\rbr{\Sigma\cap\Omega}$.
Clearly, $\op H^{\varepsilon}$ thus defined can be expressed in the
form
\[
\op H^{\varepsilon}=\op D^{*}A^{\varepsilon}\op D+\rbr{a_{1}^{\varepsilon}}^{*}\op D+\op D^{*}a_{2}^{\varepsilon}+q^{\varepsilon},
\]
where the coefficients satisfy the properties~(\ref{est: a=002099 weakened})\hairspace--\hairspace(\ref{est: a=002081, a=002082 and q weakened})
in Remark~\ref{rem: Coercivity}. So our result applies to~$\op H^{\varepsilon}$.

It~is straightforward to construct an analogous example for the
case~$d=1$. Now we take $\Sigma$ to be a discrete periodic set
of points in $\R$ and assume that $g\in L_{\infty}\rbr{\Omega}$
with $\Re g$ uniformly positive definite, $A_{1},A_{2}\in L_{2}\rbr{\Omega}$
and $V=\hat{V}+\sigma\delta_{\Sigma}$ where $\hat{V}$ lies in $L_{1}\rbr{\Omega}$
and $\sigma$ is a periodic function on~$\Sigma$.

We note that the potential~$V^{\varepsilon}$ may also involve a
singular term~$\varepsilon^{-1}W^{\varepsilon}$ with a suitable
function~$W$. We refer the reader to~\cite[Section~11]{Su3} for
the details.

\section{\label{sec: Main results}Main results}

We now state the principal results of the present paper.
\begin{thm}
\label{thm: Convergence}If~$\mu\notin\set S$, then for any $\varepsilon\in\set E$
we have 
\begin{align}
\norm{\rbr{\op A_{\mu}^{\varepsilon}}^{-1}-\rbr{\op A_{\mu}^{0}}^{-1}}_{\B\rbr{L_{2}\rbr{\Xi}}} & \lesssim\varepsilon,\label{est: Convergence}\\
\norm{\op D_{2}\rbr{\op A_{\mu}^{\varepsilon}}^{-1}-\op D_{2}\rbr{\op A_{\mu}^{0}}^{-1}}_{\B\rbr{L_{2}\rbr{\Xi}}^{d}} & \lesssim\varepsilon.\label{est: Convergence of composition with D2}
\end{align}
The~estimates are sharp with respect to the order, and the constants
depend only on $r_{\Lambda}$, $\mu$ and the multiplier norms of
the coefficients.
\end{thm}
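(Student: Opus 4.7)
The plan is to follow the program sketched in the introduction: first reduce the problem to the fundamental domain, then derive a resolvent identity on each fiber, and finally estimate each term explicitly using the properties of the solutions $N$, $M$ to the cell problems together with a Poincar\'{e}-type inequality in the periodic variable.

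\textbf{Reduction to the fundamental domain.} First I would apply the scaling transformation $\op S^{1/\varepsilon}$ in the $x_1$\nobreakdash-variable to trade $\op A_{\mu}^{\varepsilon}$ for an isospectral operator on $L_{2}(\Omega_{1}^{\varepsilon}\times\R^{d_{1}}\times\T_{L}^{d_{2}})$ whose coefficients are $\Lambda$\nobreakdash-periodic but whose lower-order structure picks up factors of $\varepsilon$. Composing with the Gelfand transform in the same variable then yields a direct-integral decomposition over the Brillouin zone $\varepsilon^{-1}\Omega_{1}^{*}$, or, after rescaling the quasimomentum as $\tau=\varepsilon k\in\Omega_{1}^{*}$, a fiber family $\op A_{\mu}(\tau)$ acting on $L_{2}(\Omega)$ with form domain $\widetilde H^{1}(\Omega_{1})\otimes H^{1}(\Omega_{2})$. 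Exactly the same procedure applied to the effective operator $\op A_{\mu}^{0}$ gives a family $\op A_{\mu}^{0}(\tau)$ related to the $\op A_{\mu}^{0}(k)$ of the corrector section. In these variables the two inequalities become uniform estimates in $\tau\in\Omega_{1}^{*}$ of the form $\|(\op A_{\mu}(\tau))^{-1}-(\op A_{\mu}^{0}(\tau))^{-1}\|_{\B(L_{2}(\Omega))}\lesssim\varepsilon$, and similarly with $\op D_{2}$ (in the rescaled coordinates, $\op D_{1}$ becomes $\varepsilon^{-1}(D_{1}+\tau/\varepsilon)$, but $\op D_{2}$ is unchanged).

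\textbf{Fiberwise resolvent identity.} Next I would establish, for each $\tau\in\Omega_{1}^{*}$ and $f,g\in L_{2}(\Omega)$, an identity of the shape
\[
\bigrbr{\rbr{\op A_{\mu}(\tau)^{-1}-\op A_{\mu}^{0}(\tau)^{-1}}f,g}_{2,\Omega}=\form a_{\mu}(\tau)\sbr{\op A_{\mu}(\tau)^{-1}f-U(\tau)f,\;\op A_{\mu}^{0}(\tau)^{-+}g}+\text{symmetric term},
\]
where $U(\tau)f$ is the two-scale ansatz $\op A_{\mu}^{0}(\tau)^{-1}f$ corrected by $\varepsilon(N(\wc)\,(\bm{\tau}+\op D_{2})+M)\op A_{\mu}^{0}(\tau)^{-1}f$ built from the cell-problem solutions. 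By the defining equations \eqref{def: N}--\eqref{def: M} and the integral identity \eqref{eq: The auxiliary problems on =0003A9=002081}, all contributions from the $y_{1}$\nobreakdash-derivatives in $\form a_{\mu}(\tau)[U(\tau)f,\wc]$ cancel against the effective form $\form a_{\mu}^{0}(\tau)[\op A_{\mu}^{0}(\tau)^{-1}f,\wc]$. What survives is an explicit remainder involving only (i)~the quasimomentum factor $\tau$, which in the Brillouin zone satisfies $|\tau|\le\operatorname{diam}\Omega_{1}^{*}$ and contributes an $\varepsilon$ after the rescaling $\tau=\varepsilon k$, and (ii)~$x_{2}$\nobreakdash-derivatives of $N$ and $M$, which are controlled by Lemma~\ref{lem: The auxiliary problem solution as multiplicators}.

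\textbf{Estimating the remainder.} Each remainder term is bounded by multiplying three factors: an $\varepsilon$ from the rescaling, a multiplier norm of $N$, $M$, $\op D_{1}N$, $\op D_{1}M$ or their $x_{2}$\nobreakdash-derivatives, and a Sobolev norm of $\op A_{\mu}^{0}(\tau)^{-1}f$ that can be absorbed using the coercivity estimate \eqref{est: a=002070 is coercive} and the fact that $\rbr{\op A_{\mu}^{0}}^{-1}\colon L_{2}(\Xi)\to H^{2}(\Xi)$ (stated just after \eqref{est: a=002070 is coercive}). The Poincar\'{e} inequality \eqref{est: Poincar=0000E9's inequality} on $\Omega_{1}$, used on the zero-mean functions produced by the corrector, prevents any loss as $\tau\to0$; for $\tau$ away from $0$ the operator $\op A_{\mu}(\tau)$ is manifestly invertible on $L_{2}(\Omega)$ with norm bounded in $\tau$. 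The inequality \eqref{est: Convergence of composition with D2} is obtained in exactly the same way: $\op D_{2}$ commutes with the Gelfand transform and, since $\op D_{2}\rbr{\op A_{\mu}^{0}}^{-1}$ remains bounded $L_{2}(\Xi)\to H^{1}(\Xi)$, we only need the second half of Lemma~\ref{lem: The auxiliary problem solution as multiplicators} to handle the extra $D_{2,n}$ falling on $N$ or $M$. Pulling back through $\op G$ and $\op S^{\varepsilon}$ preserves the $L_{2}(\Xi)$ operator norm, giving \eqref{est: Convergence} and \eqref{est: Convergence of composition with D2}.

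\textbf{Main obstacle and sharpness.} The principal difficulty I anticipate is the bookkeeping in the fiberwise identity: because $A$ is not block-diagonal, the cross terms $A_{12}$, $A_{21}$ and the lower-order coefficients $a_{1}$, $a_{2}$, $q$ all contribute to the mismatch between $\form a_{\mu}(\tau)$ and $\form a_{\mu}^{0}(\tau)$, and one must verify that every surviving term is either $O(\tau)$ (giving an $\varepsilon$ on rescaling) or contains a derivative of a cell solution with respect to $x_{2}$ (which is again a multiplier by Lemma~\ref{lem: The auxiliary problem solution as multiplicators}). The author's displayed identities \eqref{eq: Identity for U(=0003C4)} and \eqref{eq: Identity for V(=0003C4)} presumably package exactly this cancellation. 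Sharpness with respect to the order follows from standard examples: taking $d_{2}=0$ and smooth purely-$x_{1}$\nobreakdash-periodic coefficients on $\R^{d_{1}}$ reduces the setting to that of \cite{BSu1,BSu3}, where the lower bound $\varepsilon$ is classical, and the same examples can be embedded in our cylinder by letting the coefficients be independent of $x_{2}$.
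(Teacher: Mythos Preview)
Your overall plan matches the paper's: reduce via scaling and the Gelfand transform to a fiberwise problem on $\Omega$, establish a resolvent-type identity with the corrector, and estimate the remainder using Lemmas~\ref{lem: Estimates for A(=0003C4)}--\ref{lem: Estimates for K(=0003C4)} and Poincar\'e. Your parameterization is somewhat garbled (in the paper $\tau=(k,\varepsilon)\in\Omega_1^{*}\times\set E$, not $\tau=\varepsilon k$, and the fiber estimate reads $\|(\op A_\mu(\tau))^{-1}-(\op A_\mu^0(\tau))^{-1}\|\lesssim|\tau|^{-1}$, which becomes $\lesssim\varepsilon$ only after the factor $\varepsilon^2$ coming from~\eqref{eq: Direct integral for A=001D4B}), but this is a bookkeeping issue, and the identity~\eqref{eq: Identity for U(=0003C4)} together with \eqref{est: S(=0003C4)}--\eqref{est: T(=0003C4)} makes your sketch for~\eqref{est: Convergence} go through.

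There is, however, a genuine gap in your treatment of~\eqref{est: Convergence of composition with D2}. It is \emph{not} obtained ``in exactly the same way.'' When you compose the identity~\eqref{eq: Identity for U(=0003C4)} with $\op D_2(\tau)$, the right-hand side still carries $(\op A_\mu(\tau))^{-1}$, and you are effectively testing against $w^{+}=(\op A_\mu(\tau)^{+})^{-1}\op D_2(\tau)^{*}g$. The naive estimate~\eqref{est: (U(=0003C4)f,g)} would then require control of $\|\op D_2(\tau)\op D_1(\tau)w^{+}\|_{2,\Omega}$, i.e.\ a mixed second derivative of the resolvent of the \emph{full} oscillatory operator applied to a distribution of the form $\op D_2(\tau)^{*}g$. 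Lemma~\ref{lem: Estimates for A(=0003C4)} gives no such bound: its last line only controls $\op D(\tau)\op D_2(\tau)(\op A_\mu(\tau))^{-1}$ on $L_2$ data. Your appeal to $\op D_2(\op A_\mu^{0})^{-1}\colon L_2\to H^1$ and to the $x_2$-derivatives of $N$, $M$ covers the effective-operator and corrector pieces, but not this term.

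The paper closes this gap with an additional device: the operator $\op E(\tau)$ (defined via the auxiliary elliptic problem $\op D_2(\tau)^{*}\op D_2(\tau)\psi+|\tau|^{2}\psi=\varphi$) satisfies $(\vect l+\op D_2(\tau))^{*}\op E(\tau)=\op I$ and the bound~\eqref{est: E(=0003C4)}. Writing $\op P_1^{\bot}w^{+}=(\vect l+\op D_2(\tau))^{*}\op E(\tau)\op P_1^{\bot}w^{+}$ and commuting $(\vect l+\op D_2(\tau))$ through $\op S(\tau)$ and $\op T(\tau)$ transfers the extra $\op D_2(\tau)$ onto $u$ and $U$, where it \emph{is} controlled (by Lemma~\ref{lem: Estimates for A=002070(=0003C4)} and the second half of Lemma~\ref{lem: The auxiliary problem solution as multiplicators}, exactly as you anticipated), and leaves only $\|\op D_1(\tau)w^{+}\|_{2,\Omega}$ on the other side, which \emph{is} available from Lemma~\ref{lem: Estimates for A(=0003C4)}. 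Without this integration-by-parts step, your argument for~\eqref{est: Convergence of composition with D2} does not close.
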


\begin{thm}
\label{thm: Approximation with 1st corrector}If~$\mu\notin\set S$,
then for any $\varepsilon\in\set E$ we have 
\begin{equation}
\norm{\op D_{1}\rbr{\op A_{\mu}^{\varepsilon}}^{-1}-\op D_{1}\rbr{\op A_{\mu}^{0}}^{-1}-\varepsilon\op D_{1}\op K_{\mu}^{\varepsilon}}_{\B\rbr{L_{2}\rbr{\Xi}}^{d}}\lesssim\varepsilon.\label{est: Approximation with 1st corrector}
\end{equation}
The~estimate is sharp with respect to the order, and the constant
depends only on $r_{\Lambda}$, $\mu$ and the multiplier norms of
the coefficients.
\end{thm}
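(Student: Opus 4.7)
Following the program announced in the introduction, the approach is to reduce the problem on~$\Xi$ to a family of problems on the fundamental domain~$\Omega$ by applying the scaling~$\op S^{\varepsilon}\otimes\op I$ and then the Gelfand transform in the $x_{1}$\nobreakdash-variable. Under this unitary, the resolvents $(\op A_{\mu}^{\varepsilon})^{-1}$ and $(\op A_{\mu}^{0})^{-1}$ decompose as direct integrals over the quasimomentum~$\tau\in\Omega_{1}^{*}$ of fibre resolvents on~$L_{2}(\Omega)$, associated to operators~$\op A^{\varepsilon}(\tau)$ and~$\op A^{0}(\tau)$ obtained by replacing $\op D_{1}$ with $\op D_{1}+\vect{\tau}$ and by rescaling~$\op D_{2}$ by a factor~$\varepsilon$; the corrector~$\op K_{\mu}^{\varepsilon}$ disintegrates analogously, with the cut-off~$\op P^{\varepsilon}$ becoming the characteristic function of~$\Omega_{1}^{*}$ on the Gelfand side and hence trivial on each fibre. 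It therefore suffices to establish, uniformly in~$\tau$ and~$\varepsilon$, a fibre-wise estimate of the form
\[
\bignorm{(\op D_{1}+\vect{\tau})\bigsbr{(\op A^{\varepsilon}(\tau))^{-1}-(\op A^{0}(\tau))^{-1}-\varepsilon\op K^{\varepsilon}(\tau)}}_{\B(L_{2}(\Omega))^{d}}\lesssim\varepsilon,
\]
and then to reassemble via the direct integral.

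On the fibre, I compare $u^{\varepsilon}=(\op A^{\varepsilon}(\tau))^{-1}f$ with the two-scale ansatz
\[
\tilde{u}=u^{0}+\varepsilon\bigrbr{N(\op D_{1}+\vect{\tau})u^{0}+\varepsilon N\op D_{2}u^{0}+Mu^{0}},
\]
where $u^{0}=(\op A^{0}(\tau))^{-1}f$. A direct computation of $\op A^{\varepsilon}(\tau)\tilde{u}-f$ produces a number of $O(1)$ contributions which cancel thanks to the cell equations~(\ref{def: N}) and~(\ref{def: M}) for~$N$ and~$M$ (used in the form~(\ref{eq: The auxiliary problems on =0003A9=002081})) together with the definitions~(\ref{def: A=002070})\hairspace--\hairspace(\ref{def: q=002070}) of the effective coefficients~$A^{0}$, $a_{1}^{0}$, $a_{2}^{0}$, $q^{0}$. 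What is left is a residual $R_{\varepsilon}(\tau)f$ of order~$\varepsilon$ in $H^{1}(\Omega)^{*}$, bounded by expanding cross-terms and invoking Lemma~\ref{lem: The auxiliary problem solution as multiplicators} to control $N$, $M$, $\op D_{1}N$, $\op D_{1}M$ as Sobolev multipliers acting on~$u^{0}$ and~$\op D_{2}u^{0}$. Inverting the fibre operator then gives the anticipated form of~(\ref{eq: Identity for U(=0003C4)}):
\[
u^{\varepsilon}-\tilde{u}=(\op A^{\varepsilon}(\tau))^{-1}R_{\varepsilon}(\tau)f.
\]

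Now I apply~$\op D_{1}+\vect{\tau}$ to this identity. By direct inspection, $(\op D_{1}+\vect{\tau})\tilde{u}$ equals $(\op D_{1}+\vect{\tau})u^{0}+\varepsilon(\op D_{1}+\vect{\tau})\op K^{\varepsilon}(\tau)f$ up to genuine $O(\varepsilon)$ remainders---namely the terms involving $\varepsilon N\op D_{2}u^{0}$ and the $\varepsilon$\nobreakdash-order commutators of $\op D_{1}$ with~$N$ and~$M$---each controlled by the $H^{2}$\nobreakdash-norm of~$u^{0}$ and by the multiplier bounds from Lemma~\ref{lem: The auxiliary problem solution as multiplicators}. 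The remaining contribution is $(\op D_{1}+\vect{\tau})(\op A^{\varepsilon}(\tau))^{-1}R_{\varepsilon}(\tau)f$; applying the fibre version of the coercivity estimate~(\ref{est: a=001D4B is coercive}) yields $\norm{(\op D_{1}+\vect{\tau})(\op A^{\varepsilon}(\tau))^{-1}}_{\B(H^{1}(\Omega)^{*},L_{2}(\Omega)^{d})}=O(1)$ uniformly in~$\tau\in\Omega_{1}^{*}$ and~$\varepsilon\in\set E$, so that this contribution is~$O(\varepsilon)$ as required.

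The principal technical obstacle is the bookkeeping in the cancellation step: because~$A$ is not assumed block-diagonal and the non-self-adjoint lower-order terms~$a_{1}$, $a_{2}$, $q$ are present, numerous cross-terms appear in $\op A^{\varepsilon}(\tau)\tilde{u}-f$, and each apparently $O(1)$ piece must be absorbed either by the cell equations for~$N$ and~$M$ or by the definitions of the effective coefficients; any missed cancellation would leave an $O(1)$ residual and destroy the estimate. A secondary ingredient is the uniform-in-$\tau$ $H^{2}$\nobreakdash-regularity of~$u^{0}=(\op A^{0}(\tau))^{-1}f$ required so that~$\varepsilon N\op D_{2}u^{0}$ is truly~$O(\varepsilon)$ in~$H^{1}$; this follows from Nirenberg's difference-quotient method in~$x_{2}$ together with the compactness of~$\overline{\Omega_{1}^{*}}$. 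Sharpness with respect to~$\varepsilon$ is automatic, since $\op D_{1}\bigsbr{(\op A_{\mu}^{\varepsilon})^{-1}-(\op A_{\mu}^{0})^{-1}}$ alone is generically only~$O(1)$: the corrector is genuinely needed.
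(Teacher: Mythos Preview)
Your overall plan—reduce to fibres, compare with a two-scale ansatz, bound the residual—is the paper's plan too, but two steps in your sketch do not go through as written.

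First, the statement that ``$\op P^{\varepsilon}$ becomes the characteristic function of~$\Omega_{1}^{*}$ on the Gelfand side and hence trivial on each fibre'' is wrong. After scaling and Gelfand transform one has $\rbr{\op G\op S^{\varepsilon}\otimes\op I}\op P^{\varepsilon}\rbr{\op G\op S^{\varepsilon}\otimes\op I}^{-1}=\int_{\Omega_{1}^{*}}^{\oplus}\op P_{1}\,dk$, where $\op P_{1}$ is the projection onto functions constant in~$x_{1}$. This is not cosmetic: the fibre corrector is $\op K_{\mu}\rbr{\tau}=\rbr{N\op D\rbr{\tau}+\varepsilon M}\rbr{\op A_{\mu}^{0}\rbr{\tau}}^{-1}\op P_{1}$, and without the $\op P_{1}$ the product $N\cdot\op D\rbr{\tau}u^{0}$ is not even in~$L_{2}\rbr{\Omega}$ in general, because $N$ is only a multiplier from $L_{2}\rbr{\Omega_{2}}$ (not from $L_{2}\rbr{\Omega}$) to~$L_{2}\rbr{\Omega}$. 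Consequently one must split $f=\op P_{1}f+\op P_{1}^{\bot}f$; the ansatz handles only the first piece, and the second is estimated separately via Poincar\'{e}'s inequality (this is the decomposition at the start of the proof of Theorem~\ref{thm: Convergence on =0003A9}).

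Second, and more seriously, the step ``residual is $O\rbr{\varepsilon}$ in $H^{1}\rbr{\Omega}^{*}$, and $\rbr{\op D_{1}+\vect\tau}\rbr{\op A^{\varepsilon}\rbr{\tau}}^{-1}$ is $O\rbr{1}$ from $H^{1}\rbr{\Omega}^{*}$'' does not close uniformly. In the paper's normalization the correct fibre target is $\norm{\op D_{1}\rbr{\tau}\op U_{\mu}\rbr{\tau}}\lesssim1$ (Theorem~\ref{thm: Approximation on =0003A9 with 1st corrector}), and the residual in~(\ref{eq: Identity for U(=0003C4)}) has dual norm (with respect to $\norm{\wc}_{1,2,\Omega;\tau}$) only $\lesssim\abs{\tau}^{-1}$, cf.~(\ref{est: (U(=0003C4)f,g)}). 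Testing against $\op D_{1}\rbr{\tau}^{*}g$ produces $w^{+}=\rbr{\op A_{\mu}\rbr{\tau}^{+}}^{-1}\op D_{1}\rbr{\tau}^{*}g$ with $\norm{w^{+}}_{1,2,\Omega;\tau}\lesssim\norm g$, so the crude product gives only $\abs{\tau}^{-1}$, which is $\varepsilon^{-1}$ at $k=0$. The paper recovers the missing factor by exploiting the \emph{structure} of the residual: the $\op P_{1}^{\bot}$ in~(\ref{eq: Identity for U(=0003C4)}) lets one apply Poincar\'{e}, and the auxiliary operator~$\op E\rbr{\tau}$ is introduced precisely to move a derivative across and avoid the mixed second derivatives of~$w^{+}$, yielding~(\ref{est: (S(=0003C4)P=002081u+T(=0003C4)U,(I-P=002081)v=00207A), improved}) and hence the bound~$\lesssim1$. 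Your proposal contains no analogue of this step; a bare coercivity bound on the resolvent cannot substitute for it.
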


\begin{thm}
\label{thm: Approximation with 2d corrector}If~$\mu\notin\set S$,
then for any $\varepsilon\in\set E$ we have
\begin{equation}
\norm{\rbr{\op A_{\mu}^{\varepsilon}}^{-1}-\rbr{\op A_{\mu}^{0}}^{-1}-\varepsilon\op C_{\mu}^{\varepsilon}}_{\B\rbr{L_{2}\rbr{\Xi}}}\lesssim\varepsilon^{2}.\label{est: Approximation with 2d corrector}
\end{equation}
The~estimate is sharp with respect to the order, and the constant
depends only on $r_{\Lambda}$, $\mu$ and the multiplier norms of
the coefficients.\end{thm}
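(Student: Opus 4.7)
The plan is to push the paper's outlined strategy for Theorems~\ref{thm: Convergence}\hairspace--\hairspace\ref{thm: Approximation with 1st corrector} to one higher order in~$\varepsilon$. Applying the scaling transformation~$\op S^{1/\varepsilon}$ and the partial Gelfand transform in the variable~$x_1$ decomposes $\op A_\mu^\varepsilon$ into a direct integral of fiber operators on $\widetilde H^1(\Omega_1)\otimes H^1(\Omega_2)$ parametrised by the quasimomentum, and simultaneously decomposes $\op A_\mu^0$, $\op K_\mu^\varepsilon$ and~$\op L_\mu$ into corresponding fibers; the cut-off~$\op P^\varepsilon$ becomes the identity on each fiber, since the Gelfand transform already confines $x_1$-frequencies to the Brillouin zone. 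Hence the theorem reduces to an operator-norm estimate in~$\B(L_2(\Omega))$ on each fiber, bounded by~$C\varepsilon^{2}$ uniformly in the quasimomentum.

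On each fiber I would apply the second resolvent identity
\[
\bigrbr{\op A_\mu^\varepsilon}^{-1}-\bigrbr{\op A_\mu^0}^{-1}=\bigrbr{\op A_\mu^\varepsilon}^{-1}\bigsbr{\op A_\mu^0-\op A_\mu^\varepsilon}\bigrbr{\op A_\mu^0}^{-1}
\]
twice and symmetrise with the analogous expansion for the adjoint operator, which produces the $(\wc)^{*}$-summand in~\eqref{def: C=001D4B}. The first-order part of the resulting expansion matches the $\op K$-corrector by virtue of the defining relations~\eqref{def: N}\hairspace--\hairspace\eqref{def: M} for $N$ and~$M$; the second-order part matches the $\op L$-corrector by virtue of the construction~\eqref{def: A=002070}\hairspace--\hairspace\eqref{def: q=002070} of the effective coefficients combined with the operator-valued symbol defining~$\op L_\mu$; and the remainder is of order~$\varepsilon^{2}$ in~$\B(L_2(\Omega))$. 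To~justify the remainder bound, I would use the uniform fiber-wise coercivity of $\op A_\mu^\varepsilon$ and $\op A_\mu^0$ (supplied by~\eqref{est: Poincar=0000E9's inequality} and~\eqref{est: a=001D4B is coercive}), the multiplier bounds for $\op D_1 N$, $\op D_1 M$ and their $x_2$-derivatives from Lemma~\ref{lem: The auxiliary problem solution as multiplicators}, and an $H^2$-regularity result for $(\op A_\mu^0)^{-1}$ (obtained by Nirenberg's difference-quotient method) which supplies the extra $x_2$-derivatives needed to absorb the third-order differential symbol $\op S(k;y_1)(\op A_\mu^0(k))^{-1}+\op T(k;y_1)\op K_\mu(k;y_1)$ appearing in~$\op L_\mu$.

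The hardest step will be the algebraic verification that the second-order residue is exactly~$\op L_\mu+(\op L_\mu^+)^{*}$ modulo~$O(\varepsilon^{2})$. Because $A$ is neither block-diagonal nor Hermitian, the symmetrisation arguments available in the self-adjoint case do not apply, and the cancellation must be identified by combining the direct and adjoint expansions and by exploiting the two-scale coercivity identity of Lemma~\ref{lem: =0001CE=002070 is coercive}, which pins the microscopic fields $N$, $M$ to the averaged coefficients. A~secondary technical task is to track Sobolev-multiplier bounds through composition with the third-order differential operator appearing in~$\op L_\mu$, which is where the uniform $H^2$-regularity of the effective fiber resolvent comes in.
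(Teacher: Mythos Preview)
Your overall plan---reduce to fibers and peel off the next order via a resolvent-type identity symmetrised with the adjoint---is the right spirit, but two concrete errors would derail the execution.

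First, after scaling and Gelfand transform the smoothing $\op P^{\varepsilon}$ does \emph{not} become the identity on each fiber: it becomes $\op P_{1}$, the orthogonal projection of $L_{2}(\Omega)$ onto $\C\otimes L_{2}(\Omega_{2})$ (the zero Bloch mode in~$x_{1}$). This projection is essential. The fiber corrector is $\op K_{\mu}(\tau)=(N\op D(\tau)+\varepsilon M)(\op A_{\mu}^{0}(\tau))^{-1}\op P_{1}$, and without $\op P_{1}$ it does not even map $L_{2}$ into $H^{1}$; moreover the $\op P_{1}/\op P_{1}^{\bot}$ splitting is the backbone of the algebraic identities \eqref{eq: Identity for U(=0003C4)} and \eqref{eq: Identity for V(=0003C4)} that the paper derives in place of a bare iterated resolvent identity. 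A direct consequence of your misconception is that you omit the piece $\op L_{\mu}(\op I-\op P^{\varepsilon})$: this does \emph{not} fibre over $\Omega_{1}^{*}$ and is handled in the paper by a separate lemma, estimating the symbol $\op L_{\mu}(k)$ for $k\notin\varepsilon^{-1}\Omega_{1}^{*}$ directly.

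Second, your normalisation is off. The direct-integral relations carry a factor $\varepsilon^{2}$ in front of the resolvent fibers and a factor $\varepsilon$ in front of $\op K_{\mu}(\tau)$ and $\op L_{\mu}(\tau)\op P_{1}$, so the fiber statement to be proved (Theorem~\ref{thm: Approximation on =0003A9 with 2d corrector}) is a uniform $O(1)$ bound, not $O(\varepsilon^{2})$; the $\varepsilon^{2}$ appears only upon reassembly. Finally, the paper does not redo the algebra from scratch at second order: identity~\eqref{eq: Identity for V(=0003C4)} rewrites the remainder $\op V_{\mu}(\tau)$ so that each term contains, as a factor, either $\op U_{\mu}(\tau)^{+}$ or $\op D_{1}(\tau)\op U_{\mu}(\tau)^{+}$ (the adjoint first-order remainders), and these are bounded by invoking the already-proved fiber Theorems~\ref{thm: Convergence on =0003A9} and~\ref{thm: Approximation on =0003A9 with 1st corrector} for the adjoint operator. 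This bootstrapping replaces your proposed second iteration of the resolvent identity and is what makes the ``algebraic verification'' you flag as hardest actually short.
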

\begin{rem}
Although it is possible to write down all the constants explicitly,
we do not do so here. In~particular, we write $\alpha\lesssim\beta$
 to mean $\alpha\le C\beta$  where $C$ is a positive constant
depending only on $r_{\Lambda}$, $\mu$ and the multiplier norms
of the coefficients.
\end{rem}

\begin{rem}
Using the resolvent identity, we can transfer the estimates in Theorems~\ref{thm: Convergence}\hairspace--\hairspace\ref{thm: Approximation with 2d corrector}
to those $\mu\in\set S$ for which $\op A_{\mu}^{\varepsilon}$ (at~least
for each $\varepsilon$ in an interval~$(0,\varepsilon_{\mu}]$)
and~$\op A_{\mu}^{0}$, when viewed as operators on~$L_{2}\rbr{\Xi}$,
have bounded inverses with norms majorized by constants independent
of~$\varepsilon$. For~instance, the estimates hold if $\mu$ is
in the resolvent set of the effective operator, but in this case we
have no control over~$\varepsilon_{\mu}$.
\end{rem}

\begin{rem}
The~hypothesis that the coefficients have weak derivatives with respect
to the non-periodic variable is crucial to our analysis and reflects
the fact that the roles of the two variables are quite different.
Roughly speaking, only the first variable is involved in the homogenization
procedure, while the second plays the role of a parameter (see,
for example, the definitions of $N$ and~$M$, where this is literally
the case). In~particular, the hypothesis ensures that $N$ and~$M$
belong to $H^{1}\rbr{\Xi}$ and that the pre-image of $L_{2}\rbr{\Xi}$
under $\op A_{\mu}^{0}$ is $H^{2}\rbr{\Xi}$; as a consequence, the
range of $\op K_{\mu}^{\varepsilon}$ is contained in~$H^{1}\rbr{\Xi}$.
\end{rem}

\begin{rem}
While $\rbr{\op A_{\mu}^{\varepsilon}}^{-1}$ and~$\op D_{2}\rbr{\op A_{\mu}^{\varepsilon}}^{-1}$
have limits, the operator~$\op D_{1}\rbr{\op A_{\mu}^{\varepsilon}}^{-1}$
can fail to converge, because, though the norm of $\varepsilon\op D_{1}\op K_{\mu}^{\varepsilon}$
is bounded uniformly in~$\varepsilon$, it need not go to zero.
However, if, for example, $\op D_{1}^{*}A=0$ and~$\op D_{1}^{*}a_{2}=0$
(in the weak sense), then $\op K_{\mu}^{\varepsilon}=0$, and $\op D_{1}\rbr{\op A_{\mu}^{\varepsilon}}^{-1}$
is therefore convergent as well. Notice that, in this case, the effective
coefficients are obtained by simply taking the  mean over~$\Omega_{1}$.

\end{rem}

\begin{rem}
We may replace $\op P^{\varepsilon}$ with another smoothing. For~instance,
the Steklov averaging operator (see~\cite{Zh}) or the scale-splitting
operator (see~\cite{Gr1}) can be used instead. This follows from
the inequalities
\begin{align}
\norm{\rbr{\op D_{1}N}w}_{2,\Xi}^{2} & \lesssim\norm{N\otimes\op D_{1}w}_{2,\Xi}^{2}+\norm w_{2,\Xi}^{2},\label{est: (D1 N) as a multiplier on Xi}\\
\norm{\rbr{\op D_{1}M}w}_{2,\Xi}^{2} & \lesssim\norm{M\op D_{1}w}_{2,\Xi}^{2}+\norm w_{1,2,\Xi}^{2},\label{est: (D1 M) as a multiplier on Xi}
\end{align}
which hold for any $w\in C_{0}^{\infty}\rbr{\Xi}$ (the~proof of
the inequalities is parallel to that of Lemma~\ref{lem: The auxiliary problem solution as multiplicators}),
and  properties of the smoothing operators (cf.~similar techniques
in \cite[Lemma~3.5]{PSu}). The~reason why we chose $\op P^{\varepsilon}$
is merely one of convenience: as we shall see below, $\op P^{\varepsilon}$
takes a rather simple form after passing to the fundamental domain.
\end{rem}

\begin{rem}
As~already indicated, the operator~$\op P^{\varepsilon}$ guarantees
that the range of $\op K_{\mu}^{\varepsilon}$ is contained in~$H^{1}\rbr{\Xi}$.
This means that, in general, it is not possible to  remove~$\op P^{\varepsilon}$.
However, this can be done in certain cases. For~example, if
$N\in\M\rbr{L_{2}\rbr{\Omega}}^{1\times d}$ and~$M\in\M\rbr{H^{1}\rbr{\Omega},L_{2}\rbr{\Omega}}$,
then the classical corrector
\[
\hat{\op K}_{\mu}^{\varepsilon}=\rbr{N^{\varepsilon}\op D+M^{\varepsilon}}\rbr{\op A_{\mu}^{0}}^{-1}
\]
as well as the composition~$\op D_{1}\hat{\op K}_{\mu}^{\varepsilon}$
are bounded on $L_{2}\rbr{\Xi}$ (by~(\ref{est: (D1 N) as a multiplier on Xi})
and~(\ref{est: (D1 M) as a multiplier on Xi})), and the estimates~(\ref{est: Approximation with 1st corrector})
and~(\ref{est: Approximation with 2d corrector}) remain true with
$\hat{\op K}_{\mu}^{\varepsilon}$ in place of $\op K_{\mu}^{\varepsilon}$
and
\[
\hat{\op C}_{\mu}^{\varepsilon}=\bigrbr{\hat{\op K}_{\mu}^{\varepsilon}-\op L_{\mu}}+\bigrbr{\rbr{\hat{\op K}_{\mu}^{\varepsilon}}^{+}-\op L_{\mu}^{+}}^{*}
\]
in~place of~$\op C_{\mu}^{\varepsilon}$.
\end{rem}

\section{\label{sec: Problem on the fundamental domain}Problem on the fundamental
domain}

Our strategy is to reduce $\op A_{\mu}^{\varepsilon}$ to an operator
on the fundamental domain~$\Omega$ and then formulate Theorems~\ref{thm: Convergence}\hairspace--\hairspace\ref{thm: Approximation with 2d corrector}
in terms of this latter operator.

Let~$\tau=\rbr{k,\varepsilon}\in\set T=\Omega_{1}^{*}\times\set E$.
We introduce the notation~$\op D_{1}\rbr{\tau}=\op D_{1}+\vect k$,
$\op D_{2}\rbr{\tau}=\varepsilon\op D_{2}$ and~$\op D\rbr{\tau}=\op D_{1}\rbr{\tau}+\op D_{2}\rbr{\tau}$
and set
\[
\norm u_{1,2,\Omega;\tau}=\bigrbr{\norm{\op D\rbr{\tau}u}_{2,\Omega}^{2}+\abs{\tau}^{2}\norm u_{2,\Omega}^{2}}^{1/2}
\]
and
\begin{align*}
\norm u_{1_{1},2,\Omega;\tau} & =\bigrbr{\norm{\op D_{1}\rbr{\tau}u}_{2,\Omega}^{2}+\abs{\tau}^{2}\norm u_{2,\Omega}^{2}}^{1/2},\\
\norm u_{1_{2},2,\Omega;\tau} & =\bigrbr{\norm{\op D_{2}\rbr{\tau}u}_{2,\Omega}^{2}+\abs{\tau}^{2}\norm u_{2,\Omega}^{2}}^{1/2}
\end{align*}
for~any $u$ for which the right-hand sides make sense.

Now let us define periodic Sobolev spaces over the interior of~$\Omega$.
Recall that we view $\Lambda$ as acting on $\Xi$ and $\Omega\subset\R^{d}$
is a fundamental domain for~$\Lambda$. The~space~$\widetilde W_{p}^{m}\rbr{\Omega}$
consists of all functions that are in $W_{p}^{m}\rbr{\Omega}$ and
that have periodic extensions in $W_{p}^{m}\rbr K$ for each compact
set~$K\subset\Xi$. Let $\widetilde W_{p,0}^{m}\rbr{\Omega}$ be the subspace
of functions in $\widetilde W_{p}^{m}\rbr{\Omega}$ with zero mean. As~usual,
$\widetilde H^{m}\rbr{\Omega}=\widetilde W_{2}^{m}\rbr{\Omega}$ and~$\widetilde H_{0}^{m}\rbr{\Omega}=\widetilde W_{2,0}^{m}\rbr{\Omega}$.

We define the  form~$\form a\rbr{\tau}$ on $\widetilde H^{1}\rbr{\Omega}$
by
\begin{equation}
\begin{aligned}\form a\rbr{\tau}\sbr u & =\rbr{A\op D\rbr{\tau}u,\op D\rbr{\tau}u}_{2,\Omega}+\varepsilon\rbr{\op D\rbr{\tau}u,a_{1}u}_{2,\Omega}\\
 & \quad+\varepsilon\rbr{a_{2}u,\op D\rbr{\tau}u}_{2,\Omega}+\varepsilon^{2}\rbr{qu,u}_{2,\Omega}.
\end{aligned}
\label{def: a(=0003C4)}
\end{equation}
Note that, when estimating $\norm{a_{n}u}_{2,\Omega}$, $n\in\Natto 2$,
and~$\abs{\rbr{qu,u}_{2,\Omega}}$, we can replace $u$ by $uv$
with~$v\rbr x=e^{i\abr{x_{1},k}}$. Hence,
\begin{align}
\varepsilon\norm{a_{n}u}_{2,\Omega} & \le\norm{a_{n}}_{\M}\bigrbr{\varepsilon^{2}\norm{\op D_{1}\rbr{\tau}u}_{2,\Omega}^{2}+\norm{\op D_{2}\rbr{\tau}u}_{2,\Omega}^{2}+\varepsilon^{2}\norm u_{2,\Omega}^{2}}^{1/2},\label{est: a=002099 on =0003A9 with =0003B5}\\
\varepsilon^{2}\abs{\rbr{qu,u}_{2,\Omega}} & \le\norm q_{\M}\bigrbr{\varepsilon^{2}\norm{\op D_{1}\rbr{\tau}u}_{2,\Omega}^{2}+\norm{\op D_{2}\rbr{\tau}u}_{2,\Omega}^{2}+\varepsilon^{2}\norm u_{2,\Omega}^{2}};\label{est: q on =0003A9 with =0003B5}
\end{align}
in~particular, this means that
\begin{align}
\varepsilon\norm{a_{n}u}_{2,\Omega} & \le\norm{a_{n}}_{\M}\bigrbr{\norm{\op D\rbr{\tau}u}_{2,\Omega}^{2}+\varepsilon^{2}\norm u_{2,\Omega}^{2}}^{1/2},\label{est: a=002099 on =0003A9 without =0003B5}\\
\varepsilon^{2}\abs{\rbr{qu,u}_{2,\Omega}} & \le\norm q_{\M}\bigrbr{\norm{\op D\rbr{\tau}u}_{2,\Omega}^{2}+\varepsilon^{2}\norm u_{2,\Omega}^{2}}\label{est: q on =0003A9 without =0003B5}
\end{align}
for~all~$u\in H^{1}\rbr{\Omega}$. Therefore, the same reasoning
as for $\form a^{\varepsilon}$ gives
\begin{equation}
\abs{\form a\rbr{\tau}\sbr{u,v}}\le C_{\flat}\norm u_{1,2,\Omega;\tau}\norm v_{1,2,\Omega;\tau},\qquad u,v\in\widetilde H^{1}\rbr{\Omega},\label{est: a(=0003C4) is bounded}
\end{equation}
and
\begin{equation}
\Re\form a\rbr{\tau}\sbr u\ge c_{*}\norm{\op D\rbr{\tau}u}_{2,\Omega}^{2}-\varepsilon^{2}c_{\natural}\norm u_{2,\Omega}^{2},\qquad u\in\widetilde H^{1}\rbr{\Omega}.\label{est: a(=0003C4) is coercive}
\end{equation}

Define $\op A_{\mu}\rbr{\tau}=\op A\rbr{\tau}-\varepsilon^{2}\mu\colon\widetilde H^{1}\rbr{\Omega}\to\widetilde H^{1}\rbr{\Omega}^{*}$
to be the operator associated with the form~$\form a_{\mu}\rbr{\tau}=\form a\rbr{\tau}-\varepsilon^{2}\mu$.
It~follows  that $\op A_{\mu}\rbr{\tau}$ is an isomorphism if~$\mu\notin\set S_{1}$.
\begin{lem}
\label{lem: Estimates for A(=0003C4)}For~any $\mu\notin\set S$
and~$\tau\in\set T$ we have
\begin{align*}
\norm{\rbr{\op A_{\mu}\rbr{\tau}}^{-1}}_{\B\rbr{L_{2}\rbr{\Omega}}} & \lesssim\abs{\tau}^{-2},\\
\norm{\op D\rbr{\tau}\rbr{\op A_{\mu}\rbr{\tau}}^{-1}}_{\B\rbr{L_{2}\rbr{\Omega}}^{d}} & \lesssim\abs{\tau}^{-1},\\
\norm{\op D\rbr{\tau}\rbr{\op A_{\mu}\rbr{\tau}}^{-1}\op D\rbr{\tau}}_{\B\rbr{L_{2}\rbr{\Omega}}^{d\times d}} & \lesssim1,\\
\norm{\op D\rbr{\tau}\op D_{2}\rbr{\tau}\rbr{\op A_{\mu}\rbr{\tau}}^{-1}}_{\B\rbr{L_{2}\rbr{\Omega}}^{d\times d}} & \lesssim1,
\end{align*}
where the constants depend on  $\mu$ and the multiplier norms of
the coefficients.\end{lem}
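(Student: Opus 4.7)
The plan is to combine the coercivity~(\ref{est: a(=0003C4) is coercive}) with a Bloch/Poincar\'{e} inequality on $\widetilde H^{1}(\Omega)$ to obtain, via Lax--Milgram, the uniform invertibility of $\op A_{\mu}(\tau)$ in the parameter-dependent norm $\norm{\wc}_{1,2,\Omega;\tau}$, and then read off the first three estimates by duality. The fourth estimate, which needs additional regularity in the $x_{2}$-direction, will be extracted by Nirenberg's difference-quotient technique.

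The key auxiliary ingredient is the Bloch-type lower bound
\[
  \norm{\op D_{1}(\tau)u}_{2,\Omega}^{2}\ge\abs{k}^{2}\norm u_{2,\Omega}^{2},\qquad u\in\widetilde H^{1}(\Omega),\ k\in\overline{\Omega_{1}^{*}},
\]
which follows from the Fourier representation in~$x_{1}$ together with the defining property $\abs k\le\abs{k+\lambda^{*}}$ of the Brillouin zone. Combined with~(\ref{est: a(=0003C4) is coercive}) via a convex combination, it yields, for a sufficiently negative real $\mu_{0}<-(c_{*}+c_{\natural})$,
\[
  \Re\form a_{\mu_{0}}(\tau)[u]\gtrsim\norm{\op D(\tau)u}_{2,\Omega}^{2}+\abs\tau^{2}\norm u_{2,\Omega}^{2}=\norm u_{1,2,\Omega;\tau}^{2}.
\]
Lax--Milgram then makes $\op A_{\mu_{0}}(\tau)$ a uniform isomorphism from $\widetilde H^{1}(\Omega)$ onto its dual in these parameter-dependent norms. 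For $f\in L_{2}(\Omega)$, the embedding into $\widetilde H^{1}(\Omega)^{*}$ has norm at most~$\abs\tau^{-1}$ (because $\abs\tau\norm v_{2,\Omega}\le\norm v_{1,2,\Omega;\tau}$), so $\norm u_{1,2,\Omega;\tau}\lesssim\abs\tau^{-1}\norm f_{2,\Omega}$; this produces the $\abs\tau^{-2}$ factor of the first estimate and the $\abs\tau^{-1}$ factor of the second. The third follows in the same way once the right-most $\op D(\tau)$ is interpreted as the divergence $L_{2}(\Omega)^{d}\to\widetilde H^{1}(\Omega)^{*}$, whose norm in the dual parameter norm is at most one.

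To extend from $\mu_{0}$ to arbitrary $\mu\notin\set S$ I would perform a case split on $\abs k$ versus $\varepsilon$. When $\abs k$ is large enough compared to $\varepsilon$ (the threshold depending on $\abs\mu$), the same direct coercivity still dominates the extra $\varepsilon^{2}(\mu-\mu_{0})\norm u^{2}$ perturbation and delivers the estimates with $\mu$ in place of~$\mu_{0}$. When $\abs k\lesssim\varepsilon$, so that $\abs\tau\sim\varepsilon$, invertibility is imported from the global problem: since $\mu\notin\set S_{1}$, the form $\form a_{\mu}^{\varepsilon}$ is strictly sectorial and $(\op A_{\mu}^{\varepsilon})^{-1}$ is bounded on $L_{2}(\Xi)$ uniformly in $\varepsilon$, and the Gelfand transform (applied after the inverse scaling $\op S^{\varepsilon}\wc\op S^{1/\varepsilon}$ in~$x_{1}$) exhibits $\op A_{\mu}^{\varepsilon}$ as a direct integral whose fibers are proportional to $\varepsilon^{-2}\op A_{\mu}(\tau)$; the resulting uniform bound reads $\norm{(\op A_{\mu}(\tau))^{-1}}_{\B(L_{2}(\Omega))}\lesssim\varepsilon^{-2}\lesssim\abs\tau^{-2}$ in that regime. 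The $\op D(\tau)$-weighted variants in the second and third estimates follow in the same range by returning to the weak equation and using boundedness of $\form a_{\mu}(\tau)$.

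The fourth estimate is handled by Nirenberg's difference-quotient technique in the $x_{2}$-directions: the tangential difference quotient $\Delta_{h,n}u$ of $u=(\op A_{\mu}(\tau))^{-1}f$ satisfies an equation of the same structure with right-hand side built from $D_{2,n}A$, $D_{2,n}a_{j}$, $D_{2,n}q$, all of which are multipliers by the definition of $\S(\wc,\wc)$, applied to $u$ and $\op D(\tau)u$. Letting $h\to 0$ and applying the three estimates already proved to this derivative equation yields $\op D_{2}u\in\widetilde H^{1}(\Omega)$ with $\norm{\op D(\tau)\op D_{2}(\tau)u}_{2,\Omega}\lesssim\norm f_{2,\Omega}$. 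The main obstacle is the reconciliation of the two regimes in the preceding step: once $\abs k$ drops to the scale of~$\varepsilon$, the direct Lax--Milgram argument fails because the Poincar\'{e} term $\abs k^{2}\norm u^{2}$ is no longer large enough to swallow the $\varepsilon^{2}\mu$ perturbation, so invertibility there must be obtained from the global operator via the Gelfand decomposition and then matched uniformly in~$\tau$ with the direct bound, while keeping the constants dependent only on $r_{\Lambda}$, $\mu$, and the multiplier norms of the coefficients.
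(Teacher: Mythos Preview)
Your approach is correct and follows the paper closely for the essential ingredients: the Bloch lower bound $\norm{\op D_{1}(\tau)u}_{2,\Omega}^{2}\ge\abs k^{2}\norm u_{2,\Omega}^{2}$, the coercivity~(\ref{est: a(=0003C4) is coercive}), and the Nirenberg difference-quotient argument for the fourth estimate. The paper proves the first three estimates exactly as you outline, first for $\mu\in\set R=\{z:\Re z<-c_{\natural}\}$, which is your ``$\mu_{0}$ sufficiently negative'' step.

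Where you diverge is in the extension to general $\mu\notin\set S$. The paper simply says ``the general case then follows by the resolvent identity'': once the four estimates are known at $\mu_{0}\in\set R$, one writes
\[
  (\op A_{\mu}(\tau))^{-1}=(\op A_{\mu_{0}}(\tau))^{-1}+\varepsilon^{2}(\mu-\mu_{0})(\op A_{\mu_{0}}(\tau))^{-1}(\op A_{\mu}(\tau))^{-1}
\]
and feeds in the a~priori bound $\norm{(\op A_{\mu}(\tau))^{-1}}_{\B(L_{2}(\Omega))}\lesssim\varepsilon^{-2}$, which is available from sectoriality of $\form a_{\mu}(\tau)$ (this is the content of the sentence ``$\op A_{\mu}(\tau)$ is an isomorphism if $\mu\notin\set S_{1}$'' stated just before the lemma). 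Since $\varepsilon^{2}\abs\tau^{-2}\le 1$, each of the first three estimates at $\mu_{0}$ transfers to $\mu$ with a factor $1+\abs{\mu-\mu_{0}}C$; the fourth follows likewise. Your case split on $\abs k$ versus $\varepsilon$ and appeal to the global operator via the Gelfand direct integral produces the same a~priori $\varepsilon^{-2}$ bound, so your argument is sound, but it is more circuitous: the direct-integral route gives the bound only for a.e.~$k$ (fixable by continuity in $k$), and you end up importing the invertibility of $\op A_{\mu}^{\varepsilon}$ to prove a purely local statement. The resolvent-identity route avoids the regime split entirely and keeps the argument on~$\Omega$.
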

\begin{proof}
We  do the case~$\mu\in\set R$, where
\[
\set R=\bigcbr{z\in\C\colon\Re z<-c_{\natural}}.
\]
The~general case then follows by the resolvent identity.

Expanding $u\in\widetilde H^{1}\rbr{\Omega}$ in a Fourier series
\[
u\rbr x=\abs{\Omega_{1}}^{-1/2}\sum_{\lambda^{*}\in\Lambda^{*}}\hat{u}_{\lambda^{*}}\rbr{x_{2}}\,e^{-i\abr{x_{1},\lambda^{*}}},
\]
we find that, for all~$k\in\Omega_{1}^{*}$, 
\begin{equation}
\norm{\op D_{1}\rbr{\tau}u}_{2,\Omega}^{2}=\sum_{\lambda^{*}\in\Lambda^{*}}\abs{\lambda^{*}+k}^{2}\norm{\hat{u}_{\lambda^{*}}}_{2,\Omega_{2}}^{2}\ge\abs k^{2}\norm u_{2,\Omega}^{2},\label{est: D=002081(=0003C4) is invertible}
\end{equation}
which means that
\[
\abs{\tau}^{2}\norm u_{2,\Omega}^{2}\le\norm{\op D\rbr{\tau}u}_{2,\Omega}^{2}+\varepsilon^{2}\norm u_{2,\Omega}^{2}.
\]
Combining this with (\ref{est: a(=0003C4) is coercive}) gives the
first estimate. The~second is immediate from the first and~(\ref{est: a(=0003C4) is coercive}),
and the third follows at once from~(\ref{est: a(=0003C4) is coercive}).
It~remains to prove the last.

We shall use the classical technique of difference quotients. To~that
end, we introduce a little notation. Let $e_{2,m}$, $m\in\Natto{d_{2}}$,
be the unit vector along the $x_{2,m}$\nobreakdash-axis. If~$u$
is any function on $\Omega$, then we define the difference quotient~$D_{2,m}^{h}$
in the variable~$x_{2,m}$ of size~$h\in\R\setminus\cbr 0$ by
setting $D_{2,m}^{h}u=-ih^{-1}\rbr{\op T_{2,m}^{h}u-u}$ where $\rbr{\op T_{2,m}^{h}u}\rbr x=u\rbr{x+he_{2,m}}$.
Observe that
\[
\rbr{D_{2,m}^{h}}^{*}=D_{2,m}^{-h}
\]
and
\[
D_{2,m}^{h}\rbr{uv}=\rbr{D_{2,m}^{h}u}\op T_{2,m}^{h}v+u\rbr{D_{2,m}^{h}v}.
\]

For~$f\in L_{2}\rbr{\Omega}$ fixed, we set~$w=\rbr{\op A_{\mu}\rbr{\tau}}^{-1}f$.
Then
\begin{equation}
\form a_{\mu}\rbr{\tau}\sbr{D_{2,m}^{h}w}=\rbr{f,D_{2,m}^{-h}D_{2,m}^{h}w}_{2,\Omega}-\bigrbr{\sbr{D_{2,m}^{h},\op A_{\mu}\rbr{\tau}}w,D_{2,m}^{h}w}_{2,\Omega}.\label{eq: Elliptic regularity equation}
\end{equation}
If~we show that
\begin{equation}
\bigabs{\bigrbr{\sbr{D_{2,m}^{h},\op A_{\mu}\rbr{\tau}}u,v}_{2,\Omega}}\lesssim\norm u_{1,2,\Omega;\tau}\norm v_{1,2,\Omega;\tau},\qquad u,v\in\widetilde H^{1}\rbr{\Omega},\label{est: Commutator of D=002082,=002098 and A(=0003C4) is bounded}
\end{equation}
where the constant is independent of~$h$, then, by the estimates
that we  just proved, the right-hand side of (\ref{eq: Elliptic regularity equation})
will not exceed
\[
\begin{aligned}\hspace{2em} & \hspace{-2em}\varepsilon^{-1}\norm{\op D\rbr{\tau}D_{2,m}^{h}w}_{2,\Omega}\norm f_{2,\Omega}+\abs{\tau}^{-1}C\norm{D_{2,m}^{h}w}_{1,2,\Omega;\tau}\norm f_{2,\Omega}\\
 & \le2^{-1}c_{*}\norm{\op D\rbr{\tau}D_{2,m}^{h}w}_{2,\Omega}^{2}+\varepsilon^{2}\abs{c_{\natural}+\Re\mu}\,\norm{D_{2,m}^{h}w}_{2,\Omega}^{2}+\varepsilon^{-2}C\norm f_{2,\Omega}^{2}
\end{aligned}
\]
with~some constant~$C$. We have used here the fact that, for~any~$h$,
\[
\norm{D_{2,m}^{-h}D_{2,m}^{h}w}_{2,\Omega}\le\norm{D_{2,m}D_{2,m}^{h}w}_{2,\Omega}.
\]
On~the~other hand, since  $-\rbr{c_{\natural}+\Re\mu}>0$, it follows
from (\ref{est: a(=0003C4) is coercive}) that
\[
\Re\form a_{\mu}\rbr{\tau}\sbr{D_{2,m}^{h}w}\ge c_{*}\norm{\op D\rbr{\tau}D_{2,m}^{h}w}_{2,\Omega}^{2}+\varepsilon^{2}\abs{c_{\natural}+\Re\mu}\,\norm{D_{2,m}^{h}w}_{2,\Omega}^{2}.
\]
As~a~result,
\[
\norm{\op D\rbr{\tau}\varepsilon D_{2,m}^{h}w}_{2,\Omega}\lesssim\norm f_{2,\Omega}
\]
uniformly in~$h$. Thus, there exists $\op D\rbr{\tau}\op D_{2,m}\rbr{\tau}w$,
with 
\[
\norm{\op D\rbr{\tau}\op D_{2,m}\rbr{\tau}w}_{2,\Omega}\lesssim\norm f_{2,\Omega},
\]
as~desired.

We conclude by proving~(\ref{est: Commutator of D=002082,=002098 and A(=0003C4) is bounded}).
Since 
\[
\begin{aligned}\bigrbr{\sbr{D_{2,m}^{h},\op A_{\mu}\rbr{\tau}}\op T_{2,m}^{-h}u,v}_{2,\Omega} & =\bigrbr{\rbr{D_{2,m}^{h}A}\op D\rbr{\tau}u,\op D\rbr{\tau}v}_{2,\Omega}-\varepsilon\bigrbr{\op D\rbr{\tau}u,\rbr{D_{2,m}^{h}a_{1}}v}_{2,\Omega}\\
 & \quad+\varepsilon\bigrbr{\rbr{D_{2,m}^{h}a_{2}}u,\op D\rbr{\tau}v}_{2,\Omega}+\varepsilon^{2}\bigrbr{\rbr{D_{2,m}^{h}q}u,v}_{2,\Omega},
\end{aligned}
\]
we see that it suffices to show that each coefficient of this form
is still a multiplier with norm bounded by a constant independent
of~$h$, because then an argument similar to the one we used for
proving (\ref{est: a(=0003C4) is bounded}) will lead to~(\ref{est: Commutator of D=002082,=002098 and A(=0003C4) is bounded}).
Obviously,
\[
\rbr{D_{2,m}^{h}\gamma}\rbr x=\int_{\rbr{0,1}}\rbr{\op T_{2,m}^{th}D_{2,m}\gamma}\rbr x\,dt
\]
for~every  function~$\gamma$ that has a derivative~$D_{2,m}\gamma\in L_{2}\rbr{\Omega}$.
A~duality argument shows that if $D_{2,m}\gamma\in H^{1}\rbr{\Omega}^{*}$,
then
\[
\bigrbr{\rbr{D_{2,m}^{h}\gamma}u,u}_{2,\Omega}=\int_{\rbr{0,1}}\bigrbr{\rbr{D_{2,m}\gamma}\op T_{2,m}^{-th}u,\op T_{2,m}^{-th}u}_{2,\Omega}dt,
\]
any~$u\in C^{1}\rbr{\bar{\Omega}}$. Hence, $\norm{D_{2,m}^{h}A}_{\M}\le\norm{D_{2,m}A}_{\M}$,
$\norm{D_{2,m}^{h}a_{n}}_{\M}\le\norm{D_{2,m}a_{n}}_{\M}$, $n\in\Natto 2$,
and~$\norm{D_{2,m}^{h}q}_{\M}\le\norm{D_{2,m}q}_{\M}$. This  completes
the proof.\qedspace
\end{proof}
Let $\op A_{\mu}\rbr{\tau}^{+}\colon\widetilde H^{1}\rbr{\Omega}\to\widetilde H^{1}\rbr{\Omega}^{*}$
be the the formal adjoint of~$\op A_{\mu}\rbr{\tau}$. From~(\ref{est: a(=0003C4) is bounded})
and~(\ref{est: a(=0003C4) is coercive}), we see that $\op A_{\mu}\rbr{\tau}^{+}$
is also an isomorphism whenever~$\mu\notin\set S_{1}$. Moreover,
 the conclusion of Lemma~\ref{lem: Estimates for A(=0003C4)} holds
for~$\op A_{\mu}\rbr{\tau}^{+}$. It~is easy to note the relationship
between $\op A_{\mu}\rbr{\tau}$ and~$\op A_{\mu}\rbr{\tau}^{+}$.
Indeed, a suitable restriction of $\op A_{\mu}\rbr{\tau}^{+}$ is
the adjoint of the restriction of~$\op A_{\mu}\rbr{\tau}$, so that,
if $\mu\notin\set S_{1}$,
\begin{equation}
\bigrbr{\rbr{\op A_{\mu}\rbr{\tau}}^{-1}f,f}_{2,\Omega}=\bigrbr{f,\rbr{\op A_{\mu}\rbr{\tau}^{+}}^{-1}f}_{2,\Omega},\qquad f\in L_{2}\rbr{\Omega}.\label{eq: Relation for A(=0003C4) and A(=0003C4)=00207A}
\end{equation}

We shall think of $L_{2}\rbr{\Omega}$ as the tensor product~$L_{2}\rbr{\Omega_{1}}\otimes L_{2}\rbr{\Omega_{2}}$.
Recall that $\op G$ is the Gelfand transform and $\op S^{\varepsilon}$
is the scaling transformation. Clearly, $\op G\op S^{\varepsilon}\otimes\op I$
maps $H^{1}\rbr{\Xi}$ onto $L_{2}\rbr{\Omega_{1}^{*};\widetilde H^{1}\rbr{\Omega}}$
and, for any $u\in H^{1}\rbr{\Xi}$,
\[
\form a_{\mu}^{\varepsilon}\sbr u=\varepsilon^{-2}\int_{\Omega_{1}^{*}}\form a_{\mu}\rbr{\tau}\sbr{\tilde{u}\rbr{k,\wc}}\,dk,
\]
where~$\tilde{u}=\rbr{\op G\op S^{\varepsilon}\otimes\op I}u$; that
is,
\begin{equation}
\rbr{\op G\op S^{\varepsilon}\otimes\op I}\rbr{\op A_{\mu}^{\varepsilon}}^{-1}\rbr{\op G\op S^{\varepsilon}\otimes\op I}^{-1}=\int_{\Omega_{1}^{*}}^{\oplus}\varepsilon^{2}\rbr{\op A_{\mu}\rbr{\tau}}^{-1}dk.\label{eq: Direct integral for A=001D4B}
\end{equation}

Now, we would like to do the same for the operator~$\op A_{\mu}^{0}$.
To~this end, let $\form a^{0}\rbr{\tau}$ be the form on $\widetilde H^{1}\rbr{\Omega}$
defined by 
\begin{equation}
\begin{aligned}\form a^{0}\rbr{\tau}\sbr u & =\rbr{A^{0}\op D\rbr{\tau}u,\op D\rbr{\tau}u}_{2,\Omega}+\varepsilon\rbr{\op D\rbr{\tau}u,a_{1}^{0}u}_{2,\Omega}\\
 & \quad+\varepsilon\rbr{a_{2}^{0}u,\op D\rbr{\tau}u}_{2,\Omega}+\varepsilon^{2}\rbr{q^{0}u,u}_{2,\Omega}
\end{aligned}
\label{def: a=002070(=0003C4)}
\end{equation}
where
\[
\rbr{q^{0}u,u}_{2,\Omega}=\int_{\Omega_{1}}\rbr{q^{0}u\rbr{x_{1},\wc},u\rbr{x_{1},\wc}}_{2,\Omega_{2}}dx_{1}.
\]
The~same arguments used to obtain (\ref{est: a=002070 is bounded})
and~(\ref{est: a=002070 is coercive}) now show that 
\begin{equation}
\abs{\form a^{0}\rbr{\tau}\sbr{u,v}}\le C_{\flat}^{0}\norm u_{1,2,\Omega;\tau}\norm v_{1,2,\Omega;\tau},\qquad u,v\in\widetilde H^{1}\rbr{\Omega},\label{est: a=002070(=0003C4) is bounded}
\end{equation}
and
\begin{equation}
\Re\form a^{0}\rbr{\tau}\sbr u\ge c_{*}\norm{\op D\rbr{\tau}u}_{2,\Omega}^{2}-\varepsilon^{2}c_{\natural}\norm u_{2,\Omega}^{2},\qquad u\in\widetilde H^{1}\rbr{\Omega}.\label{est: a=002070(=0003C4) is coercive}
\end{equation}
Let $\op A_{\mu}^{0}\rbr{\tau}=\op A^{0}\rbr{\tau}-\varepsilon^{2}\mu\colon\widetilde H^{1}\rbr{\Omega}\to\widetilde H^{1}\rbr{\Omega}^{*}$
be the operator corresponding to~$\form a_{\mu}^{0}\rbr{\tau}=\form a^{0}\rbr{\tau}-\varepsilon^{2}\mu$.
Then $\op A_{\mu}^{0}\rbr{\tau}$ is an isomorphism if~$\mu\notin\set S_{0}$.
\begin{lem}
\label{lem: Estimates for A=002070(=0003C4)}For~any $\mu\notin\set S$
and~$\tau\in\set T$ we have
\begin{align*}
\norm{\rbr{\op A_{\mu}^{0}\rbr{\tau}}^{-1}}_{\B\rbr{L_{2}\rbr{\Omega}}} & \lesssim\abs{\tau}^{-2},\\
\norm{\op D\rbr{\tau}\rbr{\op A_{\mu}^{0}\rbr{\tau}}^{-1}}_{\B\rbr{L_{2}\rbr{\Omega}}^{d}} & \lesssim\abs{\tau}^{-1},\\
\norm{\op D\rbr{\tau}\rbr{\op A_{\mu}^{0}\rbr{\tau}}^{-1}\op D\rbr{\tau}}_{\B\rbr{L_{2}\rbr{\Omega}}^{d\times d}} & \lesssim1,\\
\norm{\op D\rbr{\tau}\op D\rbr{\tau}\rbr{\op A_{\mu}^{0}\rbr{\tau}}^{-1}}_{\B\rbr{L_{2}\rbr{\Omega}}^{d\times d}} & \lesssim1,
\end{align*}
where the constants depend on  $\mu$ and the multiplier norms of
the coefficients.\end{lem}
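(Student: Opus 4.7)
The plan is to mirror the proof of Lemma~\ref{lem: Estimates for A(=0003C4)} almost verbatim, exploiting one genuinely new feature: the effective coefficients $A^{0}$, $a_{1}^{0}$, $a_{2}^{0}$ and $q^{0}$ depend only on~$x_{2}$. First, by the resolvent identity it is enough to treat $\mu\in\set R$. For the first three estimates the arguments carry over line-for-line: the Fourier identity~(\ref{est: D=002081(=0003C4) is invertible}) still delivers $\abs{\tau}^{2}\norm u_{2,\Omega}^{2}\le\norm{\op D\rbr{\tau}u}_{2,\Omega}^{2}+\varepsilon^{2}\norm u_{2,\Omega}^{2}$, and combining it with the coercivity~(\ref{est: a=002070(=0003C4) is coercive}) and boundedness~(\ref{est: a=002070(=0003C4) is bounded}) of $\form a^{0}\rbr{\tau}$ yields, in succession, the bounds on $\rbr{\op A_{\mu}^{0}\rbr{\tau}}^{-1}$, on $\op D\rbr{\tau}\rbr{\op A_{\mu}^{0}\rbr{\tau}}^{-1}$, and on $\op D\rbr{\tau}\rbr{\op A_{\mu}^{0}\rbr{\tau}}^{-1}\op D\rbr{\tau}$.

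The genuinely new content is the fourth estimate, in which the second $\op D\rbr{\tau}$ stands on the right of the resolvent. I would split $\op D\rbr{\tau}=\op D_{1}\rbr{\tau}+\op D_{2}\rbr{\tau}$ and handle the two pieces separately. For $\op D\rbr{\tau}\op D_{2}\rbr{\tau}\rbr{\op A_{\mu}^{0}\rbr{\tau}}^{-1}$, I apply the Nirenberg difference-quotient argument in each $x_{2,m}$ direction exactly as in the proof of Lemma~\ref{lem: Estimates for A(=0003C4)}; the analogue of the commutator estimate~(\ref{est: Commutator of D=002082,=002098 and A(=0003C4) is bounded}) is available because $A^{0}$, $a_{n}^{0}$ and $q^{0}$ lie in the class~$\S$, which was established from the definitions~(\ref{def: A=002070})--(\ref{def: q=002070}) by Lemma~\ref{lem: The auxiliary problem solution as multiplicators} applied to $N$ and~$M$; nothing in that part of the argument requires any new input.

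For $\op D\rbr{\tau}\op D_{1}\rbr{\tau}\rbr{\op A_{\mu}^{0}\rbr{\tau}}^{-1}$ the key point is that, since $A^{0}$, $a_{1}^{0}$, $a_{2}^{0}$ and $q^{0}$ depend only on~$x_{2}$, they commute with every component of~$\op D_{1}$, and, as $\vect k$ is constant, $\op D_{1,m}\rbr{\tau}$ commutes with them as well; also $\op D\rbr{\tau}$ itself, being a constant-coefficient first-order operator, commutes with $\op D_{1,m}$. Hence $[\op D_{1,m}\rbr{\tau},\op A_{\mu}^{0}\rbr{\tau}]=0$ for every $m\in\Natto{d_{1}}$. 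Consequently, for $f\in L_{2}\rbr{\Omega}$ and $w=\rbr{\op A_{\mu}^{0}\rbr{\tau}}^{-1}f$, applying $\op D_{1,m}\rbr{\tau}$ to $\op A_{\mu}^{0}\rbr{\tau}w=f$ and using the vanishing commutator gives
\[
\op D_{1,m}\rbr{\tau}w=\rbr{\op A_{\mu}^{0}\rbr{\tau}}^{-1}\op D_{1,m}\rbr{\tau}f,
\]
the right-hand side being understood through the duality $\op D_{1,m}\rbr{\tau}f\in\widetilde H^{1}\rbr{\Omega}^{*}$. Acting by $\op D\rbr{\tau}$ on the left and invoking the third estimate of the present lemma (already proved in the previous paragraph) yields $\norm{\op D\rbr{\tau}\op D_{1,m}\rbr{\tau}w}_{2,\Omega}\lesssim\norm f_{2,\Omega}$ uniformly in $\tau$ and~$m$. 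Combining with the $\op D\rbr{\tau}\op D_{2}\rbr{\tau}$ bound completes the fourth estimate.

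No step should pose a real obstacle: coercivity and boundedness of $\form a^{0}\rbr{\tau}$ are in hand, the multiplier regularity of the effective coefficients has already been established, and the $x_{1}$-commutation is a routine distributional identity. The mild subtlety is to interpret $\op D_{1,m}\rbr{\tau}f$ as a continuous linear functional on $\widetilde H^{1}\rbr{\Omega}$ so that $\rbr{\op A_{\mu}^{0}\rbr{\tau}}^{-1}$ may legitimately be applied to it, but this is precisely the duality convention already in force throughout Section~\ref{sec: Problem on the fundamental domain}.
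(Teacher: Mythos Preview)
Your proposal is correct and matches what the paper intends: its proof is the single sentence ``similar to that of Lemma~\ref{lem: Estimates for A(=0003C4)}'', and you have spelled out exactly that, including the one genuinely new point---that the $x_{1}$-independence of the effective coefficients upgrades the $\op D\rbr{\tau}\op D_{2}\rbr{\tau}$ bound to a full $\op D\rbr{\tau}\op D\rbr{\tau}$ bound. The commutation identity $\op D_{1,m}\rbr{\tau}\rbr{\op A_{\mu}^{0}\rbr{\tau}}^{-1}=\rbr{\op A_{\mu}^{0}\rbr{\tau}}^{-1}\op D_{1,m}\rbr{\tau}$ can be made rigorous either by Fourier series in $x_{1}$ or, equivalently, by running the Nirenberg argument in the $x_{1}$-direction with a commutator that is identically zero; either route is what ``similar'' is meant to cover.
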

\begin{proof}
The~proof is similar to that of Lemma~\ref{lem: Estimates for A(=0003C4)}.\qedspace
\end{proof}
Since, for every~$u\in H^{1}\rbr{\Xi}$,
\[
\form a_{\mu}^{0}\sbr u=\varepsilon^{-2}\int_{\Omega_{1}^{*}}\form a_{\mu}^{0}\rbr{\tau}\sbr{\tilde{u}\rbr{k,\wc}}\,dk,
\]
with~$\tilde{u}=\rbr{\op G\op S^{\varepsilon}\otimes\op I}u$, it
follows that
\begin{equation}
\rbr{\op G\op S^{\varepsilon}\otimes\op I}\rbr{\op A_{\mu}^{0}}^{-1}\rbr{\op G\op S^{\varepsilon}\otimes\op I}^{-1}=\int_{\Omega_{1}^{*}}^{\oplus}\varepsilon^{2}\rbr{\op A_{\mu}^{0}\rbr{\tau}}^{-1}dk.\label{eq: Direct integral for A=002070}
\end{equation}
As~a~result, we may restate Theorem~\ref{thm: Convergence} in
terms of the fibers~$\op A_{\mu}\rbr{\tau}$ and~$\op A_{\mu}^{0}\rbr{\tau}$.
\begin{thm}
\label{thm: Convergence on =0003A9}Let~$\mu\notin\set S$. Then
for all $\tau\in\set T$ it holds that 
\begin{align*}
\norm{\rbr{\op A_{\mu}\rbr{\tau}}^{-1}-\rbr{\op A_{\mu}^{0}\rbr{\tau}}^{-1}}_{\B\rbr{L_{2}\rbr{\Omega}}} & \lesssim\abs{\tau}^{-1},\\
\norm{\op D_{2}\rbr{\tau}\rbr{\op A_{\mu}\rbr{\tau}}^{-1}-\op D_{2}\rbr{\tau}\rbr{\op A_{\mu}^{0}\rbr{\tau}}^{-1}}_{\B\rbr{L_{2}\rbr{\Omega}}^{d}} & \lesssim1,
\end{align*}
where the constants depend only on $r_{\Lambda}$, $\mu$ and the
multiplier norms of the coefficients.
\end{thm}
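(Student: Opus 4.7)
The approach is a classical two-scale expansion, packaged so that all estimates are obtained directly from the a priori bounds in Lemmas \ref{lem: Estimates for A(=0003C4)} and \ref{lem: Estimates for A=002070(=0003C4)}. Fix $f \in L_{2}\rbr{\Omega}$ and put $v = \rbr{\op A_{\mu}^{0}\rbr{\tau}}^{-1} f$. By Lemma \ref{lem: Estimates for A=002070(=0003C4)} we have $\norm{v}_{2,\Omega} \lesssim \abs{\tau}^{-2} \norm{f}_{2,\Omega}$, $\norm{\op D\rbr{\tau} v}_{2,\Omega} \lesssim \abs{\tau}^{-1} \norm{f}_{2,\Omega}$ and $\norm{\op D\rbr{\tau} \op D\rbr{\tau} v}_{2,\Omega} \lesssim \norm{f}_{2,\Omega}$.

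I would then introduce the corrected approximation
\[
u = v + \varepsilon \rbr{N \op D\rbr{\tau} v + M v},
\]
which lies in $\widetilde H^{1}\rbr{\Omega}$ thanks to the multiplier memberships of $N$, $M$, $\op D_{1} N$, $\op D_{1} M$ recorded after Lemma \ref{lem: The auxiliary problem solution as multiplicators}; these same bounds give $\norm{u - v}_{2,\Omega} \lesssim \varepsilon \abs{\tau}^{-1} \norm{f}_{2,\Omega}$. The heart of the argument is the computation of the residual $\op A_{\mu}\rbr{\tau} u - f = \op A_{\mu}\rbr{\tau} u - \op A_{\mu}^{0}\rbr{\tau} v$ in $\widetilde H^{1}\rbr{\Omega}^{*}$. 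Expanding $\op D\rbr{\tau}$ on the correctors and using the defining equations (\ref{def: N}), (\ref{def: M}) for $N$ and $M$ in their pointwise-in-$x_{2}$ form (\ref{eq: The auxiliary problems on =0003A9=002081}), together with the definitions (\ref{def: A=002070})--(\ref{def: q=002070}) of the effective coefficients, the terms of order $1$ cancel identically and the first-order terms involving only $\op D_{1}$-derivatives of $N, M$ assemble into a divergence. The outcome should be a schematic decomposition
\[
\op A_{\mu}\rbr{\tau} u - f = \varepsilon \op D\rbr{\tau}^{*} g_{\tau} + \varepsilon^{2} h_{\tau},
\]
with $\norm{g_{\tau}}_{2,\Omega} + \norm{h_{\tau}}_{2,\Omega} \lesssim \norm{f}_{2,\Omega}$ uniformly in $\tau$; the $\varepsilon^{2} h_{\tau}$ piece absorbs the $\op D_{2}$-derivatives of $N, M$ and the low-order contributions from $a_{1}, a_{2}, q$.

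Applying $\rbr{\op A_{\mu}\rbr{\tau}}^{-1}$ to this residual, together with Lemma \ref{lem: Estimates for A(=0003C4)} (namely $\norm{\rbr{\op A_{\mu}\rbr{\tau}}^{-1}}_{\B\rbr{L_{2}\rbr{\Omega}}} \lesssim \abs{\tau}^{-2}$ and the dual bound $\norm{\rbr{\op A_{\mu}\rbr{\tau}}^{-1} \op D\rbr{\tau}^{*}} \lesssim \abs{\tau}^{-1}$ obtained from the adjoint version of the gradient estimate via (\ref{eq: Relation for A(=0003C4) and A(=0003C4)=00207A})), gives $\norm{u - \rbr{\op A_{\mu}\rbr{\tau}}^{-1} f}_{2,\Omega} \lesssim \varepsilon \abs{\tau}^{-1} \norm{f}_{2,\Omega}$, because $\abs{\tau} \ge \varepsilon$ forces $\varepsilon^{2} \abs{\tau}^{-2} \le \varepsilon \abs{\tau}^{-1}$. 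Combined with the bound on $u - v$, this proves the first estimate. For the $\op D_{2}\rbr{\tau}$ estimate, I would apply $\op D_{2}\rbr{\tau}$ throughout the same decomposition: $\op D_{2}\rbr{\tau}\rbr{u - v}$ is controlled by the multiplier bounds on $\op D_{2} N$, $\op D_{2} M$ together with $\norm{\op D\rbr{\tau} \op D\rbr{\tau} v}_{2,\Omega} \lesssim \norm{f}_{2,\Omega}$ from Lemma \ref{lem: Estimates for A=002070(=0003C4)}, while $\op D_{2}\rbr{\tau}\rbr{u - \rbr{\op A_{\mu}\rbr{\tau}}^{-1} f}$ is handled via the composed bound $\norm{\op D\rbr{\tau} \rbr{\op A_{\mu}\rbr{\tau}}^{-1} \op D\rbr{\tau}}_{\B\rbr{L_{2}\rbr{\Omega}}^{d \times d}} \lesssim 1$ from Lemma \ref{lem: Estimates for A(=0003C4)}, applied to $\op D\rbr{\tau}^{*} g_{\tau}$.

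The main obstacle will be the cancellation step that produces the divergence-plus-small-remainder structure of $\op A_{\mu}\rbr{\tau} u - f$. Since the coefficients are merely Sobolev multipliers (and $q$ is a genuine distribution in $\M\rbr{H^{1}\rbr{\Omega}, H^{1}\rbr{\Omega}^{*}}$), every product in the expansion must be interpreted in the dual pairing via the multiplier norm, and the equations for $N$ and $M$ must be invoked in their weak form (\ref{eq: The auxiliary problems on =0003A9=002081}). Moreover, one must keep track of the commutators between $x_{2}$-dependent coefficients and the factor $\op D_{2}\rbr{\tau} = \varepsilon \op D_{2}$, which generate $O(\varepsilon)$ contributions that have to be funneled into $h_{\tau}$. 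Once this structural identity is in hand, the remainder of the proof reduces to careful bookkeeping against the estimates of Lemmas \ref{lem: Estimates for A(=0003C4)} and \ref{lem: Estimates for A=002070(=0003C4)}.
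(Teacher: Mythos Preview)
Your overall strategy---build a two-scale approximation, compute the residual, and feed it through the resolvent bounds of Lemmas~\ref{lem: Estimates for A(=0003C4)} and~\ref{lem: Estimates for A=002070(=0003C4)}---is indeed the paper's plan, but your implementation carries two genuine errors that break the argument.

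First, the corrector has the wrong scaling. In the fiber problem the correct first approximation is \emph{not} $v+\varepsilon\bigl(N\op D(\tau)v+Mv\bigr)$ but $v+\bigl(N\op D(\tau)+\varepsilon M\bigr)v$; compare~(\ref{def: K(=0003C4)}). You have presumably imported the $\varepsilon$ from the ansatz on $\Xi$, forgetting that the Gelfand--scaling passage $\op G\op S^{\varepsilon}\otimes\op I$ already absorbs it (see~(\ref{eq: Direct integral for K=001D4B})). With your extra $\varepsilon$, the leading contribution $\op D(\tau)^{*}A(\op D_{1}N)\op D(\tau)v$ from the corrector is $O(\varepsilon)$ and can no longer cancel the $O(1)$ term $\op D(\tau)^{*}(A-A^{0})\op D(\tau)v$; the claimed decomposition $\op A_{\mu}(\tau)u-f=\varepsilon\,\op D(\tau)^{*}g_{\tau}+\varepsilon^{2}h_{\tau}$ with $\norm{g_{\tau}}+\norm{h_{\tau}}\lesssim\norm{f}$ therefore fails. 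Even after fixing the $\varepsilon$, the remainder is not governed by powers of $\varepsilon$ alone: the relevant small parameter in the fiber is $\abs{\tau}$, and one needs the finer structure captured by the operators $\op S(\tau)$, $\op T(\tau)$ together with Poincar\'e on $\op P_{1}^{\bot}$ (this is exactly identity~(\ref{eq: Identity for U(=0003C4)}) and estimate~(\ref{est: (U(=0003C4)f,g)})).

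Second, you omit the projection $\op P_{1}$. The multiplier memberships recorded after Lemma~\ref{lem: The auxiliary problem solution as multiplicators} place $N,\op D_{1}N$ in $\M\bigl(L_{2}(\Omega_{2}),L_{2}(\Omega)\bigr)$ and $M,\op D_{1}M$ in $\M\bigl(H^{1}(\Omega_{2}),L_{2}(\Omega)\bigr)$; these act only on functions independent of $x_{1}$. Since $v=(\op A_{\mu}^{0}(\tau))^{-1}f$ depends on $x_{1}$ for general $f$, the product $N\op D(\tau)v$ is not controlled (and may fail to lie in $L_{2}(\Omega)$). The paper therefore splits off $(\op A_{\mu}(\tau))^{-1}\op P_{1}^{\bot}$ and $(\op A_{\mu}^{0}(\tau))^{-1}\op P_{1}^{\bot}$---bounded directly by Poincar\'e and Lemmas~\ref{lem: Estimates for A(=0003C4)}, \ref{lem: Estimates for A=002070(=0003C4)}---and only then introduces the corrector $\op K_{\mu}(\tau)$ acting on $\op P_{1}f$. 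For the second estimate, your plan ``apply $\op D_{2}(\tau)$ throughout'' would additionally require mixed derivatives $\op D_{1}(\tau)\op D_{2}(\tau)(\op A_{\mu}(\tau)^{+})^{-1}$ that Lemma~\ref{lem: Estimates for A(=0003C4)} does not supply; the paper bypasses this with the auxiliary operator $\op E(\tau)$ in the proof.
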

Let $\op P_{1}$ and~$\op P_{2}$ denote the orthogonal projections
in $L_{2}\rbr{\Omega}$ onto $\C\otimes L_{2}\rbr{\Omega_{2}}$ and~$L_{2}\rbr{\Omega_{1}}\otimes\C$,
respectively. Notice that
\[
\rbr{\op G\op S^{\varepsilon}\otimes\op I}\op P^{\varepsilon}\rbr{\op G\op S^{\varepsilon}\otimes\op I}^{-1}=\int_{\Omega_{1}^{*}}^{\oplus}\op P_{1}\,dk.
\]
Define $\op K_{\mu}\rbr{\tau}\colon L_{2}\rbr{\Omega}\to\widetilde H^{1}\rbr{\Omega}$~by
\begin{equation}
\op K_{\mu}\rbr{\tau}=\rbr{N\op D\rbr{\tau}+\varepsilon M}\rbr{\op A_{\mu}^{0}\rbr{\tau}}^{-1}\op P_{1}.\label{def: K(=0003C4)}
\end{equation}

\begin{lem}
\label{lem: Estimates for K(=0003C4)}For~any $\mu\notin\set S$
and~$\tau\in\set T$ we have
\begin{align*}
\norm{\op D_{1}\op K_{\mu}\rbr{\tau}}_{\B\rbr{L_{2}\rbr{\Omega}}^{d}} & \lesssim\abs{\tau}^{-1},\\
\norm{\op D_{1}\op D_{2}\rbr{\tau}\op K_{\mu}\rbr{\tau}}_{\B\rbr{L_{2}\rbr{\Omega}}^{d\times d}} & \lesssim1,
\end{align*}
where the constants depend on  $\mu$ and the multiplier norms of
the coefficients.\end{lem}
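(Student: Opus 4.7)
The plan is to exploit the fact that $\op P_1$ forces the intermediate function to depend only on $x_2$, so that $\op D_1$ only hits $N$ and $M$; the problem then reduces to multiplier estimates combined with the bounds from Lemma~\ref{lem: Estimates for A=002070(=0003C4)}. Fix~$f\in L_2(\Omega)$ and set $v=(\op A_\mu^0(\tau))^{-1}\op P_1 f$. Since $\op P_1 f\in\C\otimes L_2(\Omega_2)$ and the coefficients $A^0$, $a_1^0$, $a_2^0$, $q^0$ depend only on~$x_2$, the problem $\op A_\mu^0(\tau)v=\op P_1 f$ is well posed already on~$\widetilde H^1(\Omega_2)\subset\widetilde H^1(\Omega)$; by uniqueness of the weak solution, $v\in\C\otimes\widetilde H^1(\Omega_2)$. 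In~particular $\op D_1 v=0$ and $g:=\op D(\tau)v=\vect{k}v+\varepsilon\op D_2 v$ is also independent of~$x_1$, and the quantitative Lemma~\ref{lem: Estimates for A=002070(=0003C4)} gives $\|v\|_{2,\Omega}\lesssim|\tau|^{-2}\|f\|_{2,\Omega}$, $\|g\|_{2,\Omega}\lesssim|\tau|^{-1}\|f\|_{2,\Omega}$ and $\|\op D(\tau)g\|_{2,\Omega}\lesssim\|f\|_{2,\Omega}$.

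Next I expand by Leibniz, using $\op D_1 g=\op D_1 v=0$ (and $\op D_1\op D_2 g=\op D_1\op D_2 v=0$). Since $\op K_\mu(\tau)f=Ng+\varepsilon M v$,
\[
\op D_1\op K_\mu(\tau)f=(\op D_1 N)g+\varepsilon(\op D_1 M)v,
\]
and applying $\op D_2(\tau)=\varepsilon\op D_2$ before $\op D_1$ yields
\[
\op D_1\op D_2(\tau)\op K_\mu(\tau)f=\varepsilon(\op D_1\op D_2 N)g+\varepsilon(\op D_1 N)\op D_2 g+\varepsilon^2(\op D_1\op D_2 M)v+\varepsilon^2(\op D_1 M)\op D_2 v.
\]

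For the multiplier bounds I use that, by Lemma~\ref{lem: The auxiliary problem solution as multiplicators} together with the defining properties of the class~$\S$, we have $\op D_1 N,\op D_1\op D_2 N\in\M(L_2(\Omega_2),L_2(\Omega))^{\cdot\times\cdot}$ and $\op D_1 M,\op D_1\op D_2 M\in\M(H^1(\Omega_2),L_2(\Omega))^{\cdot}$. Applied to the $x_2$-only functions $g$, $v$, $\op D_2 g$, $\op D_2 v$, this gives, e.g.,
\[
\|(\op D_1 N)g\|_{2,\Omega}\lesssim\|g\|_{2,\Omega},\qquad\varepsilon\|(\op D_1 M)v\|_{2,\Omega}\lesssim\varepsilon\|v\|_{1,2,\Omega_2}.
\]
For the first asserted bound, the $N$-term is at most $\lesssim|\tau|^{-1}\|f\|_{2,\Omega}$ by the estimate on $g$, while the $M$-term is controlled by $\varepsilon\|v\|_{1,2,\Omega_2}\lesssim\|\op D_2(\tau)v\|_{2,\Omega}+\varepsilon\|v\|_{2,\Omega}\lesssim|\tau|^{-1}\|f\|_{2,\Omega}$, where in the last step I use $\varepsilon\le|\tau|$ (since $|\tau|^2=|k|^2+\varepsilon^2$).

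For the second bound I proceed analogously: the four terms are estimated by $\varepsilon\|g\|_{2,\Omega}$, $\varepsilon\|\op D_2 g\|_{2,\Omega}\le\|\op D_2(\tau)\op D(\tau)v\|_{2,\Omega}$, $\varepsilon^2\|v\|_{1,2,\Omega_2}$ and $\varepsilon^2\|\op D_2 v\|_{1,2,\Omega_2}\le\|\op D_2(\tau)\op D_2(\tau)v\|_{2,\Omega_2}+\varepsilon\|\op D_2(\tau)v\|_{2,\Omega_2}$, and each of these is $\lesssim\|f\|_{2,\Omega}$ by the third and second displays of Lemma~\ref{lem: Estimates for A=002070(=0003C4)}, again using $\varepsilon\le|\tau|$. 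The only genuine obstacle is bookkeeping: one must be careful that the multiplier norm of $\op D_1\op D_2 N$ pairs with an $L_2(\Omega_2)$-norm of $g$ (not $H^1$), which is what makes the coefficient $\varepsilon$ (rather than~$1$) decisive in producing the expected $O(1)$ bound. Everything else is a routine combination of the Leibniz identity with the two earlier lemmas.
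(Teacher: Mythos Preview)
Your argument is correct and is essentially the same as the paper's: both proofs note that $v=(\op A_\mu^0(\tau))^{-1}\op P_1 f$ depends only on~$x_2$, apply the Leibniz rule so that $\op D_1$ falls solely on $N$ and~$M$, and then combine the multiplier properties of $\op D_1 N$, $\op D_1 M$ and their $x_2$-derivatives (from Lemma~\ref{lem: The auxiliary problem solution as multiplicators}) with the resolvent bounds of Lemma~\ref{lem: Estimates for A=002070(=0003C4)}. One small slip: the second-derivative control you invoke for $\|\op D_2(\tau)\op D(\tau)v\|$ and $\|\op D_2(\tau)\op D_2(\tau)v\|$ comes from the \emph{fourth} display of Lemma~\ref{lem: Estimates for A=002070(=0003C4)} (the bound on $\op D(\tau)\op D(\tau)(\op A_\mu^0(\tau))^{-1}$), not the third.
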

\begin{proof}
Let~$f\in L_{2}\rbr{\Omega}$, and let $u=\rbr{\op A_{\mu}^{0}\rbr{\tau}}^{-1}\op P_{1}f$
and~$U=\op K_{\mu}\rbr{\tau}f$. Then, by Lemma~\ref{lem: Estimates for A=002070(=0003C4)},
it follows that
\[
\norm{\op D_{1}U}_{2,\Omega}\le\abs{\Omega_{1}}^{-1/2}\bigrbr{\norm{\op D_{1}N}_{\M}+\norm{\op D_{1}M}_{\M}}\norm u_{1_{2},2,\Omega;\tau}\lesssim\abs{\tau}^{-1}\norm f_{2,\Omega}
\]
and
\[
\begin{aligned}\norm{\op D_{1}\op D_{2}\rbr{\tau}U}_{2,\Omega} & \le\abs{\Omega_{1}}^{-1/2}\bigrbr{\norm{\op D_{1}\op D_{2}N}_{\M}+\norm{\op D_{1}\op D_{2}M}_{\M}}\abs{\tau}\norm u_{1_{2},2,\Omega;\tau}\\
 & \quad+\abs{\Omega_{1}}^{-1/2}\bigrbr{\norm{\op D_{1}N}_{\M}+\norm{\op D_{1}M}_{\M}}\norm{\op D_{2}\rbr{\tau}u}_{1_{2},2,\Omega;\tau}\lesssim\norm f_{2,\Omega},
\end{aligned}
\]
as~required.\qedspace
\end{proof}
We remark that, since $\op P_{1}\op K_{\mu}\rbr{\tau}=0$ (by~the~definitions
of $N$ and~$M$), we may use Poincar\'{e}'s inequality to see that
$\op K_{\mu}\rbr{\tau}$ and~$\op D_{2}\rbr{\tau}\op K_{\mu}\rbr{\tau}$
satisfy estimates similar to those for $\op D_{1}\op K_{\mu}\rbr{\tau}$
and~$\op D_{1}\op D_{2}\rbr{\tau}\op K_{\mu}\rbr{\tau}$, respectively.
This means that, unlike the case of $\op A_{\mu}\rbr{\tau}$ and~$\op A_{\mu}^{0}\rbr{\tau}$,
where both $\op D_{1}\rbr{\tau}$ and~$\op D_{2}\rbr{\tau}$ make
the norms of the corresponding compositions smaller, roughly speaking,
by multiplying each of these norms by $\abs{\tau}$, the differentiation~$\op D_{1}$
will not change the order of the norm of~$\op K_{\mu}\rbr{\tau}$.
The~reason, of course, is that the corrector~$\op K_{\mu}^{\varepsilon}$
involves functions that rapidly oscillate in the first variable.

In~the~same fashion as above, we may prove that
\begin{equation}
\rbr{\op G\op S^{\varepsilon}\otimes\op I}\op K_{\mu}^{\varepsilon}\rbr{\op G\op S^{\varepsilon}\otimes\op I}^{-1}=\int_{\Omega_{1}^{*}}^{\oplus}\varepsilon\op K_{\mu}\rbr{\tau}\,dk.\label{eq: Direct integral for K=001D4B}
\end{equation}
Theorem~\ref{thm: Approximation with 1st corrector} now takes the
following form:
\begin{thm}
\label{thm: Approximation on =0003A9 with 1st corrector}Let~$\mu\notin\set S$.
Then for all $\tau\in\set T$ it holds that 
\[
\norm{\op D_{1}\rbr{\tau}\rbr{\op A_{\mu}\rbr{\tau}}^{-1}-\op D_{1}\rbr{\tau}\rbr{\op A_{\mu}^{0}\rbr{\tau}}^{-1}-\op D_{1}\rbr{\tau}\op K_{\mu}\rbr{\tau}}_{\B\rbr{L_{2}\rbr{\Omega}}^{d}}\lesssim1,
\]
where the constant depends only on $r_{\Lambda}$, $\mu$ and the
multiplier norms of the coefficients.
\end{thm}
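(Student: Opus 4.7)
My plan is as follows. For $f \in L_{2}\rbr{\Omega}$, set $u = \rbr{\op A_{\mu}\rbr{\tau}}^{-1} f$ and $w = \rbr{\op A_{\mu}^{0}\rbr{\tau}}^{-1} f$. Since the coefficients of $\op A_{\mu}^{0}\rbr{\tau}$ are independent of $x_{1}$, the operator commutes with $\op P_{1}$, and $w = w_{0} + w_{1}$ with $w_{0} = \rbr{\op A_{\mu}^{0}\rbr{\tau}}^{-1}\op P_{1} f$ constant in $x_{1}$ and $w_{1} = \rbr{\op A_{\mu}^{0}\rbr{\tau}}^{-1}\rbr{I - \op P_{1}} f$ having zero mean in $x_{1}$. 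The ansatz for $u$ is $\tilde u = w_{0} + \rbr{N\op D\rbr{\tau} + \varepsilon M} w_{0} = w_{0} + \op K_{\mu}\rbr{\tau} f$, so that $u - w - \op K_{\mu}\rbr{\tau} f = \rbr{u - \tilde u} - w_{1}$. It thus suffices to bound $\norm{\op D_{1}\rbr{\tau}\rbr{u - \tilde u}}_{2,\Omega}$ and $\norm{\op D_{1}\rbr{\tau} w_{1}}_{2,\Omega}$ by $\norm{f}_{2,\Omega}$ uniformly in~$\tau$. For the second, $\rbr{I - \op P_{1}} f$ has a Fourier expansion in $x_{1}$ supported on $\Lambda^{*} \setminus \cbr 0$, where $\abs{\lambda^{*} + k} \gtrsim r_{\Lambda}$ for $k \in \Omega_{1}^{*}$; a mode-by-mode application of Lemma~\ref{lem: Estimates for A=002070(=0003C4)} then gives the claim.

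The heart of the proof is a resolvent identity for $u - \tilde u$. Expanding $\op A_{\mu}\rbr{\tau} \tilde u$ via the product rule (bearing in mind the decomposition $\op D\rbr{\tau} = \op D_{1} + \vect k + \varepsilon \op D_{2}$) and using the weak form~(\ref{eq: The auxiliary problems on =0003A9=002081}) of the equations~(\ref{def: N}) and~(\ref{def: M}) for $N$ and $M$ together with the definitions~(\ref{def: A=002070})\hairspace--\hairspace(\ref{def: q=002070}) of the effective coefficients, I would establish an identity of the form
\[
\op A_{\mu}\rbr{\tau}\rbr{u - \tilde u} = \rbr{I - \op P_{1}} f + \varepsilon\,\op D\rbr{\tau}^{*} g_{1} + \varepsilon g_{0}
\]
in~$\widetilde H^{1}\rbr{\Omega}^{*}$, where $g_{1}, g_{0} \in L_{2}\rbr{\Omega}$ are expressions in the multipliers $\op D_{2} N$, $\op D_{2} M$, $\op D_{2} A$, $\op D_{2} a_{n}$, $\op D_{2} q$ (controlled by Lemma~\ref{lem: The auxiliary problem solution as multiplicators}) and the functions $w_{0}$, $\op D\rbr{\tau} w_{0}$, $\op D\rbr{\tau}\op D\rbr{\tau} w_{0}$ (controlled by Lemma~\ref{lem: Estimates for A=002070(=0003C4)}), with $\norm{g_{1}}_{2,\Omega} \lesssim \abs{\tau}\norm{f}_{2,\Omega}$ and $\norm{g_{0}}_{2,\Omega} \lesssim \abs{\tau}^{2}\norm{f}_{2,\Omega}$, so that the negative $\abs{\tau}$-powers from the bounds on $w_{0}$ and its derivatives are exactly balanced by the explicit $\varepsilon$-factor. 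Applying $\op D_{1}\rbr{\tau}\rbr{\op A_{\mu}\rbr{\tau}}^{-1}$ to both sides and using Lemma~\ref{lem: Estimates for A(=0003C4)}---its third estimate for the term $\varepsilon \op D\rbr{\tau}^{*} g_{1}$ and its second for $\varepsilon g_{0}$---together with a duality argument invoking the Poincar\'e-type inequality~(\ref{est: Poincar=0000E9's inequality}) in $x_{1}$ (via $\norm{\rbr{I - \op P_{1}}\phi}_{2,\Omega} \le r_{\Lambda}^{-1}\norm{\op D_{1}\rbr{\tau}\phi}_{2,\Omega}$) to handle the $\rbr{I - \op P_{1}} f$ piece, one arrives at $\norm{\op D_{1}\rbr{\tau}\rbr{u - \tilde u}}_{2,\Omega} \lesssim \norm{f}_{2,\Omega}$.

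The main obstacle I anticipate is the algebraic step that produces the resolvent identity above. One must carefully track the $\op D_{1}$-- and $\op D_{2}$--contributions when expanding $\op A_{\mu}\rbr{\tau} \tilde u$; rearrange the terms so that those involving $\op D_{1}^{*} A\rbr{\op D_{1} N + I}$ and $\op D_{1}^{*}\rbr{A \op D_{1} M + a_{2}}$ vanish by the defining equations of $N$ and $M$; recognize the surviving $\Omega_{1}$-averages of the principal-part contributions as exactly $A^{0}$, $a_{1}^{0}$, $a_{2}^{0}$ and $q^{0}$ (in particular, the nonstandard contribution $a_{1}^{*} \op D_{1} M$ to $q^{0}$ is essential for the cancellations); and verify that every residual term indeed fits into $\varepsilon \op D\rbr{\tau}^{*} g_{1} + \varepsilon g_{0}$ with the claimed bounds. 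Once this identity is in place, the rest of the argument is routine.
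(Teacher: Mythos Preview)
Your overall plan---compute $\op A_\mu(\tau)\tilde u$, subtract, invert---is sound, and the decomposition $u-w-\op K_\mu(\tau)f=(u-\tilde u)-w_1$ together with your treatment of $w_1$ and of the $(I-\op P_1)f$ piece is fine. The gap is in the claimed form of the remainder. If you actually carry out the expansion (equivalently, if you rewrite the paper's identity~(\ref{eq: Identity for U(=0003C4)}) plus the $(I-\op P_1)f$ term), you find
\[
\op A_\mu(\tau)(u-\tilde u)=\op P_1^\bot f-\op P_1^\bot\bigl[\op S(\tau)w_0+\op T(\tau)U\bigr]-\bigl[\op S(\tau)+(\op T(\tau)^+)^*-\varepsilon^2\mu\bigr]U,
\]
with $U=\op K_\mu(\tau)f$. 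The term $\op P_1^\bot[\op S(\tau)w_0+\op T(\tau)U]$ does \emph{not} reduce to $\varepsilon\,\op D(\tau)^*g_1+\varepsilon g_0$ with $g_0,g_1\in L_2$: after the cell equations kill $\op D_1^*A(\op D_1N+I)$, you are left with $(\vect k+\op D_2(\tau))^*\op P_1^\bot\bigl[A(\op D_1N+I)\op D(\tau)w_0+\cdots\bigr]$, whose $\vect k^*$--part carries no $\varepsilon$ at all, and whose $\op D_2(\tau)^*$--part is not an $L_2$ function. Your list ``$\op D_2N$, $\op D_2M$, $\op D_2A,\ldots$'' misses precisely the $\vect k$--contributions coming from $\op D_1(\tau)=\op D_1+\vect k$.

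If you try to push your scheme through anyway---apply $\op D_1(\tau)(\op A_\mu(\tau))^{-1}$ and argue by duality with $v^+=(\op A_\mu(\tau)^+)^{-1}\op D_1(\tau)^*g$---then the naive bound from~(\ref{est: S(=0003C4)}) gives a factor $\norm{w_0}_{1_2,2,\Omega;\tau}\lesssim\abs\tau^{-1}\norm f$ times $\norm{\op P_1^\bot v^+}_{1_2,2,\Omega;\tau}\lesssim\norm g$, hence only $\abs\tau^{-1}\norm f\norm g$, which does not close. To save a factor of $\abs\tau$ one would need a third-order resolvent bound such as $\norm{\op D_1(\tau)\op D_2(\tau)(\op A_\mu(\tau)^+)^{-1}\op D_1(\tau)^*}\lesssim1$, which is not among the estimates of Lemma~\ref{lem: Estimates for A(=0003C4)}. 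The paper circumvents this with the auxiliary operator $\op E(\tau)$ (defined just after~(\ref{eq: (U(=0003C4)f,D=002082(=0003C4)*g)})): writing $\op P_1^\bot v^+=(\vect l+\op D_2(\tau))^*\op E(\tau)\op P_1^\bot v^+$ and commuting $(\vect l+\op D_2(\tau))$ through $\op S(\tau)$ and $\op T(\tau)$ transfers a derivative from $v^+$ back onto $w_0$ and $U$, turning the bad factor $\norm{w_0}_{1_2,2,\Omega;\tau}$ into $\norm{\op D_2(\tau)w_0}_{1_2,2,\Omega;\tau}+\abs\tau\norm{w_0}_{1_2,2,\Omega;\tau}\lesssim\norm f$; see~(\ref{est: (S(=0003C4)P=002081u+T(=0003C4)U,(I-P=002081)v=00207A), improved}). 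Some device of this kind is unavoidable here, and your proposal does not supply it.
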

Let $\op S\rbr{\tau}\colon\widetilde H^{1}\rbr{\Omega}\to\widetilde H^{1}\rbr{\Omega}^{*}$
and~$\op T\rbr{\tau}\colon\widetilde H^{1}\rbr{\Omega}\to\widetilde H^{1}\rbr{\Omega}^{*}$
be given by 
\begin{align}
\op S\rbr{\tau} & =\bigrbr{\rbr{\vect k+\op D_{2}\rbr{\tau}}^{*}A+\varepsilon a_{1}^{*}}\rbr{\vect k+\op D_{2}\rbr{\tau}}+\varepsilon\rbr{\vect k+\op D_{2}\rbr{\tau}}^{*}a_{2}+\varepsilon^{2}q,\label{def: S(=0003C4)}\\
\op T\rbr{\tau} & =\bigrbr{\rbr{\vect k+\op D_{2}\rbr{\tau}}^{*}A+\varepsilon a_{1}^{*}}\op D_{1}.\label{def: T(=0003C4)}
\end{align}
Clearly, $\op S\rbr{\tau}$ and~$\op T\rbr{\tau}$ are bounded operators,
satisfying estimates   like that for~$\op A_{\mu}\rbr{\tau}$.
It~is in fact possible to improve these estimates by using (\ref{est: a=002099 on =0003A9 with =0003B5})
and~(\ref{est: q on =0003A9 with =0003B5}) instead of (\ref{est: a=002099 on =0003A9 without =0003B5})
and~(\ref{est: q on =0003A9 without =0003B5}):
\begin{align}
\abs{\rbr{\op S\rbr{\tau}u,v}_{2,\Omega}} & \lesssim\bigrbr{\varepsilon\norm{\op D_{1}\rbr{\tau}u}_{2,\Omega}+\norm u_{1_{2},2,\Omega;\tau}}\bigrbr{\varepsilon\norm{\op D_{1}\rbr{\tau}v}_{2,\Omega}+\norm v_{1_{2},2,\Omega;\tau}},\label{est: S(=0003C4)}\\
\abs{\rbr{\op T\rbr{\tau}u,v}_{2,\Omega}} & \lesssim\norm{\op D_{1}u}_{2,\Omega}\bigrbr{\varepsilon\norm{\op D_{1}\rbr{\tau}v}_{2,\Omega}+\norm v_{1_{2},2,\Omega;\tau}}\label{est: T(=0003C4)}
\end{align}
if~$u,v\in\widetilde H^{1}\rbr{\Omega}$. The~operators~$\op S\rbr{\tau}^{+}$
and~$\op T\rbr{\tau}^{+}$ are defined likewise. Of~course, estimates
similar to (\ref{est: S(=0003C4)}) and~(\ref{est: T(=0003C4)})
hold for $\op S\rbr{\tau}^{+}$ and~$\op T\rbr{\tau}^{+}$ as~well.
Notice that
\begin{equation}
\op A_{\mu}\rbr{\tau}=\op D_{1}^{*}A\op D_{1}+\op S\rbr{\tau}+\op T\rbr{\tau}+\rbr{\op T\rbr{\tau}^{+}}^{*}-\varepsilon^{2}\mu.\label{eq: Expression for A(=0003C4) in terms of S(=0003C4) and T(=0003C4)}
\end{equation}

We break  Theorem~\ref{thm: Approximation with 2d corrector}
into two parts. The~first is formulated as follows:
\begin{lem}
For~any $\mu\notin\set S$ and~$\varepsilon\in\set E$ we have
\[
\norm{\op L_{\mu}\rbr{\op I-\op P^{\varepsilon}}}_{\B\rbr{L_{2}\rbr{\Xi}}}\lesssim\varepsilon,
\]
where the constant depends on $r_{\Lambda}$, $\mu$ and the multiplier
norms of the coefficients.\end{lem}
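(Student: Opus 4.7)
The plan is to pass to the fiber decomposition via the Fourier transform in~$x_1$. Since both $\op L_\mu$ and $\op P^\varepsilon$ are pseudodifferential operators in~$x_1$\hairspace---\hairspace with operator-valued symbol $\op L_\mu(k)\in\B(L_2(\Omega_2))$ and scalar symbol $\chi_{\varepsilon^{-1}\Omega_1^*}(k)$, respectively\hairspace---\hairspace the composition $\op L_\mu(\op I-\op P^\varepsilon)$ has symbol $\op L_\mu(k)\bigl(1-\chi_{\varepsilon^{-1}\Omega_1^*}(k)\bigr)$, and Plancherel gives
\[
\bignorm{\op L_\mu\rbr{\op I-\op P^\varepsilon}}_{\B(L_2(\Xi))}=\esssup_{k\notin\varepsilon^{-1}\Omega_1^*}\bignorm{\op L_\mu(k)}_{\B(L_2(\Omega_2))}.
\]
Since $\Omega_1^*$ contains the ball of radius~$r_\Lambda$, every $k\notin\varepsilon^{-1}\Omega_1^*$ satisfies $\abs k\ge r_\Lambda\varepsilon^{-1}$. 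Hence the lemma reduces to the uniform pointwise bound $\bignorm{\op L_\mu(k)}_{\B(L_2(\Omega_2))}\lesssim(1+\abs k)^{-1}$.

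To prove this pointwise estimate, I would first establish analogs of Lemma~\ref{lem: Estimates for A=002070(=0003C4)} for the $x_1$-Fourier symbol $\op A_\mu^0(k)$ of the effective operator, namely $\bignorm{(\op A_\mu^0(k))^{-1}}_{\B(L_2(\Omega_2))}\lesssim(1+\abs k)^{-2}$ and $\bignorm{(\vect k+\op D_2)(\op A_\mu^0(k))^{-1}}_{\B(L_2(\Omega_2))}\lesssim(1+\abs k)^{-1}$, together with the adjoint versions. These follow from the coercivity of the associated quadratic form combined with the identity $\norm{(\vect k+\op D_2)u}_{2,\Omega_2}^2=\abs k^2\norm u_{2,\Omega_2}^2+\norm{\op D_2 u}_{2,\Omega_2}^2$. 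Using the multiplier properties of $N$, $M$, $N^+$, $M^+$, one then obtains (as in Lemma~\ref{lem: Estimates for K(=0003C4)}) that $\bignorm{\op K_\mu(k)}_{\B(L_2(\Omega_2),L_2(\Omega))}\lesssim(1+\abs k)^{-1}$, and likewise for $\op K_\mu^+(k)$.

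With these ingredients I would estimate $(\op L_\mu(k)g,f)_{2,\Omega_2}$ by writing it, via the defining formula for $\op L_\mu(k)$, as the sesquilinear pairing
\[
\abs{\Omega_1}^{-1}\int_{\Omega_1}\bigl\langle\op S(k;y_1)u+\op T(k;y_1)\op K_\mu(k;y_1)g,\;\op K_\mu^+(k;y_1)f\bigr\rangle\,dy_1,
\]
with $u=(\op A_\mu^0(k))^{-1}g$. Adapting the estimates~(\ref{est: S(=0003C4)}) and~(\ref{est: T(=0003C4)}) to the $\Omega_2$-setting\hairspace---\hairspace where the role of $\op D_1$ is taken over by multiplication by $\vect k$ and the $y_1$-derivatives of $N$, $M$ are controlled because $N,M\in\S$\hairspace---\hairspace each pairing is controlled by suitable weighted $H^1$-type seminorms of $u$ and of $\op K_\mu^+(k;y_1)f$ involving $(\vect k+\op D_2)$ and the $L_2(\Omega_2)$-norm; the bounds from the previous step then combine to give the advertised factor $(1+\abs k)^{-1}$.

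The main obstacle is the order accounting in this pairing: a crude factor-by-factor estimate is too weak, since $\op S(k;y_1)$ is second-order and $\op T(k;y_1)$ is of the product of one derivative in~$y_1$ with a first-order operator in~$x_2$, while $\op K_\mu(k)$ and $\op K_\mu^+(k)$ each bring only one order of gain. The missing factor of $(1+\abs k)^{-1}$ must be extracted by carefully choosing which side of the pairing absorbs which derivative. A cleaner alternative is to invoke the formula $\op L_\mu=(\op A_\mu^0)^{-1}\op M(\op A_\mu^0)^{-1}$ noted in the remark after~(\ref{def: C=001D4B}): at the symbol level this reads $\op L_\mu(k)=(\op A_\mu^0(k))^{-1}\op M(k)(\op A_\mu^0(k))^{-1}$, where $\op M(k)$ is third-order in $(\vect k,\op D_2)$ with coefficients depending only on~$x_2$, and the degree count $-2+3-2=-1$ delivers the required decay directly.
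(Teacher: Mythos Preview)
Your proposal is correct and follows essentially the same route as the paper: reduce to the pointwise symbol bound $\norm{\op L_\mu(k)}_{\B(L_2(\Omega_2))}\lesssim\abs{k}^{-1}$ and use the fiber estimates for $\op A_\mu^0(k)$ and $\op K_\mu(k)$. The paper makes this step particularly short by setting $\tau=(k,1)$ and writing $(\op L_\mu(k)f,f)_{2,\Omega_2}=\abs{\Omega_1}^{-1}(\op S(\tau)u+\op T(\tau)U,U^+)_{2,\Omega}$, so that the already-proved estimates~(\ref{est: S(=0003C4)}), (\ref{est: T(=0003C4)}) and Lemmas~\ref{lem: Estimates for A=002070(=0003C4)},~\ref{lem: Estimates for K(=0003C4)} apply verbatim (once one observes that the restriction $k\in\Omega_1^*$ is not needed when the input lies in $\op P_1 L_2(\Omega)$).

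Your worry about order counting is overstated. A straight application of~(\ref{est: S(=0003C4)}) and~(\ref{est: T(=0003C4)}) with $\varepsilon=1$ already gives a first factor of size $\abs{\tau}^{-1}\norm f$ (coming from $\norm{u}_{1_2,2,\Omega;\tau}$ and $\norm{\op D_1 U}$, both $\lesssim\abs{\tau}^{-1}\norm f$ by the lemmas) and a second factor of size $\norm f$ (coming from $U^+$; here one uses Poincar\'e, since $\op P_1 U^+=0$, together with the second estimate in Lemma~\ref{lem: Estimates for K(=0003C4)}). No subtle rearrangement of derivatives is needed. Your alternative via $\op L_\mu=(\op A_\mu^0)^{-1}\op M(\op A_\mu^0)^{-1}$ with the $-2+3-2=-1$ count also works, but requires checking separately that the $x_2$-dependent coefficients of $\op M$ are in the right multiplier spaces.
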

\begin{proof}
We estimate the operator norm of the symbol~$\op L_{\mu}\rbr{\wc}$.
Fix~$k\in\R^{d_{1}}\setminus\cbr 0$. Let $f\in L_{2}\rbr{\Omega_{2}}$,
and let $u=\rbr{\op A_{\mu}^{0}\rbr{\tau}}^{-1}f$ and~$U=\op K_{\mu}\rbr{\tau}f$,
$U^{+}=\op K_{\mu}\rbr{\tau}^{+}f$ where~$\tau=\rbr{k,1}$. Then
\[
\rbr{\op L_{\mu}\rbr kf,f}_{2,\Omega_{2}}=\abs{\Omega_{1}}^{-1}\bigrbr{\op S\rbr{\tau}u+\op T\rbr{\tau}U,U^{+}}_{2,\Omega}.
\]
It~should be noted that while Lemmas~\ref{lem: Estimates for A=002070(=0003C4)}
and~\ref{lem: Estimates for K(=0003C4)} are only asserted to be
valid for $\op A_{\mu}^{0}\rbr{\tau}$ and~$\op K_{\mu}\rbr{\tau}$
with~$\tau\in\Omega_{1}^{*}\times\set E$, they may be extended to
$\op A_{\mu}^{0}\rbr{\tau}\op P_{1}$ and~$\op K_{\mu}\rbr{\tau}$
with~$\tau\in\R^{d_{1}}\times\set E$. Indeed, the condition~$k\in\Omega_{1}^{*}$
is used only  to ensure the inequality~(\ref{est: D=002081(=0003C4) is invertible}).
But when $u$ does not depend on~$x_{1}$, we have equality for
each~$k\in\R^{d_{1}}$. Thus, the estimates~(\ref{est: S(=0003C4)})
and~(\ref{est: T(=0003C4)}) together with these extended versions
of Lemmas~\ref{lem: Estimates for A=002070(=0003C4)} and~\ref{lem: Estimates for K(=0003C4)},
as well as Poincar\'{e}'s inequality, give
\[
\abs{\rbr{\op L_{\mu}\rbr kf,f}_{2,\Omega_{2}}}\lesssim\bigrbr{\norm u_{1_{2},2,\Omega;\tau}+\norm{\op D_{1}U}_{2,\Omega}}\norm{\op D_{1}U^{+}}_{1_{2},2,\Omega;\tau}\lesssim\abs k^{-1}\norm f_{2,\Omega_{2}}^{2}.
\]
Now if $g\in C_{0}^{\infty}\rbr{\Xi}$ and $\hat{g}=\rbr{\op F\otimes\op I}g$,
then
\[
\begin{aligned}\abs{\rbr{\op L_{\mu}\rbr{\op I-\op P^{\varepsilon}}g,g}_{2,\Xi}} & \le\int_{\R^{d_{1}}\setminus\varepsilon^{-1}\Omega_{1}^{*}}\bigabs{\bigrbr{\op L_{\mu}\rbr k\hat{g}\rbr{k,\wc},\hat{g}\rbr{k,\wc}}_{2,\Omega_{2}}}\,dk\\
 & \lesssim\int_{\R^{d_{1}}\setminus\varepsilon^{-1}\Omega_{1}^{*}}\abs k^{-1}\norm{\hat{g}\rbr{k,\wc}}_{2,\Omega}^{2}\,dk\le\varepsilon r_{\Lambda}^{-1}\norm g_{2,\Xi}^{2}.
\end{aligned}
\]
This is the result that we wished to prove.\qedspace
\end{proof}
The~lemma takes care of  $\op L_{\mu}\rbr{\op I-\op P^{\varepsilon}}$
and~$\op L_{\mu}^{+}\rbr{\op I-\op P^{\varepsilon}}$ in the estimate~(\ref{est: Approximation with 2d corrector}),
so we may concentrate our attention on $\op L_{\mu}\op P^{\varepsilon}$
and~$\op L_{\mu}^{+}\op P^{\varepsilon}$. Let $\op A_{\mu}^{0}\rbr{\tau}^{+}$
and~$\op K_{\mu}\rbr{\tau}^{+}$ play the roles of $\op A_{\mu}^{0}\rbr{\tau}$
and~$\op K_{\mu}\rbr{\tau}$ for~$\op A_{\mu}\rbr{\tau}^{+}$.
Then, since
\[
\bigrbr{\rbr{\op G\op S^{\varepsilon}\otimes\op I}\op P^{\varepsilon}u}\rbr{k,x}=\abs{\Omega_{1}}^{-1/2}\bigrbr{\rbr{\op S^{1/\varepsilon}\op F\otimes\op I}u}\rbr{k,x_{2}}
\]
for~every~$u\in L_{2}\rbr{\Xi}$, we find that
\begin{eqnarray}
\rbr{\op G\op S^{\varepsilon}\otimes\op I}\op L_{\mu}\op P^{\varepsilon}\rbr{\op G\op S^{\varepsilon}\otimes\op I}^{-1} & = & \int_{\Omega_{1}^{*}}^{\oplus}\varepsilon\op L_{\mu}\rbr{\tau}\op P_{1}\,dk,\label{eq: Direct integral for L}
\end{eqnarray}
where~$\op L_{\mu}\rbr{\tau}\colon L_{2}\rbr{\Omega}\to L_{2}\rbr{\Omega}$
is given by
\begin{equation}
\op L_{\mu}\rbr{\tau}=\bigrbr{\op K_{\mu}\rbr{\tau}^{+}}^{*}\bigrbr{\op S\rbr{\tau}\rbr{\op A_{\mu}^{0}\rbr{\tau}}^{-1}+\op T\rbr{\tau}\op K_{\mu}\rbr{\tau}}.\label{def: L(=0003C4)}
\end{equation}
Define~$\op L_{\mu}\rbr{\tau}^{+}$ similarly. Obviously, in order
to prove Theorem~\ref{thm: Approximation with 2d corrector}, we
need to establish the following result:
\begin{thm}
\label{thm: Approximation on =0003A9 with 2d corrector}Let~$\mu\notin\set S$.
Then for all $\tau\in\set T$ it holds that 
\[
\bignorm{\rbr{\op A_{\mu}\rbr{\tau}}^{-1}-\rbr{\op A_{\mu}^{0}\rbr{\tau}}^{-1}-\bigrbr{\op K_{\mu}\rbr{\tau}-\op L_{\mu}\rbr{\tau}}\op P_{1}-\op P_{1}\bigrbr{\op K_{\mu}\rbr{\tau}^{+}-\op L_{\mu}\rbr{\tau}^{+}}^{*}}_{\B\rbr{L_{2}\rbr{\Omega}}}\lesssim1,
\]
where the constant depends only on $r_{\Lambda}$, $\mu$ and the
multiplier norms of the coefficients.
\end{thm}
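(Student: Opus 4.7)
My plan is to reduce the operator-norm bound to a sesquilinear estimate. Fix $f,g\in L_2\rbr{\Omega}$ and set
\[
u=\rbr{\op A_\mu\rbr{\tau}}^{-1}f,\quad u_0=\rbr{\op A_\mu^0\rbr{\tau}}^{-1}f,\quad v=\rbr{\op A_\mu\rbr{\tau}^+}^{-1}g,\quad v_0=\rbr{\op A_\mu^0\rbr{\tau}^+}^{-1}g,
\]
together with the correctors $U=\op K_\mu\rbr{\tau}f$ and $W=\op K_\mu\rbr{\tau}^+g$. Since $\op P_1$ is self-adjoint and $\op K_\mu\rbr{\tau}\op P_1=\op K_\mu\rbr{\tau}$ (similarly on the adjoint side), a short expansion of the pairing of $V\rbr{\tau}f$ with $g$ (where $V\rbr{\tau}$ denotes the operator inside the norm in the theorem) shows that the estimate is equivalent to
\[
\bigabs{\bigrbr{u-u_0-U,g}_{2,\Omega}-\bigrbr{f,W}_{2,\Omega}+\bigrbr{\op L_\mu\rbr{\tau}\op P_1 f,g}_{2,\Omega}+\bigrbr{f,\op L_\mu\rbr{\tau}^+\op P_1 g}_{2,\Omega}}\lesssim\norm f_{2,\Omega}\norm g_{2,\Omega},
\]
uniformly in~$\tau\in\set T$.

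The algebraic heart of the argument is a two-sided resolvent-type identity. Starting from $\bigrbr{u-u_0-U,g}_{2,\Omega}=\form a_\mu\rbr{\tau}\sbr{u-u_0-U,v}$ and $\bigrbr{f,W}_{2,\Omega}=\form a_\mu\rbr{\tau}\sbr{u,W}$, one applies the decomposition~(\ref{eq: Expression for A(=0003C4) in terms of S(=0003C4) and T(=0003C4)}) and uses the defining identities~(\ref{def: N}) and~(\ref{def: M}) in the form~(\ref{eq: The auxiliary problems on =0003A9=002081}) to eliminate the principal $\op D_1^*A\op D_1$-contribution from $u_0+U$. The residual is then expressible entirely through $\op S\rbr{\tau}\rbr{\op A_\mu^0\rbr{\tau}}^{-1}\op P_1 f$ and $\op T\rbr{\tau}U$ paired with $v$ or $W$. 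Splitting $v=v_0+W+\rbr{v-v_0-W}$ and performing the mirror manipulation on the adjoint side isolates the part of the residual that refuses to decay like $\abs\tau^{-2}$ as
\[
\bigrbr{\op S\rbr{\tau}\rbr{\op A_\mu^0\rbr{\tau}}^{-1}\op P_1 f+\op T\rbr{\tau}U,W}_{2,\Omega},
\]
which by the definition~(\ref{def: L(=0003C4)}) of~$\op L_\mu\rbr{\tau}$ equals $\bigrbr{\op L_\mu\rbr{\tau}\op P_1 f,g}_{2,\Omega}$. Subtracting this together with its adjoint twin is exactly the cancellation encoded in the statement.

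The residual terms left to estimate fall into two classes. The first consists of pairings in which either $u-u_0-U$ or $v-v_0-W$ appears in a position sensitive to $\op S\rbr{\tau}$ or~$\op T\rbr{\tau}$; combining the first-order approximation (Theorem~\ref{thm: Approximation on =0003A9 with 1st corrector} and its adjoint analogue) with the sharp sesquilinear bounds~(\ref{est: S(=0003C4)}) and~(\ref{est: T(=0003C4)}) and with Lemmas~\ref{lem: Estimates for A(=0003C4)},~\ref{lem: Estimates for A=002070(=0003C4)} and~\ref{lem: Estimates for K(=0003C4)}, they contribute at most~$O\rbr{1}$. The second class contains cross-terms in which $u_0$ or $v_0$ has been replaced by $u_0-\op P_1 u_0$ or $v_0-\op P_1 v_0$; here Poincar\'{e}'s inequality~(\ref{est: Poincar=0000E9's inequality}) in the $x_1$-variable supplies the extra factor of~$\abs\tau^{-1}$ needed to close the estimate.

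The main obstacle is bookkeeping. The matching of the non-decaying residual with the fiber symbol of~$\op L_\mu\rbr{\tau}$ is exact---this is precisely how $\op L_\mu\rbr{\tau}$ was defined---but producing it cleanly requires careful tracking of where $\op P_1$ must be inserted, of how the adjoint identity for $v-v_0-W$ mirrors the direct identity for $u-u_0-U$, and of how the derivatives $\op D_j\rbr{\tau}$ commute with the multipliers $N$ and~$M$ through the commutator identities in the spirit of Lemma~\ref{lem: The auxiliary problem solution as multiplicators}. Once the algebraic identity is in place, the residual estimates are a systematic application of the fiber lemmas already established.
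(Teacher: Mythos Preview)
Your overall strategy—sesquilinear expansion, a two-sided resolvent identity, and cancelling the non-decaying piece against $\op L_\mu\rbr{\tau}$—coincides with the paper's. The gap is in the estimation of the residual, which you treat as routine bookkeeping.

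After the cancellation you describe, one of the surviving terms is (in the paper's notation)
\[
\bigrbr{\op S\rbr{\tau}\op P_1 u_0+\op T\rbr{\tau}U,\,\op P_1^\bot\rbr{v-W}}_{2,\Omega}.
\]
Applying~(\ref{est: S(=0003C4)}) and~(\ref{est: T(=0003C4)}) directly, together with Poincar\'e and Lemmas~\ref{lem: Estimates for A(=0003C4)}--\ref{lem: Estimates for K(=0003C4)}, gives only $O\rbr{\abs{\tau}^{-1}}$, not $O\rbr{1}$: the factor coming from $\op P_1 u_0$ and $U$ is of size $\abs{\tau}^{-1}$, while the factor coming from $\op P_1^\bot\rbr{v-W}$ is at best $O\rbr{1}$, even after every available use of Poincar\'e and of Theorems~\ref{thm: Convergence on =0003A9} and~\ref{thm: Approximation on =0003A9 with 1st corrector} on the adjoint side. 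Note also that Poincar\'e in the $x_1$-variable converts $\norm{\op P_1^\bot\rbr{\cdot}}$ into $r_\Lambda^{-1}\norm{\op D_1\rbr{\tau}\rbr{\cdot}}$ with a \emph{constant} prefactor; it does not supply the ``extra factor of~$\abs{\tau}^{-1}$'' you invoke for your second class. Since $\abs{\tau}$ can be arbitrarily small in~$\set T$, an $O\rbr{\abs{\tau}^{-1}}$ bound does not yield the theorem.

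The paper closes this gap with the improved estimate~(\ref{est: (S(=0003C4)P=002081u+T(=0003C4)U,(I-P=002081)v=00207A), improved}), established in the proof of Theorem~\ref{thm: Convergence on =0003A9} by means of the auxiliary lifting operator~$\op E\rbr{\tau}$: one writes $\op P_1^\bot w=\rbr{\vect l+\op D_2\rbr{\tau}}^*\op E\rbr{\tau}\op P_1^\bot w$ for a fixed $\vect l$ with $\abs{\vect l}=\abs{\tau}$, commutes $\rbr{\vect l+\op D_2\rbr{\tau}}$ through $\op S\rbr{\tau}$ and $\op T\rbr{\tau}$, and then invokes~(\ref{est: E(=0003C4)}). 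The effect is to trade the problematic factor $\norm{\op P_1^\bot w}_{1_2,2,\Omega;\tau}$ for $\norm{\op D_1\rbr{\tau}w}$ while simultaneously gaining a factor $\abs{\tau}$ on the $u_0$-side; the resulting right-hand side is then controlled via~(\ref{est: V(=0003C4), first expression, additional}) using Theorem~\ref{thm: Approximation on =0003A9 with 1st corrector} for the adjoint and a second application of Poincar\'e to reach $\norm{\op D_1\rbr{\tau}\op D_1\rbr{\tau}v_0}$. This lifting step is the analytic heart of the argument and is missing from your proposal.
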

We now turn to the proofs. Our first goal is to verify the identity
\begin{equation}
\begin{aligned}\hspace{2em} & \hspace{-2em}\rbr{\op A_{\mu}\rbr{\tau}}^{-1}\op P_{1}-\rbr{\op A_{\mu}^{0}\rbr{\tau}}^{-1}\op P_{1}-\op K_{\mu}\rbr{\tau}\\
 & =-\rbr{\op A_{\mu}\rbr{\tau}}^{-1}\op P_{1}^{\bot}\bigrbr{\op S\rbr{\tau}\rbr{\op A_{\mu}^{0}\rbr{\tau}}^{-1}\op P_{1}+\op T\rbr{\tau}\op K_{\mu}\rbr{\tau}}\\
 & \quad-\rbr{\op A_{\mu}\rbr{\tau}}^{-1}\bigrbr{\op S\rbr{\tau}^{+}+\op T\rbr{\tau}^{+}-\varepsilon^{2}\bar{\mu}}^{*}\op K_{\mu}\rbr{\tau}.
\end{aligned}
\label{eq: Identity for U(=0003C4)}
\end{equation}
Denote the operator on the left by~$\op U_{\mu}\rbr{\tau}$. If~$f,g\in L_{2}\rbr{\Omega}$,
then we set $u=\rbr{\op A_{\mu}^{0}\rbr{\tau}}^{-1}f$, $U=\op K_{\mu}\rbr{\tau}f$
and~$v^{+}=\rbr{\op A_{\mu}\rbr{\tau}^{+}}^{-1}g$. By~(\ref{eq: Relation for A(=0003C4) and A(=0003C4)=00207A}),
we have
\[
\rbr{\op U_{\mu}\rbr{\tau}f,g}_{2,\Omega}=\form a_{\mu}^{0}\rbr{\tau}\sbr{\op P_{1}u,v^{+}}-\form a_{\mu}\rbr{\tau}\sbr{\op P_{1}u+U,v^{+}}
\]
(and here we are using the fact that $\op P_{1}$ commutes with~$\op D\rbr{\tau}$
on periodic functions). Looking at the definitions of the effective
coefficients, we see that
\begin{equation}
\op A_{\mu}^{0}\rbr{\tau}\op P_{1}=\op P_{1}\bigrbr{\op A_{\mu}\rbr{\tau}+\op T\rbr{\tau}\rbr{N\op D\rbr{\tau}+\varepsilon M}}\op P_{1},\label{eq: Identity for A=002070(=0003C4)P=002081}
\end{equation}
from~which we obtain
\begin{equation}
\form a_{\mu}^{0}\rbr{\tau}\sbr{\op P_{1}u,v^{+}}-\form a_{\mu}\rbr{\tau}\sbr{\op P_{1}u,v^{+}}=\rbr{\op T\rbr{\tau}U,\op P_{1}v^{+}}_{2,\Omega}-\rbr{\op A_{\mu}\rbr{\tau}\op P_{1}u,\op P_{1}^{\bot}v^{+}}_{2,\Omega}.\label{eq: a=002070(=0003C4)=00005BP=002081u,v=00207A=00005D-a(=0003C4)=00005BP=002081u,v=00207A=00005D}
\end{equation}
On~the~other hand, it follows from~(\ref{eq: Expression for A(=0003C4) in terms of S(=0003C4) and T(=0003C4)})
that
\begin{equation}
\form a_{\mu}\rbr{\tau}\sbr{U,v^{+}}=\rbr{\op T\rbr{\tau}U,\op P_{1}v^{+}}_{2,\Omega}+\rbr{\op S\rbr{\tau}U,\op P_{1}v^{+}}_{2,\Omega}+\rbr{\op A_{\mu}\rbr{\tau}U,\op P_{1}^{\bot}v^{+}}_{2,\Omega}.\label{eq: a(=0003C4)=00005BU,v=00207A=00005D}
\end{equation}
The~first term on the right-hand side of this last equality cancels
with the first term on the right-hand side of~(\ref{eq: a=002070(=0003C4)=00005BP=002081u,v=00207A=00005D-a(=0003C4)=00005BP=002081u,v=00207A=00005D}),
so
\[
\rbr{\op U_{\mu}\rbr{\tau}f,g}_{2,\Omega}=-\rbr{\op A_{\mu}\rbr{\tau}\rbr{\op P_{1}u+U},\op P_{1}^{\bot}v^{+}}_{2,\Omega}-\rbr{\op S\rbr{\tau}U,\op P_{1}v^{+}}_{2,\Omega}.
\]
Notice that, by the definitions of $N$ and~$M$,
\[
\op D_{1}^{*}A\op D_{1}U+\rbr{\op T\rbr{\tau}^{+}}^{*}\op P_{1}u=0.
\]
 Since $\op T\rbr{\tau}\op P_{1}=0$ and~$\op P_{1}\op K_{\mu}\rbr{\tau}=0$,
this and the identity~(\ref{eq: Expression for A(=0003C4) in terms of S(=0003C4) and T(=0003C4)})
imply that
\[
\rbr{\op U_{\mu}\rbr{\tau}f,g}_{2,\Omega}=-\bigrbr{\op S\rbr{\tau}\op P_{1}u+\op T\rbr{\tau}U,\op P_{1}^{\bot}v^{+}}_{2,\Omega}-\bigrbr{\op S\rbr{\tau}U+\rbr{\op T\rbr{\tau}^{+}}^{*}U-\varepsilon^{2}\mu U,v^{+}}_{2,\Omega}.
\]
Then, using (\ref{eq: Relation for A(=0003C4) and A(=0003C4)=00207A})
and the fact that $\op S\rbr{\tau}^{+}$ is the formal adjoint of~$\op S\rbr{\tau}$,
we get~(\ref{eq: Identity for U(=0003C4)}).

Another important identity is
\begin{equation}
\begin{aligned}\hspace{1.795em} & \hspace{-1.795em}\rbr{\op A_{\mu}\rbr{\tau}}^{-1}\op P_{1}-\rbr{\op A_{\mu}^{0}\rbr{\tau}}^{-1}\op P_{1}-\op K_{\mu}\rbr{\tau}+\op L_{\mu}\rbr{\tau}\op P_{1}+\op P_{1}\bigrbr{\op L_{\mu}\rbr{\tau}^{+}}^{*}\\
 & =-\bigrbr{\rbr{\op A_{\mu}\rbr{\tau}^{+}}^{-1}-\op K_{\mu}\rbr{\tau}^{+}}^{*}\op P_{1}^{\bot}\bigrbr{\op S\rbr{\tau}\rbr{\op A_{\mu}^{0}\rbr{\tau}}^{-1}\op P_{1}+\op T\rbr{\tau}\op K_{\mu}\rbr{\tau}}\\
 & \quad-\bigrbr{\rbr{\op A_{\mu}\rbr{\tau}^{+}}^{-1}-\rbr{\op A_{\mu}^{0}\rbr{\tau}^{+}}^{-1}\op P_{1}-\op K_{\mu}\rbr{\tau}^{+}}^{*}\bigrbr{\op S\rbr{\tau}^{+}+\op T\rbr{\tau}^{+}}^{*}\op K_{\mu}\rbr{\tau}\\
 & \quad-\bigrbr{\op S\rbr{\tau}^{+}\op K_{\mu}\rbr{\tau}^{+}-\varepsilon^{2}\bar{\mu}\rbr{\op A_{\mu}\rbr{\tau}^{+}}^{-1}}^{*}\op K_{\mu}\rbr{\tau}.
\end{aligned}
\label{eq: Identity for V(=0003C4)}
\end{equation}
To~prove this, we just note that $\op T\rbr{\tau}^{+}\op P_{1}=0$
and~$\op P_{1}\op K_{\mu}\rbr{\tau}=0$ and then apply~(\ref{eq: Identity for U(=0003C4)}).
We denote the operator on the left by~$\op V_{\mu}\rbr{\tau}$.

With~these results in hand, it is easy to complete the proofs of
the theorems.
\begin{proof}[Proof of Theorem~\textup{\ref{thm: Convergence on =0003A9}}]
We write
\[
\rbr{\op A_{\mu}\rbr{\tau}}^{-1}-\rbr{\op A_{\mu}^{0}\rbr{\tau}}^{-1}=\op U_{\mu}\rbr{\tau}+\rbr{\op A_{\mu}\rbr{\tau}}^{-1}\op P_{1}^{\bot}-\rbr{\op A_{\mu}^{0}\rbr{\tau}}^{-1}\op P_{1}^{\bot}+\op K_{\mu}\rbr{\tau}.
\]
By~Poincar\'{e}'s inequality~(\ref{est: Poincar=0000E9's inequality})
and Lemmas~\ref{lem: Estimates for A(=0003C4)} and~\ref{lem: Estimates for A=002070(=0003C4)}
(for~$\op A_{\mu}\rbr{\tau}^{+}$ and~$\op A_{\mu}^{0}\rbr{\tau}^{+}$,
respectively), the norms of $\abs{\tau}\rbr{\op A_{\mu}\rbr{\tau}}^{-1}\op P_{1}^{\bot}$,
$\op D_{2}\rbr{\tau}\rbr{\op A_{\mu}\rbr{\tau}}^{-1}\op P_{1}^{\bot}$
as well as $\abs{\tau}\rbr{\op A_{\mu}^{0}\rbr{\tau}}^{-1}\op P_{1}^{\bot}$,
$\op D_{2}\rbr{\tau}\rbr{\op A_{\mu}^{0}\rbr{\tau}}^{-1}\op P_{1}^{\bot}$
are uniformly bounded. In~Lemma~\ref{lem: Estimates for K(=0003C4)},
we proved that so are the norms of~$\abs{\tau}\op K_{\mu}\rbr{\tau}$
and~$\op D_{2}\rbr{\tau}\op K_{\mu}\rbr{\tau}$. Thus, it is  enough
to show that
\begin{align}
\norm{\op U_{\mu}\rbr{\tau}}_{\B\rbr{L_{2}\rbr{\Omega}}} & \lesssim\abs{\tau}^{-1},\label{est: U(=0003C4)}\\
\norm{\op D_{2}\rbr{\tau}\op U_{\mu}\rbr{\tau}}_{\B\rbr{L_{2}\rbr{\Omega}}} & \lesssim1.\label{est: D=002082(=0003C4)U(=0003C4)}
\end{align}
Let notation be as above. We use (\ref{est: S(=0003C4)}) and~(\ref{est: T(=0003C4)})
together with the Poincar\'{e} inequality~(\ref{est: Poincar=0000E9's inequality})
to estimate each term in~(\ref{eq: Identity for U(=0003C4)}). The~result
is that 
\begin{equation}
\begin{aligned}\abs{\rbr{\op U_{\mu}\rbr{\tau}f,g}_{2,\Omega}} & \lesssim\abs{\tau}^{-1}\bigrbr{\abs{\tau}\norm u_{1_{2},2,\Omega;\tau}+\norm{\op D_{1}U}_{1_{2},2,\Omega;\tau}}\\
 & \quad\quad\times\bigrbr{\norm{\op D_{1}\rbr{\tau}v^{+}}_{1_{2},2,\Omega;\tau}+\abs{\tau}\norm{v^{+}}_{1_{2},2,\Omega;\tau}}.
\end{aligned}
\label{est: (U(=0003C4)f,g)}
\end{equation}
Combining this with Lemmas~\ref{lem: Estimates for A(=0003C4)} (for~$\op A_{\mu}\rbr{\tau}^{+}$),
\ref{lem: Estimates for A=002070(=0003C4)} and~\ref{lem: Estimates for K(=0003C4)}
gives~(\ref{est: U(=0003C4)}).

The~inequality~(\ref{est: D=002082(=0003C4)U(=0003C4)}) is proved
in a like manner.\spacefactor2682{} We set $w^{+}=\rbr{\op A_{\mu}\rbr{\tau}^{+}}^{-1}\op D_{2}\rbr{\tau}^{*}g$
with $g\in L_{2}\rbr{\Omega}^{d}$ such that $\op D_{2}\rbr{\tau}^{*}g\in L_{2}\rbr{\Omega}$
and then estimate the form
\begin{equation}
\begin{aligned}\rbr{\op U_{\mu}\rbr{\tau}f,\op D_{2}\rbr{\tau}^{*}g}_{2,\Omega} & =-\bigrbr{\op S\rbr{\tau}\op P_{1}u+\op T\rbr{\tau}U,\op P_{1}^{\bot}w^{+}}_{2,\Omega}\\
 & \quad-\bigrbr{U,\rbr{\op S\rbr{\tau}^{+}+\op T\rbr{\tau}^{+}-\varepsilon^{2}\bar{\mu}}w^{+}}_{2,\Omega}.
\end{aligned}
\label{eq: (U(=0003C4)f,D=002082(=0003C4)*g)}
\end{equation}
However, a modification is required to eliminate the mixed second
derivatives of $w^{+}$ which arise when we estimate the right-hand
side (cf.~(\ref{est: (U(=0003C4)f,g)}), where a similar term, namely
$\op D_{2}\rbr{\tau}\op D_{1}\rbr{\tau}v^{+}$, causes no difficulty).

We do so as follows. Let $\varphi\in L_{2}\rbr{\Omega_{2};\widetilde H^{1}\rbr{\Omega_{1}}}$,
and let $\psi$ be the solution of
\[
\op D_{2}^{*}\rbr{\tau}\op D_{2}\rbr{\tau}\psi+\abs{\tau}^{2}\psi=\varphi
\]
in~$L_{2}\rbr{\Omega_{1};H^{1}\rbr{\Omega_{2}}}$. Obviously, $\psi$
has first derivatives and mixed second derivatives, as well as pure
second derivatives in~$x_{2}$. Fix $\vect l\in\R^{d_{1}}\oplus\cbr 0$
with $\abs{\vect l}=\abs{\tau}$ and define the operator~$\op E\rbr{\tau}\colon L_{2}\rbr{\Omega_{2};\widetilde H^{1}\rbr{\Omega_{1}}}\to\widetilde H^{1}\rbr{\Omega}^{d}$
that assigns to each $\varphi\in L_{2}\rbr{\Omega_{2};\widetilde H^{1}\rbr{\Omega_{1}}}$
the function~$\rbr{\vect l+\op D_{2}\rbr{\tau}}\psi$. It~follows
that $\rbr{\vect l+\op D_{2}\rbr{\tau}}^{*}\op E\rbr{\tau}$ is the
identity mapping. A~straightforward calculation (using the fact
that $\norm{\op E\rbr{\tau}\varphi}_{2,\Omega}=\norm{\psi}_{1_{2},2,\Omega;\tau}$)
shows that $\op E\rbr{\tau}$ is bounded and
\begin{equation}
\abs{\tau}\norm{\op D_{1}\rbr{\tau}\op E\rbr{\tau}\varphi}_{2,\Omega}+\norm{\op E\rbr{\tau}\varphi}_{1_{2},2,\Omega;\tau}\le\norm{\op D_{1}\rbr{\tau}\varphi}_{2,\Omega}+3\norm{\varphi}_{2,\Omega}.\label{est: E(=0003C4)}
\end{equation}

Now, we may rewrite the first expression on the right-hand side of~(\ref{eq: (U(=0003C4)f,D=002082(=0003C4)*g)})
as 
\[
\begin{aligned}\hspace{2em} & \hspace{-2em}\bigrbr{\op S\rbr{\tau}\op P_{1}u+\op T\rbr{\tau}U,\op P_{1}^{\bot}w^{+}}_{2,\Omega}\\
 & =\varepsilon\bigrbr{\sbr{\op D_{2},\op S\rbr{\tau}}\op P_{1}u+\sbr{\op D_{2},\op T\rbr{\tau}}U,\op E\rbr{\tau}\op P_{1}^{\bot}w^{+}}_{2,\Omega}\\
 & \quad+\bigrbr{\op S\rbr{\tau}\rbr{\vect l+\op D_{2}\rbr{\tau}}\op P_{1}u+\op T\rbr{\tau}\rbr{\vect l+\op D_{2}\rbr{\tau}}U,\op E\rbr{\tau}\op P_{1}^{\bot}w^{+}}_{2,\Omega}.
\end{aligned}
\]
Applying~(\ref{est: S(=0003C4)}) and~(\ref{est: T(=0003C4)}) and
similar results for the commutators of $\op D_{2}$ with~$\op S\rbr{\tau}$
and~$\op T\rbr{\tau}$ (notice that these commutators have the same
forms as $\op S\rbr{\tau}$ and~$\op T\rbr{\tau}$), we conclude
that
\[
\begin{aligned}\hspace{2em} & \hspace{-2em}\bigabs{\bigrbr{\op S\rbr{\tau}\op P_{1}u+\op T\rbr{\tau}U,\op P_{1}^{\bot}w^{+}}_{2,\Omega}}\\
 & \lesssim\bigrbr{\norm{\op D_{2}\rbr{\tau}u}_{1_{2},2,\Omega;\tau}+\abs{\tau}\norm u_{1_{2},2,\Omega;\tau}+\norm{\op D_{1}U}_{1_{2},2,\Omega;\tau}}\\
 & \quad\quad\times\bigrbr{\abs{\tau}\norm{\op D_{1}\rbr{\tau}\op E\rbr{\tau}\op P_{1}^{\bot}w^{+}}_{2,\Omega}+\norm{\op E\rbr{\tau}\op P_{1}^{\bot}w^{+}}_{1_{2},2,\Omega;\tau}}.
\end{aligned}
\]
Therefore, by the estimate~(\ref{est: E(=0003C4)}) together with
the Poincar\'{e} inequality,
\begin{equation}
\begin{aligned}\hspace{2em} & \hspace{-2em}\bigabs{\bigrbr{\op S\rbr{\tau}\op P_{1}u+\op T\rbr{\tau}U,\op P_{1}^{\bot}w^{+}}_{2,\Omega}}\\
 & \lesssim\bigrbr{\norm{\op D_{2}\rbr{\tau}u}_{1_{2},2,\Omega;\tau}+\abs{\tau}\norm u_{1_{2},2,\Omega;\tau}+\norm{\op D_{1}U}_{1_{2},2,\Omega;\tau}}\norm{\op D_{1}\rbr{\tau}w^{+}}_{2,\Omega}.
\end{aligned}
\label{est: (S(=0003C4)P=002081u+T(=0003C4)U,(I-P=002081)v=00207A), improved}
\end{equation}
The~second expression is handled in the same way as before. The~upshot
is that
\begin{equation}
\begin{aligned}\hspace{2em} & \hspace{-2em}\abs{\rbr{\op U_{\mu}\rbr{\tau}f,\op D_{2}\rbr{\tau}^{*}g}_{2,\Omega}}\\
 & \lesssim\bigrbr{\norm{\op D_{2}\rbr{\tau}u}_{1_{2},2,\Omega;\tau}+\abs{\tau}\norm u_{1_{2},2,\Omega;\tau}+\norm{\op D_{1}U}_{1_{2},2,\Omega;\tau}}\norm{w^{+}}_{1,2,\Omega;\tau},
\end{aligned}
\label{est: (U(=0003C4)f,D=002082(=0003C4)*g)}
\end{equation}
whence (\ref{est: D=002082(=0003C4)U(=0003C4)}) follows by Lemmas~\ref{lem: Estimates for A(=0003C4)}
(for~$\op A_{\mu}\rbr{\tau}^{+}$), \ref{lem: Estimates for A=002070(=0003C4)}
and~\ref{lem: Estimates for K(=0003C4)}.\qedspace
\end{proof}

\begin{proof}[Proof of Theorem~\textup{\ref{thm: Approximation on =0003A9 with 1st corrector}}]
The~proof follows the same pattern as the previous one. Again,
the assertion is reduced, by Poincar\'{e}'s inequality and Lemmas~\ref{lem: Estimates for A(=0003C4)}
and~\ref{lem: Estimates for A=002070(=0003C4)}, to the estimation
of~$\op D_{1}\rbr{\tau}\op U_{\mu}\rbr{\tau}$. Then the arguments
that we used to obtain (\ref{est: (U(=0003C4)f,D=002082(=0003C4)*g)})
go through without change to yield the desired conclusion.\qedspace
\end{proof}

\begin{proof}[Proof of Theorem~\textup{\ref{thm: Approximation on =0003A9 with 2d corrector}}]
We write
\[
\begin{aligned}\hspace{2em} & \hspace{-2em}\rbr{\op A_{\mu}\rbr{\tau}}^{-1}-\rbr{\op A_{\mu}^{0}\rbr{\tau}}^{-1}-\bigrbr{\op K_{\mu}\rbr{\tau}-\op L_{\mu}\rbr{\tau}}\op P_{1}-\op P_{1}\bigrbr{\op K_{\mu}\rbr{\tau}^{+}-\op L_{\mu}\rbr{\tau}^{+}}^{*}\\
 & =\op V_{\mu}\rbr{\tau}+\bigrbr{\op P_{1}^{\bot}\rbr{\op A_{\mu}\rbr{\tau}^{+}}^{-1}-\op P_{1}^{\bot}\rbr{\op A_{\mu}^{0}\rbr{\tau}^{+}}^{-1}-\op K_{\mu}\rbr{\tau}^{+}}^{*}
\end{aligned}
\]
(recall that $\op V_{\mu}\rbr{\tau}$ is the operator on the left
side of~(\ref{eq: Identity for V(=0003C4)})). Since Theorem~\ref{thm: Approximation on =0003A9 with 1st corrector}
holds true for $\op A_{\mu}\rbr{\tau}^{+}$, it follows that
\[
\norm{\op P_{1}^{\bot}\rbr{\op A_{\mu}\rbr{\tau}^{+}}^{-1}-\op P_{1}^{\bot}\rbr{\op A_{\mu}^{0}\rbr{\tau}^{+}}^{-1}-\op K_{\mu}\rbr{\tau}^{+}}_{\B\rbr{L_{2}\rbr{\Omega}}}\lesssim1,
\]
where we have used Poincar\'{e}'s inequality and the fact that~$\op P_{1}\op K_{\mu}\rbr{\tau}^{+}=0$.
Thus, we are left with estimating the operator~$\op V_{\mu}\rbr{\tau}$.

Fix~$f,g\in L_{2}\rbr{\Omega}$.  Let $u$, $U$ and~$v^{+}$
be as above, and let $u^{+}=\rbr{\op A_{\mu}^{0}\rbr{\tau}^{+}}^{-1}g$
and~$U^{+}=\op K_{\mu}\rbr{\tau}^{+}g$. Then, by~(\ref{eq: Identity for V(=0003C4)}),
\[
\begin{aligned}\rbr{\op V_{\mu}\rbr{\tau}f,g}_{2,\Omega} & =-\bigrbr{\op S\rbr{\tau}\op P_{1}u+\op T\rbr{\tau}U,\op P_{1}^{\bot}\rbr{v^{+}-U^{+}}}_{2,\Omega}\\
 & \quad-\bigrbr{U,\rbr{\op S\rbr{\tau}^{+}+\op T\rbr{\tau}^{+}}\rbr{v^{+}-\op P_{1}u^{+}-U^{+}}}_{2,\Omega}\\
 & \quad-\bigrbr{U,\op S\rbr{\tau}^{+}U^{+}-\varepsilon^{2}\bar{\mu}v^{+}}_{2,\Omega}.
\end{aligned}
\]
We use (\ref{est: (S(=0003C4)P=002081u+T(=0003C4)U,(I-P=002081)v=00207A), improved}),
with $v^{+}-U^{+}$ in place of~$w^{+}$, to estimate, dropping
the constants, the first expression on the right-hand side~by
\begin{equation}
\bigrbr{\norm{\op D_{2}\rbr{\tau}u}_{1_{2},2,\Omega;\tau}+\abs{\tau}\norm u_{1_{2},2,\Omega;\tau}+\norm{\op D_{1}U}_{1_{2},2,\Omega;\tau}}\norm{\op D_{1}\rbr{\tau}\op P_{1}^{\bot}\rbr{v^{+}-U^{+}}}_{2,\Omega}.\label{est: V(=0003C4), first expression}
\end{equation}
The~remaining terms,  according to estimates similar to (\ref{est: S(=0003C4)})
and~(\ref{est: T(=0003C4)}) as well as Poincar\'{e}'s inequality,
do not exceed
\begin{equation}
\norm{\op D_{1}U}_{1_{2},2,\Omega;\tau}\bigrbr{\norm{v^{+}-\op P_{1}u^{+}-U^{+}}_{1,2,\Omega;\tau}+\norm{\op D_{1}U^{+}}_{1_{2},2,\Omega;\tau}+\abs{\tau}\norm{\op D_{1}\rbr{\tau}v^{+}}_{2,\Omega}}\label{est: V(=0003C4), remaining terms}
\end{equation}
(notice here that~$\op P_{1}\op K_{\mu}\rbr{\tau}=0$). Further,
by Poincar\'{e}'s inequality,
\begin{equation}
\norm{\op D_{1}\rbr{\tau}\op P_{1}^{\bot}\rbr{v^{+}-U^{+}}}_{2,\Omega}\lesssim\norm{\op D_{1}\rbr{\tau}\rbr{v^{+}-u^{+}-U^{+}}}_{2,\Omega}+\norm{\op D_{1}\rbr{\tau}\op D_{1}\rbr{\tau}u^{+}}_{2,\Omega}\label{est: V(=0003C4), first expression, additional}
\end{equation}
and
\begin{equation}
\begin{aligned}\norm{v^{+}-\op P_{1}u^{+}-U^{+}}_{1,2,\Omega;\tau} & \lesssim\norm{\op D_{1}\rbr{\tau}\rbr{v^{+}-u^{+}-U^{+}}}_{2,\Omega}+\norm{v^{+}-u^{+}}_{1_{2},2,\Omega;\tau}\\
 & \quad+\norm{\op D_{1}\rbr{\tau}u^{+}}_{1,2,\Omega;\tau}+\norm{\op D_{1}U^{+}}_{1_{2},2,\Omega;\tau}.
\end{aligned}
\label{est: V(=0003C4), remaining terms, additional}
\end{equation}
If~we combine (\ref{est: V(=0003C4), first expression}) with (\ref{est: V(=0003C4), first expression, additional})
and~(\ref{est: V(=0003C4), remaining terms}) with (\ref{est: V(=0003C4), remaining terms, additional})
and apply Lemmas~\ref{lem: Estimates for A=002070(=0003C4)} and~\ref{lem: Estimates for K(=0003C4)}
(for~$\op K_{\mu}\rbr{\tau}$ and~$\op K_{\mu}\rbr{\tau}^{+}$)
and Theorems~\ref{thm: Convergence on =0003A9} and~\ref{thm: Approximation on =0003A9 with 1st corrector}
(for~$\op A_{\mu}\rbr{\tau}^{+}$), then we obtain
\[
\abs{\rbr{\op V_{\mu}\rbr{\tau}f,g}_{2,\Omega}}\lesssim\norm f_{2,\Omega}\norm g_{2,\Omega}.
\]
This proves the theorem.\qedspace
\end{proof}

\subsection*{Acknowledgment}
The~author is grateful to \textsc{T.\thinglue A.~Suslina}
for useful conversations about the problem.
He also would like to thank \textsc{V.\thinglue P.~Smyshlyaev} for his interest
in this work.

\bibliographystyle{alpha}
\bibliography{bibliography}

\end{document}